\documentclass[11pt]{amsart}

\usepackage[T1]{fontenc}
\usepackage[dvipsnames]{xcolor}
\usepackage[latin1]{inputenc}
\usepackage{orcidlink}
\usepackage[english]{babel}
\usepackage{mathabx}
\usepackage{amsmath,amssymb,amsthm}
\usepackage{etoolbox}
\usepackage{enumitem}
\usepackage{hyperref}

\newtheorem{theorem}{Theorem}[section]
\newtheorem{lemma}[theorem]{Lemma}
\newtheorem{proposition}[theorem]{Proposition}
\newtheorem{corollary}[theorem]{Corollary}

\theoremstyle{definition}
\newtheorem{definition}[theorem]{Definition}

\newtheorem{remark}[theorem]{Remark}
\newtheorem{example}[theorem]{Example}
\newtheorem*{remark*}{Remark}

\newcommand{\Mult}{\operatorname{Mult}}
\renewcommand{\Re}{\operatorname{Re}}
\newcommand{\ccap}{\operatorname{cap}}
\newcommand{\bnd}{{\partial \mathbb{B}_d}}
\DeclareMathOperator{\supp}{supp}
\newcommand{\PHDA}{h^2_d}
\newcommand{\Kor}{Kor\'anyi}

\renewcommand{\MR}[1]{}

\title[Potential theory in the Drury--Arveson space]{Potential theory and boundary behavior in the Drury--Arveson space}
\author[N. Chalmoukis]{Nikolaos Chalmoukis \orcidlink{0000-0001-5210-8206}}
\address{Dipartimento di Matematica e Applicazioni, Universit\`a degli
Studi di Milano Bicocca, Via R. Cozzi 55,  20125, Milano, Italy}
\email{nikolaos.chalmoukis@unimib.it}

\author[M. Hartz]{Michael Hartz \orcidlink{0000-0001-6509-9062}}
\address{Fachrichtung Mathematik, Universit\"at des Saarlandes, 66123 Saarbr\"ucken, Germany}
\email{hartz@math.uni-sb.de}
\thanks{The first author is a member of the INdAM group GNAMPA. Most of this work was carried out when the first author was an Alexander von Humboldt postdoc fellow at Saarland University. The second author was partially supported by the Emmy Noether Program of the German Research Foundation (DFG Grant 466012782)}
\date{\today}

\subjclass[2020]{Primary 46E22; Secondary 31B15, 32U20}
\keywords{Drury--Arveson space, potential theory, capacity, Kor\'anyi limit, totally null, cyclic function}

\begin{document}

\begin{abstract}
    We develop a notion of capacity for the Drury--Arveson space $H^2_d$ of holomorphic
    functions on the Euclidean unit ball. We show
    that every function in $H^2_d$ has a non-tangential limit (in fact Kor\'anyi limit) at every point
    in the sphere outside of a set of capacity zero.
    Moreover, we prove that the capacity zero condition is sharp,
    and that it is equivalent to being totally null for $H^2_d$.
    We also provide applications to cyclicity.
    Finally, we discuss generalizations of these results to other function
    spaces on the ball.
\end{abstract}

\maketitle

\section{Introduction}

Let $\mathbb{D}$ denote the open unit disc in the complex plane,
let $\mathcal{O}(\mathbb{D})$ be the algebra of holomorphic functions on $\mathbb{D}$ and let
\begin{equation*}
  H^2(\mathbb{D}) = \Big\{ f \in \mathcal{O}(\mathbb{D}): \sup_{0 \le r < 1} \int_{0}^{2 \pi} |f(r e^{it})|^2 \frac{dt}{2 \pi} < \infty \Big\}
\end{equation*}
be the Hardy space.
A famous theorem of Fatou \cite[Theorem 1.2]{D70} shows that for every function $f \in H^2(\mathbb{D})$, there exists
a Borel set $E \subset \partial \mathbb{D}$ of linear Lebesgue measure $0$ such that
$f$ has a finite non-tangential limit (and in particular radial limit) at each point $\zeta \in \partial \mathbb{D} \setminus E$.
The condition of being Lebesgue null is optimal in the sense that for every compact Lebesgue null
set $E \subset  \partial \mathbb{D}$, there exists $f \in H^2(\mathbb{D})$ with
$\lim_{r \nearrow 1} |f(r \zeta)| = \infty$ for all $\zeta \in E$.

Let $\mathbb{B}_d \subset \mathbb{C}^d$ denote the open Euclidean unit ball.
One way of generalizing $H^2(\mathbb{D})$ to the ball is to replace the normalized arclength
measure on $\partial \mathbb{D}$ with the normalized surface measure $\sigma$ on $\partial \mathbb{B}_d$;
this leads to the Hardy space $H^2(\mathbb{B}_d)$ on the ball.
In this setting, \Kor\ \cite{K69} proved that every function in $H^2(\mathbb{B}_d)$
has a \Kor\ limit at $\sigma$-almost every point in $\partial \mathbb{B}_d$.
The precise definition of \Kor\ limit will be recalled in Section \ref{sec:weak_type}; in particular, the existence of the \Kor\ limit
implies the existence of the non-tangential and hence of the radial limit.

It is by now well understood that for many purposes, a more appropriate generalization of $H^2(\mathbb{D})$
to the ball is the Drury--Arveson space $H^2_d$. This is the reproducing kernel Hilbert space on $\mathbb{B}_d$
whose reproducing kernel is
\begin{equation*}
  \frac{1}{1 - \langle z,w \rangle }.
\end{equation*}
For instance, $H^2_d$ plays key roles in multivariable operator theory \cite{Arveson1998} and in the theory of complete Pick spaces \cite{Agler2000}.
See also \cite{Hartz22a,Shalit15} for an introduction to $H^2_d$.
A central goal of the present article is to establish a version of Fatou's theorem for the Drury--Arveson space.

Since $H^2_d$ is contained in $H^2(\mathbb{B}_d)$, \Kor's theorem in particular
applies to functions in $H^2_d$. However, it is easy to see that functions in $H^2_d$ exhibit
regular boundary behavior that is not predicted by \Kor's theorem.
For instance, if $f \in H^2_d$, then for all $\zeta \in \partial \mathbb{B}_d$, the function
\begin{equation*}
  f_\zeta: \mathbb{D} \to \mathbb{C}, \quad z \mapsto f(z \zeta),
\end{equation*}
belongs to $H^2(\mathbb{D})$, so by Fatou's theorem, the radial limit $\lim_{r \nearrow 1} f(r e^{i t} \zeta)$
exists for almost every $t \in [0, 2 \pi]$. On the other hand,
if $d \ge 2$, then the circle $\{e^{ i t} \zeta: t \in [0,2 \pi]\}$ is $\sigma$-null.

Functions in $H^2_d$ enjoy additional smoothness compared to functions in $H^2(\mathbb{B}_d)$.
The situation is somewhat analogous to that of the classical Dirichlet space $\mathcal{D}$,
which is contained in $H^2(\mathbb{D})$. While Fatou's theorem applies in particular
to functions in $\mathcal{D}$, much more can be said.
Beurling \cite{B40} proved that every function in $\mathcal{D}$ has a non-tangential limit outside
of a set of logarithmic capacity zero.
We will recall the definition of logarithmic capacity below; in particular,
logarithmic capacity zero implies Lebesgue null, but not vice versa.
Carleson \cite{Carleson1952} proved that logarithmic capacity zero is optimal in the sense
that for every compact subset $E \subset \partial \mathbb{D}$ of logarithmic capacity zero,
there exists $f \in \mathcal{D}$ with $\lim_{r \nearrow 1} |f(r \zeta)| = \infty$ for all $\zeta \in E$.

Taking a cue from the boundary results in the Dirichlet space, we are led
to developing a potential theory and hence a notion of capacity that is appropriate
for the Drury--Arveson space. For certain Hardy--Sobolev spaces on $\mathbb{B}_d$,
this was accomplished by Ahern--Cohn \cite{AC89} and Cohn--Verbitsky \cite{Cohn1995}, but the Drury--Arveson
space is an edge case in those results, for which only partial information is available.
Indeed, the main obstacle to applying standard potential theory in the Drury--Arveson space is that the real part and the modulus of the reproducing kernel are not comparable. {The difficulties that the absence of this property entails can be better understood in the context of Section \ref{sec:weak_type}. 
(See also \cite[Problem 7]{Arcozzi2010}, \cite{ARS2008, Tchoundja2008},
and Example \ref{exa:real_part_modulus}}).

We develop a potential theory for a class of reproducing kernel Hilbert spaces $\mathcal{H}$ on $\mathbb{B}_d$,
which includes in particular the Hardy space in the disc, the Dirichlet space and the Drury--Arveson space.
Detailed definitions will be given in Section \ref{sec:energy_basics}. Briefly,
if $K$ denotes the reproducing kernel of $\mathcal{H}$, we define the \emph{energy} of a 
Borel probability measure $\mu$ on $\partial \mathbb{B}_d$ to be
\begin{equation*}
  \mathcal{E}_{\mathcal{H}}(\mu) = \sup_{0 \le r < 1} \int_{\partial \mathbb{B}_d} \int_{\partial \mathbb{B}_d} \Re K(r z, rw) d \mu(w) d \mu(z) \in [0,\infty].
\end{equation*}
There is also a functional analytic interpretation of energy,
namely that $\mathcal{E}(\mu)$ is the square of the norm of the densely defined integration
functional $f \mapsto \int_{\partial \mathbb{B}_d} f \,d \mu$
on $\mathcal{H}$.
Having defined the energy, it is a standard procedure to define first
the \emph{$\mathcal H$-capacity} of a compact subset of $E \subset \partial \mathbb{B}_d$ as
\begin{equation*}
  \ccap_{\mathcal{H}}(E) = \sup \Big\{ \frac{1}{\mathcal{E}(\mu)}: \mu \text{ is a probability
  measure supported on } E \Big\},
\end{equation*}
and from there the \emph{inner capacity} and the \emph{outer capacity} $\ccap_{\mathcal{H}}^*(E)$ of an arbitrary subset $E \subset \partial \mathbb{B}_d$;
see Section \ref{sec:energy_basics} for details.
We will also show that the capacity can be equivalently expressed
in terms of a dual minimization problem; see Section \ref{sec:dual_formulation}.
In the Drury--Arveson space, our principal case of interest,
the capacity is a Choquet capacity, which implies in particular
that inner and outer capacity agree for all Borel sets;
see Theorem \ref{thm:Choquet} and the remark following it.

Let us consider some examples. In the Dirichlet space $\mathcal{D}$, we can take
\begin{equation*}
  K_\mathcal{D}(z,w) = \log \Big( \frac{e}{1 - z \overline{w}} \Big),
\end{equation*}
and a simple argument using Fatou's lemma and the dominated convergence theorem shows that in the formula for the energy,
we may take the limit $r \to 1$ inside the integral, hence
\begin{equation*}
  \mathcal{E}_{\mathcal{D}}(\mu) = \int_{\partial \mathbb{D}} \int_{\partial \mathbb{D}}
  \log \Big| \frac{e}{z- w} \Big| \, d \mu(w) d \mu(z).
\end{equation*}
This is (comparable to) the usual logarithmic energy, and so our capacity $\ccap_{\mathcal{D}}$ is (comparable to) the usual logarithmic
capacity \cite[Definition 2.4.1]{EKM+14}; see also Example \ref{exa:basic_capacities}.

On the other hand, in the Hardy space $H^2(\mathbb{D})$, we have $K_{H^2(\mathbb{D})}(z,w) = (1 - z \overline{w})^{-1}$,
and so expanding the kernel in a geometric series, we find that
\begin{equation*}
  \mathcal{E}_{H^2(\mathbb{D})}(\mu)
  = \sup_{0 \le r < 1} \sum_{n=0}^\infty r^{2 n} | \widehat{\mu}(n)|^2
  \approx \sum_{n=-\infty}^\infty |\widehat{\mu}(n)|^2.
\end{equation*}
Here, $\widehat{\mu}(n)$ denotes the $n$-th Fourier coefficient of $\mu$,
and $A \approx B$ means that there exists a constant $C \in (0,\infty)$
with $A \le C B$ and $B \le C A$.
It follows that $\mathcal{E}_{H^2(\mathbb{D})}(\mu) < \infty$ if and only if
  $\mu$ is absolutely continuous
  with respect to Lebesgue measure $m$ and the density $ f = \frac{d \mu}{d m}$ belongs to $L^2$.
  Moreover, $\mathcal{E}(\mu) \approx \|f\|^2_{L^2}$,
  so for a compact set $E \subset \partial \mathbb{D}$,
  \begin{equation*}
    \ccap_{H^2(\mathbb{D})}(E) \approx \sup \Big\{ \frac{1}{\|f\|_{L^2}^2}, \|f\|_{L^1} = 1, f \text{ supported on E} \Big\}.
  \end{equation*}
  The Cauchy--Schwarz inequality shows that if $f$ is supported on $E$,
  then $\|f\|_{L^1} \le m(E)^{1/2} \|f\|_{L^2}$, with equality if $f$ is the characteristic
  function of $E$. Thus, $\ccap_{H^2(\mathbb{D})}(E) \approx m(E)$,
  i.e.\ our capacity is comparable to Lebesgue measure.

  In the examples of Dirichlet space and Hardy space, the capacity $\ccap_{\mathcal{H}}$
  captures the usual notions of size of subset of $\partial \mathbb{D}$ that is adapted to the space $\mathcal{H}$.
  We establish results that show that this principle remains true in more general
  spaces.
  In the case of the Drury--Arveson space, we obtain the strongest results,
  so we focus on this case now.

Our first main result about $H^2_d$ can then be stated as follows.

\begin{theorem}
  \label{thm:boundary_limits_intro}
  For each $f \in H^2_d$, there exists a Borel set $E \subset \partial \mathbb{B}_d$
  of outer $H^2_d$-capacity zero such that $f$ has a \Kor\ limit at every point in $\partial \mathbb{B}_d \setminus E$.
\end{theorem}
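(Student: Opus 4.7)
The plan is to follow the classical Fatou/Beurling strategy, replacing Lebesgue measure by the $H^2_d$-capacity: first establish a weak-type capacitary bound for a \Kor\ maximal function, then conclude by polynomial approximation.

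For $\alpha > 1$, let $\Gamma_\alpha(\zeta) \subset \mathbb{B}_d$ denote the \Kor\ approach region at $\zeta \in \bnd$, and let $M_\alpha f(\zeta) = \sup_{z \in \Gamma_\alpha(\zeta)} |f(z)|$. The heart of the argument, to be supplied by Section~\ref{sec:weak_type}, is a weak-type estimate of the form
\[
\ccap^*_{H^2_d}\bigl(\{\zeta \in \bnd: M_\alpha f(\zeta) > \lambda\}\bigr) \le C_\alpha \,\frac{\|f\|_{H^2_d}^2}{\lambda^2}
\]
for every $f \in H^2_d$ and $\lambda > 0$. The natural route is to fix a probability measure $\mu$ concentrated on the superlevel set and to exploit the functional-analytic description $\mathcal{E}_{H^2_d}(\mu)^{1/2} = \|T_\mu\|$ noted in the introduction, combined with a pointwise domination of $|f(z)|$ on $\Gamma_\alpha(\zeta)$ by a suitable integral involving the reproducing kernel. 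I expect this to be the main obstacle: in $H^2_d$ the real part $\Re K$ (which defines the energy) is \emph{not} comparable to $|K|$ on $\overline{\mathbb{B}_d}$, so the usual Riesz-potential machinery does not transfer directly from \cite{AC89,Cohn1995}, and one must work with $\Re K$ honestly, supplemented by covering arguments on non-isotropic \Kor\ balls.

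Granted the weak-type bound, the remainder is standard. Using density of polynomials in $H^2_d$, pick $p_n$ with $\|f - p_n\|_{H^2_d}^2 \le 16^{-n}$, and set
\[
E_n^\alpha = \{\zeta \in \bnd: M_\alpha(f - p_n)(\zeta) > 2^{-n}\}.
\]
The weak-type bound yields $\ccap^*(E_n^\alpha) \le C_\alpha\, 4^{-n}$, and countable subadditivity of the outer capacity (which follows directly from its definition) shows that $E^\alpha := \bigcap_{N \ge 1}\bigcup_{n \ge N} E_n^\alpha$ has outer $H^2_d$-capacity zero. For $\zeta \notin E^\alpha$ there exists $N$ with $M_\alpha(f - p_n)(\zeta) \le 2^{-n}$ for all $n \ge N$; since each polynomial $p_n$ extends continuously to $\overline{\mathbb{B}_d}$, a routine $3\varepsilon$-argument produces the $\alpha$-non-tangential limit of $f$ at $\zeta$.

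To pass from $\alpha$-non-tangential limits to a full \Kor\ limit, apply the previous step along a sequence $\alpha_k \nearrow \infty$ and set $E := \bigcup_k E^{\alpha_k}$, a set of outer capacity zero by countable subadditivity. Because the regions $\Gamma_{\alpha_k}(\zeta)$ are nested and each contains points arbitrarily close to $\zeta$, the limits produced along each $\alpha_k$ must coincide, which is precisely the existence of a \Kor\ limit at every $\zeta \in \bnd \setminus E$. Finally, replacing $E$ by a Borel envelope of the same outer capacity (available from the definition of outer capacity as an infimum over open sets, or directly from the Choquet property recorded in Theorem~\ref{thm:Choquet}) yields the Borel exceptional set demanded by the statement.
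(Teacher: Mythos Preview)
Your proposal is correct and follows essentially the same strategy as the paper: reduce to a capacitary weak-type inequality for the \Kor\ maximal function, then deduce existence of limits by approximation with continuous functions and countable subadditivity of outer capacity. The only cosmetic differences are that the paper passes to the pluriharmonic space $\PHDA$ (whose kernel $k = 2\Re K - 1$ is non-negative and has the same capacity-zero sets as $H^2_d$), and uses dilations $f_r$ together with the oscillation function $\beta_f(\zeta) = \limsup f - \liminf f$ in place of your polynomial Borel--Cantelli argument; both routes are standard once the weak-type bound is in hand. You are also right that the weak-type inequality is the crux and that the failure of $\Re K \approx |K|$ is the obstacle; the paper's resolution is a linearization plus $TT^*$ argument, together with the observation that $k(\psi(\zeta),w) \approx k(\zeta,w)$ holds when $\|\psi(\zeta)\| \ge \|w\|$, which lets one split the double integral in $\|T_\psi\|^2$ into two symmetric halves.
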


This result will be proved in Section \ref{sec:weak_type}.
The key tool is a capacitary weak-type estimate for the maximal function.

Theorem \ref{thm:boundary_limits_intro} is sharp in the following sense.

\begin{theorem}
  \label{thm:unboundedness_intro}
  Let $E \subset \partial \mathbb{B}_d$ be a compact set of $H^2_d$-capacity zero.
  Then there exists $f \in H^2_d$ with $\lim_{r \nearrow 1} |f(r \zeta)| = \infty$ for all $\zeta \in E$.
\end{theorem}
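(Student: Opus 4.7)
The plan is a Beurling--Carleson-style ``divergence at a null set'' construction: build a countable family of $H^2_d$ functions $g_n$ with rapidly decaying norms and uniformly nonnegative real parts, whose sum blows up on $E$.

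\textbf{Reduction step.} It suffices to show that for every $\epsilon > 0$ there exists $g \in H^2_d$ with $\|g\|_{H^2_d} < \epsilon$, $\Re g \ge 0$ on $\mathbb{B}_d$, and $\liminf_{r \nearrow 1} \Re g(r\zeta) \ge 1$ for every $\zeta \in E$. Granting this, choose $g_n$ with $\epsilon = 2^{-n}$ and let $f = \sum_{n \ge 1} g_n$. The series converges absolutely in $H^2_d$, and since each $\Re g_n \ge 0$, for every $\zeta \in E$ and every $N$,
\begin{equation*}
  \liminf_{r\nearrow 1}\Re f(r\zeta) \;\ge\; \liminf_{r\nearrow 1} \sum_{n=1}^{N} \Re g_n(r\zeta) \;\ge\; \sum_{n=1}^{N}\liminf_{r\nearrow 1}\Re g_n(r\zeta) \;\ge\; N,
\end{equation*}
so $|f(r\zeta)| \to \infty$.

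\textbf{Construction of $g$.} I would use the dual formulation of capacity from Section \ref{sec:dual_formulation}: the condition $\ccap_{H^2_d}(E) = 0$ should translate, by a Hahn--Banach-type duality between energy and Riesz representers of integration functionals, into the existence of $H^2_d$ functions of arbitrarily small norm whose real part is bounded below by $1$ on a neighborhood of $E$. A convenient way to keep the real part nonnegative on all of $\mathbb{B}_d$ is to select $g$ as the potential $G_\mu(z) = \int K(z,w)\,d\mu(w)$ of a positive measure $\mu$ on $\overline{\mathbb{B}_d}$: since $1-\langle z,w\rangle$ lies in the open right half-plane for $z \in \mathbb{B}_d$ and $w \in \overline{\mathbb{B}_d}$, so does $K(z,w)$, forcing $\Re G_\mu > 0$. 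Approximating $E$ by a shrinking sequence of open neighborhoods and invoking outer regularity from Theorem \ref{thm:Choquet} should then yield, at each scale, a positive measure with small energy and with $\Re G_\mu$ large near $E$.

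\textbf{The expected main obstacle.} The delicate point is upgrading the dual estimate, which naturally controls $\Re g$ in an average or boundary-trace sense, to the pointwise statement $\liminf_{r\nearrow 1}\Re g(r\zeta) \ge 1$ for \emph{every} $\zeta \in E$ rather than quasi-everywhere. This is precisely the difficulty caused by the non-comparability of $\Re K$ and $|K|$ highlighted in the introduction, and is what makes the Drury--Arveson case more subtle than the Dirichlet or Hardy settings. One route is to use the capacitary weak-type maximal estimate of Section \ref{sec:weak_type} to show that the exceptional set on which the required bound fails has $H^2_d$-capacity zero, and then to iterate the construction along a countable family of thickenings of $E$, absorbing each exceptional set into the next stage of the construction in order to recover the pointwise bound on all of $E$.
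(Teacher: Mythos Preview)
Your reduction step is correct and is exactly the paper's: build $g_n$ with summable norms, $\Re g_n\ge 0$ on $\mathbb B_d$, and $\liminf_{r\nearrow 1}\Re g_n(r\zeta)\ge 1$ for every $\zeta\in E$, then sum.

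Where your proposal goes off track is the ``expected main obstacle'' paragraph. The difficulty you anticipate---upgrading a quasi-everywhere bound on $\Re g$ to a genuine pointwise bound on all of $E$---is real if one works with the full energy $\mathcal E$ and boundary-supported measures, but the paper sidesteps it entirely by a regularization that you are missing. For $r<1$ one introduces the $r$-energy $\mathcal E_r(\mu)=\iint \Re K(rz,rw)\,d\mu(w)\,d\mu(z)$, whose kernel is \emph{continuous} on $\overline{\mathbb B_d}\times\overline{\mathbb B_d}$, and the associated $r$-capacity $\ccap_r$. One checks (Lemma~\ref{lem:c_r}) that $\ccap_r(E)\to\ccap(E)=0$, picks $r_n\nearrow 1$ with $\sum_n\ccap_{r_n}(E)^{1/2}<\infty$, and takes $\mu_n$ an $r_n$-equilibrium measure for $E$. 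The normalized potential $f_n(z)=\ccap_{r_n}(E)\int K(r_n z,r_n w)\,d\mu_n(w)$ is continuous on $\overline{\mathbb B_d}$, satisfies $\|f_n\|^2\le\ccap_{r_n}(E)$, and the elementary variational inequality $(P6)$ of Proposition~\ref{prop:abstract_capacity} applied with $\nu=\delta_\zeta$ gives $\Re f_n(\zeta)\ge 1$ for \emph{every} $\zeta\in E$. No exceptional set appears, and no appeal to the weak-type inequality, to Choquet capacitability, or to any iteration is needed.

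Two further remarks. First, the dual formulation you invoke (Theorem~\ref{thm:dual_cap}) already delivers continuous functions with $\Re f\ge 1$ pointwise on $E$ and arbitrarily small norm; its proof is precisely the regularization above, so citing it would close your argument immediately---your worry that it gives only an ``average or boundary-trace'' bound is unfounded. Second, the non-comparability of $\Re K$ and $|K|$ is irrelevant here: the paper's argument (Theorem~\ref{thm:cap_0_unboundedness}) works verbatim for any unitarily invariant space with $\Re K\ge 0$, whereas your proposed route through the weak-type estimate would tie the result unnecessarily to $H^2_d$.
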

This theorem will be proved in Section \ref{sec:unboundedness}.

Theorems \ref{thm:boundary_limits_intro} and \ref{thm:unboundedness_intro} suggest
that that the notion of capacity zero is the right notion of smallness in the context of boundary behavior
in the Drury--Arveson space.
There is another notion of smallness, called {totally null}, which proved to be very useful
in the context of functional calculi \cite{BHM18,CD16a}, ideals and peak interpolation
in Arveson's algebra $\mathcal A_d$
\cite{Clouatre2016,CD18,DH23}.
The definition is due to Clou\^atre and Davidson \cite[Definition 5.5]{Clouatre2016}; it is motivated by classical work on the ball algebra (see also \cite[Chapter 9]{Rudin08} for a detailed account of the classical theory).

Denoting the multiplier algebra of $H^2_d$ by $\Mult(H^2_d)$, one says that a complex Borel measure $\mu$ on the sphere
is \emph{$\Mult(H^2_d)$-Henkin} if for every sequence $(p_n)$ of polynomials
with $\sup_{n} \|p_n\|_{\Mult(H^2_d)} < \infty$ and $\lim_{n \to \infty} p_n(z) = 0$
for all $z \in \mathbb{B}_d$, we have
\begin{equation*}
  \lim_{n \to \infty} \int_{\partial \mathbb{B}_d} p_n \, d \mu = 0.
\end{equation*}
A Borel subset $E \subset \partial \mathbb{B}_d$ is called \emph{$\Mult(H^2_d)$-totally null} if $|\mu|(E)  = 0$
for all $\Mult(H^2_d)$-Henkin measures $\mu$.

In the case of the Dirichlet space and certain weighted Dirichlet spaces,
it was shown in \cite{CH20} that the appropriate versions of the totally null and the capacity
zero condition are equivalent.
This is also true for our $H^2_d$-capacity.

\begin{theorem}
\label{thm:cap_TN_intro}
  A Borel set $E \subset \partial \mathbb{B}_d$ has (outer) $H^2_d$-capacity zero if and only if it is $\Mult(H^2_d)$-totally null.
\end{theorem}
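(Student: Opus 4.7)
The plan is to prove both directions of the equivalence by combining the Choquet capacitability of $H^2_d$-capacity (Theorem~\ref{thm:Choquet}), the functional-analytic interpretation of energy (finite energy of $\mu$ is equivalent to boundedness of the integration functional $p \mapsto \int p\,d\mu$ on $H^2_d$, already recorded in the introduction), and known structural properties of the $\Mult(H^2_d)$-Henkin class due to Clou\^atre--Davidson.

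Starting with the direction ``not totally null implies positive outer capacity'' (the contrapositive, which is the easier one), assume $\ccap^*_{H^2_d}(E) > 0$. By capacitability, there is a compact $K \subseteq E$ with $\ccap_{H^2_d}(K) > 0$, so there exists a probability measure $\mu$ supported on $K$ with $\mathcal{E}_{H^2_d}(\mu) < \infty$. By the functional-analytic interpretation of energy, the map $p \mapsto \int p\,d\mu$ on polynomials extends to a bounded linear functional on $H^2_d$, hence has the form $\langle\cdot, g_\mu\rangle$ for some $g_\mu \in H^2_d$. Given polynomials $(p_n)$ with $\sup_n \|p_n\|_{\Mult(H^2_d)} < \infty$ and $p_n \to 0$ pointwise in $\mathbb{B}_d$, the identity $p_n = M_{p_n}(1)$ forces $\sup_n \|p_n\|_{H^2_d} < \infty$; combined with $\langle p_n, K_z\rangle = p_n(z) \to 0$ and the density of the span of the reproducing kernels $K_z$ in $H^2_d$, this yields $p_n \to 0$ weakly in $H^2_d$. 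Hence $\int p_n\,d\mu = \langle p_n, g_\mu\rangle \to 0$, so $\mu$ is $\Mult(H^2_d)$-Henkin, and $\mu(K) = 1$ witnesses that $E$ is not totally null.

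For the converse, let $\mu$ be a $\Mult(H^2_d)$-Henkin measure and assume $\ccap^*_{H^2_d}(E) = 0$. Using the Clou\^atre--Davidson fact that the Mult-Henkin class is closed under $\mu \mapsto |\mu|$ and under Borel restriction, I reduce to the case $\mu \geq 0$ and argue by contradiction: if $\mu(E) > 0$, inner regularity produces a compact $K \subseteq E$ with $\mu(K) > 0$, and Choquet capacitability forces $\ccap_{H^2_d}(K) = 0$, so every probability measure on $K$ has infinite energy. The restriction $\mu|_K$ is then a nonzero positive Mult-Henkin measure supported on $K$, and the desired contradiction would follow as soon as one knows that $\mu|_K$ has finite $H^2_d$-energy, since that would give $\ccap_{H^2_d}(K) > 0$.

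The main obstacle, and the true crux of the theorem, is precisely this last step: showing that every positive $\Mult(H^2_d)$-Henkin measure has finite $H^2_d$-energy. Because the $H^2_d$-norm is strictly weaker than the $\Mult(H^2_d)$-norm on polynomials, mere WOT-continuity of the integration functional on the multiplier algebra does not automatically extend it to a bounded linear functional on the larger Hilbert space $H^2_d$. I expect this step to rely on the Clou\^atre--Davidson structural description of positive Mult-Henkin measures (or, equivalently, on the column/row representation of multipliers available in complete Pick spaces through the Jury--Martin machinery), which should produce a representation of $\mu|_K$ from which the $H^2_d$-energy bound follows by a Cauchy--Schwarz type estimate.
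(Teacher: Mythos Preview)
Your first displayed argument is fine and matches the paper's easy direction (though your labeling is backwards: you assume $\ccap^*(E)>0$ and conclude $E$ is not totally null, which is the implication ``totally null $\Rightarrow$ capacity zero''). This is exactly how the paper handles it: a probability measure of finite energy on a compact $K\subset E$ induces a bounded functional on $H^2_d$, hence a weak-$*$ continuous functional on $\Mult(H^2_d)$, so it is Henkin.

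The genuine gap is in the hard direction. Your proposed key step, that every positive $\Mult(H^2_d)$-Henkin measure has finite $H^2_d$-energy, is \emph{false}. Already for $H^2(\mathbb D)$, the Henkin measures are precisely the measures absolutely continuous with respect to Lebesgue measure, while the finite-energy measures are (up to an $H^1_0$ perturbation) those with $L^2$ density; any $L^1\setminus L^2$ density gives a positive Henkin measure of infinite energy. The same phenomenon persists in $H^2_d$. What is true (and is Theorem~\ref{thm:henkin_intro}) is that every Henkin measure is absolutely continuous with respect to \emph{some} probability measure of finite energy. That weaker statement would indeed close your argument, but in the paper its proof goes through the compact case of the very equivalence you are trying to establish (via the Glicksberg--K\"onig--Seever decomposition), so it cannot serve as an independent input here.

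The paper's route for ``capacity zero $\Rightarrow$ totally null'' is entirely different from what you sketch. One first reduces to compact $E$ using capacitability and regularity, and then passes through the intermediate notion of an \emph{unboundedness set}: if $\ccap(E)=0$, one constructs $f\in H^2_d$ with $\lim_{r\nearrow 1}|f(r\zeta)|=\infty$ for all $\zeta\in E$ (Theorem~\ref{thm:unboundedness_intro}), by summing suitably normalized potentials of $r$-equilibrium measures. The step ``unboundedness set $\Rightarrow$ totally null'' then comes from \cite[Theorem 4.1]{CH20} and is where the complete Pick property is actually used. So the Clou\^atre--Davidson/complete Pick input you anticipated is real, but it enters through the unboundedness-set characterization, not through any energy bound on Henkin measures.
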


This result will be proved in Section \ref{sec:TN_Henkin}.

Henkin measures for the Hardy space on the ball are characterized by theorems
of Henkin and Cole--Range; see \cite[Chatper 9]{Rudin08}.
It is an open problem to characterize Henkin measures for the Drury--Arveson space;
see \cite[Section 5]{Clouatre2016} and \cite{Hartz17} for more discussion.
The next result gives one possible characterization.

\begin{theorem}
\label{thm:henkin_intro}
A complex Borel measure on $\partial \mathbb B_d$
is $\Mult(H^2_d)$-Henkin if and only if it is 
  absolutely continuous with respect
  to a probability measure of finite $H^2_d$-energy.
\end{theorem}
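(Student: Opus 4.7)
I would prove the two implications separately. The ``if'' direction reduces, by density of simple functions, to the case where the density is a characteristic function, and there the positivity of $\Re K$ supplies the needed finite-energy structure. The ``only if'' direction combines Theorem \ref{thm:cap_TN_intro} with a decomposition of positive measures relative to the $H^2_d$-capacity.

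For the ``if'' direction, suppose $d\mu = f\, d\nu$ with $f \in L^1(\nu)$ and $\mathcal{E}_{H^2_d}(\nu) < \infty$, and let $(p_n)$ be polynomials with $\sup_n \|p_n\|_{\Mult(H^2_d)} \le M$ and $p_n \to 0$ pointwise on $\mathbb{B}_d$. The contractive inclusion $\Mult(H^2_d) \hookrightarrow H^\infty(\mathbb{B}_d)$ gives $\|p_n\|_\infty \le M$ on $\overline{\mathbb{B}_d}$, so approximating $f$ by simple functions in $L^1(\nu)$ with error controlled by $M$ reduces the claim to showing $\int_A p_n\, d\nu \to 0$ for every Borel $A \subset \partial\mathbb{B}_d$. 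A key structural input is that $\Re K(z,w) > 0$ on $\mathbb{B}_d \times \mathbb{B}_d$, since $\Re(1-\langle z,w\rangle) > 0$ forces its reciprocal to have positive real part. Positivity yields $\int_A\int_A \Re K(rz,rw)\, d\nu d\nu \le \int\int \Re K(rz,rw)\, d\nu d\nu$, hence $\mathcal{E}_{H^2_d}(\chi_A \nu) \le \mathcal{E}_{H^2_d}(\nu) < \infty$. Thus the functional $p \mapsto \int_A p\, d\nu$ is bounded on $H^2_d$ with a representer $h_A \in H^2_d$. Finally, $(p_n)$ is bounded in $H^2_d$ (using $\|p\|_{H^2_d} \le \|p\|_{\Mult}$) and $\langle p_n, k_z\rangle = p_n(z) \to 0$ on the total set of reproducing kernels, so $p_n \to 0$ weakly in $H^2_d$, and $\int_A p_n\, d\nu = \langle p_n, h_A\rangle \to 0$.

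For the ``only if'' direction, let $\mu$ be $\Mult(H^2_d)$-Henkin. By definition of totally null measures and Theorem \ref{thm:cap_TN_intro}, $|\mu|$ vanishes on every Borel set of outer $H^2_d$-capacity zero. My plan is to establish a Lebesgue-type decomposition for arbitrary positive finite Borel measures $\lambda$ on $\partial \mathbb{B}_d$: one can write $\lambda = \lambda_1 + \lambda_2$ where $\mathcal{E}_{H^2_d}(\lambda_1) < \infty$ and $\lambda_2$ is concentrated on a Borel set of outer capacity zero. Applied to $|\mu|$ the second piece must vanish, so $|\mu|$ itself has finite energy and $\nu := |\mu|/|\mu|(\partial\mathbb{B}_d)$ is the desired finite-energy dominating probability measure.

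The main obstacle is this decomposition lemma. I would construct $\lambda_1$ via Zorn's lemma as a maximal finite-energy positive measure dominated by $\lambda$; here positivity of $\Re K$ gives monotonicity of energy under domination together with the subadditivity $\sqrt{\mathcal{E}(\alpha+\beta)} \le \sqrt{\mathcal{E}(\alpha)} + \sqrt{\mathcal{E}(\beta)}$ (via Cauchy--Schwarz applied to the positive definite kernel $\Re K$), which keeps chains of bounded total mass inside the family. Maximality then forces $\lambda_2 := \lambda - \lambda_1$ to admit no non-zero finite-energy positive submeasure. The delicate step is to upgrade this to $\lambda_2$ being concentrated on an outer capacity-zero Borel set: I expect to combine the Choquet property of Theorem \ref{thm:Choquet}, which provides inner regularity of the capacity, with an extraction argument showing that if the support of $\lambda_2$ had positive outer capacity, one could locate a compact subset carrying simultaneously positive $\lambda_2$-mass and positive capacity, and then cut down to a non-trivial finite-energy submeasure of $\lambda_2$, contradicting maximality.
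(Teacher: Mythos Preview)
Your ``if'' direction is essentially correct, and in fact more hands-on than the paper's: the paper simply notes that finite-energy measures are Henkin (via Proposition \ref{prop:energy_fa}) and then invokes the fact that Henkin measures form a band, whereas you unpack this directly via simple functions and the weak convergence $p_n \to 0$ in $H^2_d$.

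Your ``only if'' direction, however, has a genuine gap: the decomposition lemma you propose is \emph{false}. You claim that every positive finite Borel measure $\lambda$ on $\partial\mathbb{B}_d$ decomposes as $\lambda = \lambda_1 + \lambda_2$ with $\mathcal{E}_{H^2_d}(\lambda_1) < \infty$ and $\lambda_2$ concentrated on a set of outer capacity zero. If this were true, applying it to $|\mu|$ for Henkin $\mu$ would show that $|\mu|$ itself has finite energy---strictly stronger than the theorem. This fails already for $d=1$: by Example \ref{exa:Henkin_H2}, a positive measure on $\partial\mathbb{D}$ has finite $H^2$-energy iff its Lebesgue density lies in $L^2$, while being Henkin only requires an $L^1$ density. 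Take $\lambda$ with an $L^1$ but not $L^2$ density; since the $H^2$-capacity is comparable to Lebesgue measure, any $\lambda_2 \le \lambda$ concentrated on a capacity-zero set is zero, forcing $\lambda = \lambda_1$ to have finite energy---a contradiction. The Zorn's lemma construction breaks down at exactly this point: given an increasing chain of finite-energy measures $\lambda_\alpha \le \lambda$, boundedness of total mass does not bound the energies (nor does subadditivity of $\sqrt{\mathcal{E}}$), so the supremum of the chain need not have finite energy and the chain need not admit an upper bound in your family.

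The paper avoids this by aiming only for absolute continuity, not finite energy. It applies the Glicksberg--K\"onig--Seever decomposition relative to the weak-$*$ compact convex set $\Lambda = \{\rho \in M_+(\partial\mathbb{B}_d) : \mathcal{E}(\rho) \le 1\}$, obtaining $\mu = \mu_a + \mu_s$ where $\mu_a$ is absolutely continuous with respect to \emph{some} $\rho \in \Lambda$ and $\mu_s$ is concentrated on a set that is null for every measure in $\Lambda$, hence of capacity zero. Theorem \ref{thm:TN_capacity} (which uses the complete Pick property) then shows this set is totally null, and the Henkin property of $\mu$ forces $\mu_s = 0$.
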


This result will also be proved in Section \ref{sec:TN_Henkin}.
We also obtain a characterization of  \emph{$\Mult(H^2_d)$-totally singular} measures, i.e.\ measures
that are singular with respect to every $\Mult(H^2_d)$-Henkin measure;
see Theorem \ref{thm:TS}.
These appear in the description
of the dual space of Arveson's algebra $\mathcal A_d$;
see \cite{Clouatre2016,DH23}.

In the Dirichlet space, capacity zero sets also play a crucial role
in the study of cyclic functions, as was shown by Brown and Shields \cite{BS84}.
We will prove a version of the Brown--Shields result for the Drury--Arveson space.
Recall that $f \in H^2_d$ is said to be \emph{cyclic} if there exists a sequence $(p_n)$ of
polynomials such that $(p_n  f)$ converges to $1$ in $H^2_d$.
We write $f^*(\zeta) = \lim_{r \nearrow 1} f(r \zeta)$, which exists
for all $\zeta \in \partial \mathbb B_d$ outside a set of outer capacity zero by Theorem \ref{thm:boundary_limits_intro}. We also let
\[
    Z(f^*) = \{\zeta \in \partial \mathbb B_d: f^*(\zeta) = 0 \}.
\]
Then our notion of capacity yields the following necessary condition
for cyclicity, which will be proved in Section \ref{sec:cyclic}.

\begin{theorem}
\label{thm:cyclic_intro}
    If $f \in H^2_d$ is cyclic, then $Z(f^*)$ has outer $H^2_d$-capacity zero.
\end{theorem}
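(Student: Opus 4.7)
The plan is to combine the capacitary weak-type maximal inequality that underlies Theorem \ref{thm:boundary_limits_intro} with a Borel--Cantelli-style argument. Since $f$ is cyclic, there exist polynomials $p_n$ with $p_n f \to 1$ in $H^2_d$, and after extracting a subsequence I may assume $\|p_n f - 1\|_{H^2_d} \le 2^{-n}$; set $g_n := p_n f - 1$.

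From Section \ref{sec:weak_type} I expect a weak-type estimate of the form
\begin{equation*}
\ccap^*\bigl(\{\zeta \in \bnd : M g(\zeta) > t\}\bigr) \le C\, \|g\|_{H^2_d}^2 / t^2,
\end{equation*}
where $M$ denotes the Kor\'anyi maximal function. Applied to $g_n$ at the threshold $t_n = 2^{-n/2}$, this bounds the capacity of the corresponding level set by $C\, 2^{-n}$. Countable subadditivity of $\ccap^*$ (a standard property of capacities derived from an energy) shows that the $\limsup$ of these level sets has outer capacity zero. Combining with the exceptional set of Theorem \ref{thm:boundary_limits_intro}, I obtain a Borel set $F \subset \bnd$ of outer $H^2_d$-capacity zero such that, for every $\zeta \in \bnd \setminus F$, the Kor\'anyi (and in particular radial) limit $f^*(\zeta)$ exists and $M g_n(\zeta) \le 2^{-n/2}$ for all sufficiently large $n$.

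For such $\zeta$, the inequality $|p_n(z) f(z) - 1| \le 2^{-n/2}$ holds throughout the Kor\'anyi region at $\zeta$. Letting $z \to \zeta$ radially and using that $p_n$ is continuous on $\overline{\mathbb{B}_d}$ gives $|p_n(\zeta) f^*(\zeta) - 1| \le 2^{-n/2}$. If additionally $\zeta \in Z(f^*) \setminus F$, then $f^*(\zeta) = 0$, so the left-hand side equals $1$, a contradiction for large $n$. Therefore $Z(f^*) \subset F$, which has outer $H^2_d$-capacity zero by monotonicity of $\ccap^*$.

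The essential ingredient is the capacitary weak-type inequality, but that is precisely what Section \ref{sec:weak_type} will supply; beyond this, the only point requiring attention is the Borel--Cantelli step, which is routine once countable subadditivity of $\ccap^*$ is in hand. I do not foresee further obstacles, and the argument closely mirrors the classical Brown--Shields reasoning in the Dirichlet space, with the Kor\'anyi maximal function and our $H^2_d$-capacity in place of their planar counterparts.
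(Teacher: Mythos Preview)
Your argument is correct and relies on the same key ingredient as the paper, namely the capacitary weak-type inequality for the Kor\'anyi maximal function (Theorem \ref{thm:weak_type}), together with countable subadditivity of $\ccap^*$.

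The packaging differs slightly. The paper first proves the structural result that for any $E \subset \partial \mathbb{B}_d$, the set $(H^2_d)_E = \{g \in H^2_d : g^* = 0 \text{ q.e.\ on } E\}$ is a closed multiplier-invariant subspace (Theorem \ref{thm:inv_sub_quasi_everywhere}); the cyclicity statement then follows immediately by taking $E = Z(f^*)$ and observing $1 \notin (H^2_d)_E$ when $\ccap^*(E) > 0$. The closedness proof uses the weak-type inequality applied to $f - f_n$ in essentially the same way you apply it to $g_n = p_n f - 1$, but without extracting a summable subsequence or invoking Borel--Cantelli; a single limit $n \to \infty$ suffices since only $\|f - f_n\| \to 0$ is needed. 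Your direct Borel--Cantelli argument is a valid unpacking of this, trading the intermediate abstraction for an explicit quantitative bound. The paper's route has the side benefit of yielding the invariant-subspace statement, which is of independent interest; your route is self-contained and slightly shorter for the specific cyclicity conclusion.
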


Whereas our presentation focused on the Drury--Arveson space so far,
it is worth discussing for which other spaces our results hold.
We carry out the definition of capacity for unitarily invariant reproducing
kernel Hilbert spaces on the ball.
However, many of the basic properties of capacity such as subadditivity require
non-negativity of the real part of the reproducing kernel. For instance, this excludes the Bergman space and the Hardy space $H^2(\mathbb{B}_d)$ for $d\geq 2$.

The proof of Theorem \ref{thm:boundary_limits_intro}, the existence of \Kor\ limits,
hinges on the capacitary weak-type inequality for the maximal function,
which in turn depends on the very particular form of the Drury--Arveson kernel.
Our arguments also apply to standard weighted Dirichlet spaces
on the ball, namely the spaces $\mathcal D_a$ for $a \in [0,1)$,
whose reproducing kernel is
\[
    \frac{1}{(1 - \langle z,w \rangle)^a}
\]
in case $a \neq 0$ and
\[
    \log \Big( \frac{e}{1 - \langle z,w \rangle} \Big)
\]
in case $a = 0$.
In fact, for those spaces, the argument simplifies because the real part and the modulus
of the kernel are comparable. This leads to a new proof of the Hilbert
space case of a theorem of Ahern and Cohn \cite{AC89};
see Remark \ref{rem:AC_new_proof}.
Our proof that the capacity is a Choquet capacity also uses the weak-type inequality
for the maximal function, so it is specific to the Drury--Arveson space and standard
weighted Dirichlet spaces.

Even though the existence of \Kor\ limits depends on the particular form of the kernel,
we establish a result about radial limits that applies to any unitarily
invariant space whose kernel has non-negative real part.
The price we have to pay is that we have to settle for a weaker notion
of convergence, namely convergence in capacity. The precise statement
can be found in Theorem \ref{thm:convergence_in_capacity}.

Theorem \ref{thm:unboundedness_intro} holds provided
the reproducing kernel has non-negative real part;
see Theorem \ref{thm:cap_0_unboundedness}.
As for Theorem \ref{thm:cap_TN_intro}, the implication ``totally null
implies capacity zero'' is true in general (at least for compact sets), while the converse depends on the complete Pick property; see Theorem \ref{thm:TN_capacity}. Similarly in Theorem \ref{thm:henkin_intro} the ``if'' part holds true for general spaces while the ``only if'' part requires the complete Pick property.

The necessity of the capacitary condition for cyclicity
in Theorem \ref{thm:cyclic_intro} depends on the capacitary weak type inequality,
so the result is again specific to the Drury--Arveson space and standard
weighted Dirichlet spaces as stated.
But if one is willing to settle for the a priori weaker conclusion of
inner capacity zero, then the result holds for general spaces
whose kernel has non-negative real part; see Corollary \ref{cor:cyclic_inner_cap}.

\section{Energy and capacity in RKHS}
\label{sec:energy_basics}

When proving Fatou's theorem for $H^2(\mathbb D)$, it is convenient to also consider harmonic functions
in addition to holomorphic ones. In the case of the Drury--Arveson space $H^2_d$,
whose reproducing kernel is $K_{H^2_d}(z,w) = \frac{1}{1 - \langle z,w \rangle}$,
we consider a space of pluriharmonic functions. Explicitly, we let $\PHDA$
be the reproducing kernel Hilbert space on $\mathbb{B}_d$ with kernel
\[
    k_{h^2_d}(z,w) =
    2 \Re \frac{1}{1 - \langle z,w \rangle} - 1 =
    \frac{1 - |\langle z,w \rangle|^2}{|1 - \langle z,w \rangle|^2}.
\]
The representation
\begin{equation*}
    k_{h^2_d}(z,w) =
    \sum_{n=0}^\infty \langle z,w \rangle^n + \sum_{n=1}^\infty \overline{\langle z,w \rangle}^n
\end{equation*}
shows that $k_{h^2_d}$ is indeed a positive semi-definite function.
We call $\PHDA$ the \emph{pluriharmonic Drury--Arveson space}.
If $d=1$, then $k_{\PHDA}$ is the Poisson kernel of $\mathbb{D}$, and $\PHDA$ is the harmonic Hardy space $h^2(\mathbb{D})$.

In order to deal with the holomorphic and the pluriharmonic setting at the same time,
we make the following definition.

\begin{definition}
  \label{defn:pluriharmonic_space}
A \emph{unitarily invariant space of pluriharmonic functions} on $\mathbb{B}_d \subset \mathbb{C}^d$
is a reproducing kernel Hilbert space $\mathcal{H}$ of functions on $\mathbb{B}_d$ whose reproducing kernel is of the form
\begin{equation*}
  K(z,w) = \sum_{n=0}^\infty a_n \langle z,w \rangle^n + \sum_{n =1}^\infty a_{-n} \overline{ \langle z,w \rangle}^n,
\end{equation*}
where $a_n \ge 0$ for all $n \in \mathbb{Z}$, $a_0 =1$ and the series converges on $\mathbb{B}_d \times \mathbb{B}_d$.
\end{definition}

The most important examples for us occur when
\begin{enumerate}
  \item $a_n = 0$ for all $n < 0$, in which case $\mathcal{H}$ is a unitarily invariant space of holomorphic
    functions on $\mathbb{B}_d$, or
  \item $a_{-n} = a_n$ for all $n \in \mathbb{Z}$, in which case $K$ is real-valued.
\end{enumerate}
In the setting of Definition \ref{defn:pluriharmonic_space}, let us write $K = K_+ + \overline{K_-}$, where
\begin{equation*}
  K_+(z,w) = \sum_{n=0}^\infty a_n \langle z,w \rangle^n \quad \text{ and } \quad
  K_-(z,w) = \sum_{n=1}^\infty a_{-n} \langle z,w \rangle^n.
\end{equation*}
Then $K_+$ and $K_-$ are reproducing kernels of spaces of holomorphic functions,
say of $\mathcal{H}_+$ and $\mathcal{H}_-$, respectively.
We collect a few basic and straightforward properties of our spaces.

\begin{lemma}
  \label{lem:pluriharmonic_RKHS_basic}
  With notation as above, the following statements hold:
  \begin{enumerate}[label=\normalfont{(\alph*)}]
    \item Every function $f \in \mathcal{H}$ has a unique decomposition
      \begin{equation*}
        f = g + \overline{h},
      \end{equation*}
      where $g \in \mathcal{H}_+, h \in \mathcal{H}_-$. Conversely, any function of this form belongs to $\mathcal{H}$.
      Moreover,
      \begin{equation*}
        \|f\|_{\mathcal{H}}^2 = \|g\|_{\mathcal{H}_+}^2 + \|h\|_{\mathcal{H}_-}^2.
      \end{equation*}
    \item The set
      \begin{equation*}
        \{ z^\alpha: \alpha \in \mathbb{N}_0^d, a_{|\alpha|} \neq 0 \}
        \cup \{ \overline{z}^\beta: \beta \in \mathbb{N}_0^d \setminus \{0\}, a_{-|\beta|} \neq 0 \}
      \end{equation*}
      forms an orthogonal basis of $\mathcal{H}$. Moreover,
      \begin{equation*}
        \|z^\alpha\|^2 = \frac{\alpha!}{|\alpha|!} \frac{1}{a_{|\alpha|}}
          \quad \text{ and } \quad
        \|\overline{z}^\beta\|^2 = \frac{\beta!}{|\beta|!} \frac{1}{a_{-|\beta|}} .
      \end{equation*}
    \item
  Let $f: \mathbb{B}_d \to \mathbb{C}$ be a function.
  Then $f \in \mathcal{H}$ if and only if $f_r \in \mathcal{H}$
  for all $r \in [0,1)$ and $\sup_{0 \le r < 1} \|f_r\| < \infty$.
  In this case, $\|f_r\|$ is an increasing function of $r$,
  $\|f\| = \sup_{0 \le r < 1} \|f_r\|$
  and $\lim_{r \to 1} \|f - f_r\| = 0$.
  \end{enumerate}
\end{lemma}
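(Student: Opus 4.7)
I would treat the three parts in sequence, with (a) doing the conceptual work and (b), (c) reducing to orthogonal basis computations.

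For (a), the key observation is that $\overline{K_-(z,w)}$ is again a positive-definite kernel, whose RKHS is $\mathcal{H}_-^* := \{\overline{h} : h \in \mathcal{H}_-\}$ with $\|\overline h\|_{\mathcal{H}_-^*} = \|h\|_{\mathcal{H}_-}$. Since $K = K_+ + \overline{K_-}$, Aronszajn's sum-of-kernels theorem identifies $\mathcal{H}$ with $\mathcal{H}_+ + \mathcal{H}_-^*$ and realizes $\|f\|_\mathcal{H}^2$ as the infimum of $\|g\|_{\mathcal{H}_+}^2 + \|\overline h\|_{\mathcal{H}_-^*}^2$ over decompositions $f = g + \overline h$. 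Uniqueness of the decomposition (and thus orthogonality of the sum) reduces to the claim $\mathcal{H}_+ \cap \mathcal{H}_-^* = \{0\}$: a function that is simultaneously holomorphic and anti-holomorphic on $\mathbb{B}_d$ is constant, and the series defining $K_-$ starts at $n=1$, so $\mathcal{H}_-^*$ contains no nonzero constant.

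For (b), the multinomial identity $\langle z,w\rangle^n = \sum_{|\alpha|=n} \frac{n!}{\alpha!} z^\alpha \overline{w}^\alpha$ rewrites
\[
  K_+(z,w) = \sum_{\alpha \in \mathbb{N}_0^d} a_{|\alpha|} \frac{|\alpha|!}{\alpha!}\, z^\alpha \overline{w}^\alpha,
\]
from which the monomials $\{z^\alpha : a_{|\alpha|} \neq 0\}$ are read off as an orthogonal basis of $\mathcal{H}_+$ with $\|z^\alpha\|^2 = \frac{\alpha!}{|\alpha|! a_{|\alpha|}}$. The analogous computation for $\mathcal{H}_-$, combined with the isometry $h \mapsto \overline h$ and the orthogonal decomposition from (a), gives the full basis of $\mathcal{H}$.

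For (c), given $f \in \mathcal{H}$ with decomposition $f = g + \overline h$ from (a) and power series expansions $g(z) = \sum_\alpha c_\alpha z^\alpha$, $h(z) = \sum_\beta d_\beta z^\beta$, Parseval in the basis of (b) yields
\[
  \|f_r\|_\mathcal{H}^2 = \sum_\alpha |c_\alpha|^2 r^{2|\alpha|} \frac{\alpha!}{|\alpha|! a_{|\alpha|}} + \sum_\beta |d_\beta|^2 r^{2|\beta|} \frac{\beta!}{|\beta|! a_{-|\beta|}},
\]
which is manifestly monotone increasing in $r$ with supremum $\|f\|_\mathcal{H}^2$; dominated convergence against the summable majorant then yields $\|f - f_r\|_\mathcal{H} \to 0$. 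For the converse, assuming $f_r \in \mathcal{H}$ for all $r<1$ and $\sup_r \|f_r\|_\mathcal{H} < \infty$, I first observe that $f$ must be pluriharmonic on $\mathbb{B}_d$: for each $s<1$ the dilate $f_s$ is pluriharmonic on $\mathbb{B}_d$, so $f(w) = f_s(w/s)$ is pluriharmonic on $s\mathbb{B}_d$. Expanding $f = g + \overline h$ with $g,h$ holomorphic on $\mathbb{B}_d$ and $h(0)=0$, the same Parseval identity for each $f_r$ combined with monotone convergence as $r \to 1$ forces $g \in \mathcal{H}_+$ and $h \in \mathcal{H}_-$, hence $f \in \mathcal{H}$. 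The only mildly delicate point in the whole lemma is this pluriharmonicity step in the converse of (c); the rest is standard RKHS bookkeeping, and I anticipate no substantial obstacle.
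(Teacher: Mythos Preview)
Your proof is correct and matches the paper's approach closely for (a), (b), and the forward direction of (c); the paper proves (a) by directly constructing the space $\{g+\overline h\}$ with the orthogonal inner product and checking the reproducing kernel, which is just Aronszajn's theorem done by hand in this case.

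For the converse in (c) you take a slightly different route: the paper argues that the bounded family $(f_r)$ converges weakly in $\mathcal H$ (using $\langle f_r,k_w\rangle = f(rw)\to f(w)$ together with density of the kernels) and identifies the weak limit with $f$, whereas you first establish pluriharmonicity of $f$, write $f=g+\overline h$, and use Parseval plus monotone convergence on the coefficients. Both arguments are standard; yours has the minor advantage that it makes explicit the continuity/pluriharmonicity of $f$ needed to justify $f(rw)\to f(w)$, which the paper's weak-limit argument uses implicitly.
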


\begin{proof}
  (a) If $g_1,g_2 \in \mathcal{H}_+$ and $h_1,h_2 \in \mathcal{H}_-$ satisfy
  \begin{equation*}
    g_1 + \overline{h_1} = g_2 + \overline{h_2},
  \end{equation*}
  then
  \begin{equation*}
    g_1 - g_2 = \overline{h_1 - h_2}.
  \end{equation*}
  So $h_1 - h_2$ and $\overline{h_1 - h_2}$ are both holomorphic,
  hence $h_1 - h_2$ is constant. Moreover, $h_1(0) = h_2(0) = 0$, as the functions in $\mathcal{H}_-$ vanish
  at $0$. So $h_1 = h_2$.
  This shows uniqueness of the decomposition.

  Next, we define
  \begin{equation*}
    \mathcal{K} = \{ g + \overline{h}, g \in \mathcal{H}_+, h \in \mathcal{H}_-\}
  \end{equation*}
  with inner product
  \begin{equation*}
    \langle g_1 + \overline{h_1}, g_2 + \overline{h_2} \rangle_{\mathcal{K}}
    = \langle g_1,g_2 \rangle_{\mathcal{H}_+}
    + \overline{\langle h_1, h_2 \rangle}_{\mathcal{H}_-} .
  \end{equation*}
  If $w \in \mathbb{B}_d$, then $K(\cdot,w) = K_+(\cdot,w) + \overline{K_-(\cdot,w)} \in \mathcal{K}$, and
  \begin{equation*}
    \langle g + \overline{h}, K(\cdot,w) \rangle_{\mathcal{K}}
    = \langle g, K_+(\cdot,w) \rangle_{\mathcal{H}_+}
    + \overline{\langle h, K_-(\cdot,w) \rangle}_{\mathcal{H}_-} 
    = g(w) + \overline{h(w)}.
  \end{equation*}
  Thus, $\mathcal{K}$ is a reproducing kernel Hilbert space on $\mathbb{B}_d$,
  whose reproducing kernel is $K$.
  It follows that
  $\mathcal{H} = \mathcal{K}$ with equal norms.

  (b) Follows from (a) and a standard result about unitarily invariant spaces of holomorphic functions.
  A proof can for instance be found in \cite[Proposition 4.1]{GHX04}.

  (c) Let $f \in \mathcal{H}$. By (b), we may expand
  \begin{equation*}
    f(z) = \sum_{\alpha \in \mathbb{N}_0^d} \widehat{f}(\alpha) z^\alpha
    + \sum_{\beta \in \mathbb{N}_0^d \setminus \{0\}} \widecheck{f}(\beta) \overline{z}^\beta
  \end{equation*}
  for suitable scalars $\widehat{f}(\alpha), \widecheck{f}(\beta) \in \mathbb{C}$.
  Then
  \begin{equation*}
    f_r(z) = \sum_{\alpha \in \mathbb{N}_0^d} \widehat{f}(\alpha) r^{|\alpha|} z^\alpha
    + \sum_{\beta \in \mathbb{N}_0^d \setminus \{0\}} \widecheck{f}(\beta) r^{|\beta|}
    \overline{z}^\beta,
  \end{equation*}
  so another application of (b) shows that $f_r \in \mathcal{H}$, that $\|f_r\|_{\mathcal{H}}$ is increasing
  as a function of $r$ and that $\|f\|_\mathcal{H} = \sup_{0 \le r < 1} \|f_r\|_\mathcal{H}$.
  An elementary estimate or the dominated convergence theorem for sums shows that $\lim_{r \to 1} \|f - f_r\|_\mathcal{H} = 0$.

  Conversely, let $f_r \in \mathcal{H}$ for all $r \in [0,1)$ and $\|f_r\|_\mathcal{H}$ be bounded.
  Since the linear span of the kernel functions is dense in $\mathcal{H}$, it follows
  that $(f_r)$ converges weakly to a function in $\mathcal{H}$ as $r \to 1$;
  the weak limit necessarily equals $f$, so $f \in \mathcal{H}$.
\end{proof}

Let $\mu \in M(\overline{\mathbb{B}_d})$ be a complex Borel measure
on $\overline{\mathbb{B}_d}$. 
We will later restrict attention to measures supported on the boundary $\partial \mathbb{B}_d$,
but for now, it is instructive to work in this more general setting. We recall that such measures are automatically inner and outer regular. 
For $0 \le r < 1$, we define
\begin{equation*}
  \mathcal{E}_r(\mu) = \int_{\overline{\mathbb B_d}} \int_{\overline{\mathbb B_d}} K(r z, r w) d \mu(w) d \overline{\mu}(z).
\end{equation*}
For $\alpha \in \mathbb{N}_0^d$, let
\begin{equation*}
  \widehat{\mu}(\alpha) = \int_{\overline{\mathbb B_d}} \overline{z^\alpha} \, d \mu(z)
  \quad \text{ and } \quad
  \widecheck{\mu}(\alpha) = \int_{\overline{\mathbb B_d}} z^\alpha \, d \mu(z).
\end{equation*}
The series defining $K$ converges uniformly on compact subsets of $\mathbb{B}_d$, so a small computation shows that
\begin{equation}
  \label{eqn:energy_formula}
  \mathcal{E}_r(\mu) =
  \sum_{n=0}^\infty a_n r^{2n} \sum_{|\alpha| = n} \frac{|\alpha|!}{\alpha!} |\widehat{\mu}(\alpha)|^2
  + \sum_{n=1}^\infty a_{-n} r^{2n} \sum_{|\alpha| = n} \frac{|\alpha|!}{\alpha !} |\widecheck{\mu}(\alpha)|^2.
\end{equation}
This shows that $\mathcal{E}_r(\mu)$ is a non-negative real number and increasing
as a function of $r$.

\begin{definition}
  \label{defn:energy}
  Let $\mathcal H$ be a unitarily invariant space of pluriharmonic functions
  on $\mathbb B_d$ with reproducing kernel $K$.
  Let $\mu \in M(\overline{\mathbb B_d})$ be a complex Borel measure
  on $\overline{\mathbb B_d}$. We define the \emph{$\mathcal{H}$-energy} of $\mu$ to be
  \begin{equation*}
    \mathcal{E}(\mu) = \mathcal{E}_{\mathcal H}(\mu) =  \sup_{0 \le r < 1} \mathcal{E}_r(\mu) = \sup_{0 \le r < 1} \int_{\overline{\mathbb B_d}} \int_{\overline{\mathbb B_d}} K(r z, r w) d \mu(w) d \overline{\mu}(z) \in [0,\infty].
  \end{equation*}
  If $E \subset \overline{\mathbb B_d}$ is compact, we
  let $P(E) \subset M(\overline{\mathbb B_d})$ be the set of Borel probability measures supported on $E$
  and
  define the \emph{$\mathcal{H}$-capacity} of $E$ to be
  \begin{equation*}
    \ccap(E) = \ccap_{\mathcal{H}}(E) = \sup \Big\{ \frac{1}{\mathcal{E}(\mu)} : \mu \in P(E) \Big\}.
  \end{equation*}
\end{definition}

Gramians of reproducing kernels are a key tool in the study of reproducing kernel Hilbert spaces.
The following example shows how to obtain them as energies of suitable measures.
\begin{example}
  If $z_1,\ldots,z_n \in \mathbb{B}_d$, $\lambda_1,\ldots,\lambda_n \in \mathbb{C}$ and $\mu = \sum_{j=1}^n \lambda_j \delta_{z_j}$
  is a finite linear combination of Dirac measures, then
  \begin{equation*}
    \mathcal{E}(\mu) = \lim_{r \to 1} \mathcal{E}_r(\mu)
    = \sum_{i=1}^n \sum_{j=1}^n K(z_i,z_j) \lambda_j \overline{\lambda_i}.
  \end{equation*}
  If $G$ is the Gramian of the reproducing kernels at the points $z_1,\ldots,z_n$
  and $\lambda = (\lambda_1,\ldots,\lambda_n) \in \mathbb{C}^n$, then
  \begin{equation*}
    \mathcal{E}(\mu) = \langle G \lambda , \lambda \rangle.
  \end{equation*}
\end{example}

We can extend the definition of capacity to all subsets of $\overline{\mathbb{B}_d}$ by approximating from within by compact sets.
That is, if $E\subset \overline{\mathbb{B}_d}$, we define
\[  \ccap(E)=\sup\{  \ccap(F): F\subset E, F \,\, \text{compact}\, \}. \]
We shall refer to this capacity as \emph{inner capacity}. 

Similarly, an \emph{outer capacity} can be defined by approximating from outside by open sets:
\[ \ccap^*(E) = \inf\{ \ccap(G): G \supset E, G \,\, \text{open} \, \}. \]
We refer to Appendix \ref{sec:app_abstract_capacities} for more details on this construction.

\begin{remark}
  \label{rem:energy_positive_kernel}
  The most important case for our purposes occurs when $\mu$ is a positive measure
  and $\Re K \ge 0$.
  In this case,
  \begin{equation*}
    \mathcal{E}(\mu) = \lim_{r \to 1} \int_{\overline{\mathbb B_d}} \int_{\overline{\mathbb B_d}}
    \Re K(r z, r w) d \mu(w) d \mu(z),
  \end{equation*}
  so the $\mathcal{H}$-energy can be understood as a limit of classical energy
  integrals with respect to a non-negative kernel.
  This also shows that Definition \ref{defn:energy} is consistent with the definition given in the introduction.
\end{remark}

We record the following basic observation for future reference.

\begin{lemma}
  \label{lem:capacity_zero}
  Let $\mathcal{H}$ be a unitarily invariant space of pluriharmonic functions
  whose kernel has non-negative real part.
  Let $E \subset \overline{\mathbb{B}_d}$ be Borel. Then $\ccap(E) = 0$
  if and only if $\mu(E) = 0$ for all $\mu \in P(\overline{\mathbb{B}_d})$ with $\mathcal{E}(\mu) < \infty$.
\end{lemma}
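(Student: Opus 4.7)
The plan is to prove the two implications separately, exploiting inner regularity of Borel measures on $\overline{\mathbb{B}_d}$ and the fact that $\Re K \ge 0$ allows one to control the energy of a restricted measure by the energy of the original one.

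For the easy direction, assume every $\mu \in P(\overline{\mathbb{B}_d})$ with $\mathcal{E}(\mu) < \infty$ satisfies $\mu(E) = 0$. If $F \subset E$ is compact and $\nu \in P(F)$, then $\nu(E) \ge \nu(F) = 1 > 0$, so the hypothesis forces $\mathcal{E}(\nu) = \infty$. Thus $\ccap(F) = 0$ for every compact $F \subset E$, and the definition of inner capacity gives $\ccap(E) = 0$.

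For the other direction, assume $\ccap(E) = 0$ and suppose, towards a contradiction, that some $\mu \in P(\overline{\mathbb{B}_d})$ has $\mathcal{E}(\mu) < \infty$ and $\mu(E) > 0$. By inner regularity of $\mu$, I can choose a compact $F \subset E$ with $\mu(F) > 0$, and normalize to obtain a probability measure $\nu = \mu|_F / \mu(F) \in P(F)$. Since $\mu$ is a positive measure and $\mathcal{E}_r(\mu)$ is real, Remark \ref{rem:energy_positive_kernel} lets me write
\begin{equation*}
    \mathcal{E}_r(\nu) = \frac{1}{\mu(F)^2} \int_F \int_F \Re K(rz,rw)\, d\mu(w) d\mu(z) \le \frac{1}{\mu(F)^2} \mathcal{E}_r(\mu),
\end{equation*}
where the inequality uses crucially that $\Re K \ge 0$, so that enlarging the domain of integration from $F \times F$ to $\overline{\mathbb{B}_d} \times \overline{\mathbb{B}_d}$ only increases the integral. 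Taking the supremum over $r$ yields $\mathcal{E}(\nu) \le \mathcal{E}(\mu)/\mu(F)^2 < \infty$. Hence $\ccap(F) \ge 1/\mathcal{E}(\nu) > 0$, which contradicts $\ccap(E) = 0$.

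The only non-routine point is this monotonicity-under-restriction step, and it is precisely the hypothesis $\Re K \ge 0$ that makes it work; without this assumption, restricting a positive measure could in principle lead to a larger or incomparable energy, and the argument would break down.
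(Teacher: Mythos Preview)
Your proof is correct and follows essentially the same approach as the paper: both directions use inner regularity to pass to a compact subset, and the key step is that $\Re K \ge 0$ ensures the restriction $\chi_F\,d\mu$ has energy bounded by that of $\mu$. You spell out the monotonicity-under-restriction inequality explicitly, whereas the paper simply cites Remark~\ref{rem:energy_positive_kernel}; otherwise the arguments are identical.
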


\begin{proof}
  Sufficiency is immediate from the definition.
  As for necessity, suppose that there exists a probability measure $\mu$
  with $\mathcal{E}(\mu) < \infty$ and $\mu(E) > 0$.
  By regularity of $\mu$, there exists a compact set $F \subset E$ with $\mu(F) > 0$.
  Since the real part of the kernel of $\mathcal{H}$
  is non-negative, it follows that the measure $\chi_F d \mu$ also has finite energy;
  see Remark \ref{rem:energy_positive_kernel}. Normalizing this measure,
  we see that $F$ has positive capacity, hence so does $E$.
\end{proof}

\begin{remark}
  Let $\mathcal{H}$ be a unitarily invariant space of holomorphic functions on $\mathbb{B}_d$
  with kernel $K$.
  Instead of working with $\Re K$, it is also natural to work with $k = 2 \Re K - 1$.
  The corresponding reproducing kernel Hilbert space $\widetilde{\mathcal H}$
  is a unitarily invariant space of pluriharmonic functions,
  whose kernel is real valued. 
  The space $\mathcal H$ embeds isometrically into $\widetilde{\mathcal H}$.
  For instance, if $\mathcal{H} = H^2(\mathbb{D})$, then $k$ is the Poisson kernel and $\widetilde{\mathcal{H}} = h^2(\mathbb{D})$, the harmonic Hardy space. 
  Working with $h^2(\mathbb{D})$ also has the advantage that the capacity
  turns out to be equal to Lebesgue measure, not just comparable to it,
  see Example \ref{exa:basic_capacities} (c) below.
  In general, if $\mu$ is a positive measure, then the energy formula \eqref{eqn:energy_formula} implies
  that $\mathcal{E}_{\mathcal{H}}(\mu)$ and $\mathcal{E}_{\widetilde{\mathcal{H}}}(\mu)$ are comparable.

  On the other hand, if we work with $\Re K$ instead of $2 \Re K - 1$,
  then the inclusion of the holomorphic space into the pluriharmonic space
  is bounded and bounded below. The energies of positive measures in the holomorphic
  and the pluriharmonic space agree.
\end{remark}

\begin{example}
  \label{exa:basic_capacities}
  (a) Let $\mathcal{H} = \mathcal{D}$, the classical Dirichlet space on the disc,
  with reproducing kernel
  \begin{equation*}
    K_\mathcal{D}(z,w) = \log \Big( \frac{e}{1 - z \overline{w}} \Big).
  \end{equation*}
  Then the energy formula \eqref{eqn:energy_formula}
  shows that for every positive measure $\mu \in M(\partial \mathbb{D})$,
  we have
  \begin{equation*}
    \mathcal{E}_{\mathcal{D}}(\mu) = |\widehat{\mu}(0)|^2 + \sum_{n=1}^\infty \frac{| \widehat{\mu}(n)|^2}{n},
  \end{equation*}
  which is comparable to the usual logarithmic energy;
  see for instance \cite[Theorem 2.4.4]{EKM+14}.
  In fact, we can see this directly, as in the proof of \cite[Theorem 2.4.4]{EKM+14},
  since
  \begin{align*}
    \int_{\partial \mathbb{D}} \int_{\partial \mathbb{D}} \log
    \Big| \frac{e}{1 - z \overline{w}} \Big| \, d \mu(w) d \mu(z)
    &= \lim_{r \to 1}
    \int_{\partial \mathbb{D}} \int_{\partial \mathbb{D}} \log
    \Big| \frac{e}{1 - r^2 z \overline{w}} \Big| \, d \mu(w) d \mu(z) \\
    &= \mathcal{E}_{\mathcal{D}}(\mu).
  \end{align*}
  Here, we used the dominated convergence theorem and the
  basic inequality $|\frac{1}{1 - r^2 z \overline{w}}| \le | \frac{2}{1 - z \overline{w}}|$
  if the double integral on the left is finite, and Fatou's lemma if the double integral is infinite.
  Therefore, $\ccap_{\mathcal{D}}$ is comparable to the usual logarithmic capacity. 

  (b) Let $\mathcal{H} = \mathcal{D}_a$ be the weighted Dirichlet space with reproducing kernel
  \begin{equation*}
    K_{\mathcal{D}_a}(z,w) = \frac{1}{(1 - z \overline{w})^a}
  \end{equation*}
  for $0 < a <1$. Then $\Re K_{\mathcal{D}_a}$ and $|K_{\mathcal{D}_a}|$ are comparable, and a similar argument as in part (a) shows that
  \begin{equation*}
    \mathcal{E}_{\mathcal{D}_a}(\mu) \approx
    \int_{\partial \mathbb{D}} \int_{\partial \mathbb{D}} \frac{1}{|1 - z \overline{w}|^a} \, d \mu(w) d \mu(z),
  \end{equation*}
  so $\ccap_{\mathcal{D}_a}$ is comparable to the Riesz capacity of degree $a$.
  
  (c)
  We already saw in the introduction that $\ccap_{H^2(\mathbb{D})}$ is comparable
  to Lebesgue measure.
  If we replace $H^2(\mathbb{D})$ with the harmonic Hardy space $h^2(\mathbb{D})$, whose reproducing kernel
  is the Poisson kernel
  \begin{equation*}
    K_{h^2(\mathbb{D})}(z,w) = P(z,w) =  \frac{1 - |z \overline{w}|^2}{|1 - z \overline{w}|^2},
  \end{equation*}
  then the same computation as in the introduction shows the exact formula $\ccap_{h^2(\mathbb{D})}(E) = m(E)$
  for all compact subsets $E \subset \partial \mathbb{D}$.  Moreover, $d \mu = \frac{\chi_E}{m(E)} d m$ is a probability measure supported on $E$ that minimizes
  the energy, i.e.\ an equilibrium measure.
  We also see that unlike the case of the (weighted) Dirichlet space, we cannot express
  $\mathcal{E}_{h^2(\mathbb{D})}(\mu)$ by taking the limit $r \to 1$ inside the integral,
  as the Poisson kernel $P$ vanishes for $z,w \in \partial \mathbb{D}$ with $z \neq w$.
\end{example}

We require the following functional analytic interpretation of energy.
The proof follows along the same lines as the proof of \cite[Proposition 3.2]{CH20}.
Since the present setting is slightly different and since some parts of the earlier proof
do not generalize, we provide the details.

\begin{proposition}
  \label{prop:energy_fa}
  Let $\mathcal H$ be a unitarily invariant space of pluriharmonic
  functions on $\mathbb B_d$ with reproducing kernel $K$.
  Let $\mu \in M(\overline{\mathbb B_d})$. Define
  \[
  f_\mu(z) = \int_{\overline{\mathbb B_d}} K(z,w) d \mu(w).
  \]
  The following are equivalent:
  \begin{enumerate}[label=\normalfont{(\roman*)}]
    \item $\mathcal{E}(\mu) < \infty$;
    \item  the densely defined integration functional $\rho_{\overline{\mu}}$ with respect to $\overline{\mu}$ is bounded on $\mathcal{H}$, i.e.\
  there exists a constant $C < \infty$ such that
  \begin{equation*}
    \Big| \int_{\overline{\mathbb B_d}} g \, d \overline{\mu} \Big| \le C \|g\|_{\mathcal{H}}
    \quad \text{ for all } g \in \mathcal{H} \cap C(\overline{\mathbb{B}_d}).
  \end{equation*}
    \item $f_\mu \in \mathcal{H}$.
  \end{enumerate}
  In this case, we have
  \begin{equation*}
    \|\rho_{\overline{\mu}}\|^2_{\mathcal{H}^*} = \|f_\mu\|^2 = \mathcal{E}(\mu)
  \end{equation*}
  and
  \begin{equation*}
    \rho_{\overline{\mu}}(g) = \langle g, f_\mu \rangle \quad \text{ for all } g \in \mathcal{H}.
  \end{equation*}
\end{proposition}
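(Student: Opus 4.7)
The plan is to use the orthogonal basis for $\mathcal{H}$ provided by Lemma \ref{lem:pluriharmonic_RKHS_basic}(b) to reduce all three conditions to an explicit computation in terms of the Taylor moments $\widehat{\mu}(\alpha)$ and $\widecheck{\mu}(\alpha)$, and then propagate the resulting identities from polynomials (where the computation is transparent) to general test functions in $\mathcal{H} \cap C(\overline{\mathbb{B}_d})$ by a two-step approximation.

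\emph{Identifying $f_\mu$ and the energy.} I would first expand $K(z,w)$ as a power series in the monomial basis and integrate termwise against $d\mu(w)$; this is legitimate for each $z \in \mathbb{B}_d$ since the series converges absolutely on compacta in $\mathbb{B}_d \times \overline{\mathbb{B}_d}$. The result is
\begin{equation*}
  f_\mu(z) = \sum_{n \ge 0} a_n \sum_{|\alpha| = n} \tfrac{n!}{\alpha!}\, \widehat{\mu}(\alpha)\, z^\alpha + \sum_{n \ge 1} a_{-n} \sum_{|\alpha| = n} \tfrac{n!}{\alpha!}\, \widecheck{\mu}(\alpha)\, \overline{z}^\alpha.
\end{equation*}
Combining this with the monomial norms from Lemma \ref{lem:pluriharmonic_RKHS_basic}(b) and with the energy formula \eqref{eqn:energy_formula} (letting $r \to 1$ by monotone convergence) yields the identity $\|f_\mu\|^2 = \mathcal{E}(\mu)$ in $[0,\infty]$, so that $f_\mu \in \mathcal{H}$ exactly when the series is summable. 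This settles (i)$\iff$(iii) together with the norm identity for $f_\mu$.

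\emph{The duality and condition (ii).} Evaluating $\rho_{\overline{\mu}}$ on the basis gives $\rho_{\overline{\mu}}(z^\alpha) = \overline{\widehat{\mu}(\alpha)}$ and $\rho_{\overline{\mu}}(\overline{z}^\beta) = \overline{\widecheck{\mu}(\beta)}$, which, matched against the Taylor coefficients of $f_\mu$ from the previous step, yield the identity $\rho_{\overline{\mu}}(p) = \langle p, f_\mu \rangle$ on every polynomial $p$. Applying Parseval to the orthonormal basis $\{e_\alpha, \tilde{e}_\beta\}$ obtained by normalizing Lemma \ref{lem:pluriharmonic_RKHS_basic}(b) then shows that the norm of $\rho_{\overline{\mu}}$ on the dense polynomial subspace equals $\mathcal{E}(\mu)^{1/2}$. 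To lift the polynomial identity to an arbitrary $g \in \mathcal{H} \cap C(\overline{\mathbb{B}_d})$, I would proceed in two stages. First set $g_r(z) = g(rz)$, so that uniform continuity of $g$ on $\overline{\mathbb{B}_d}$ and Lemma \ref{lem:pluriharmonic_RKHS_basic}(c) give $g_r \to g$ simultaneously in sup-norm on $\overline{\mathbb{B}_d}$ and in $\mathcal{H}$-norm. Second, since $g_r$ is pluriharmonic on the neighborhood $\{|z| < 1/r\}$ of $\overline{\mathbb{B}_d}$, the partial sums $p_{r,N}$ of its expansion in the basis of Lemma \ref{lem:pluriharmonic_RKHS_basic}(b) converge to $g_r$ both uniformly on $\overline{\mathbb{B}_d}$ (Taylor convergence inside the domain of pluriharmonicity) and in $\mathcal{H}$-norm (from the basis expansion of $g_r$). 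Passing to $N \to \infty$ and then $r \to 1$ in the polynomial identity transfers it to $g$, establishes the equivalence with (ii) together with $\|\rho_{\overline{\mu}}\|_{\mathcal{H}^*}^2 = \mathcal{E}(\mu)$, and gives the duality $\rho_{\overline{\mu}}(g) = \langle g, f_\mu \rangle$ on the dense subspace; the identity extends to all of $\mathcal{H}$ by continuity.

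The main obstacle is the interleaved approximation in the last step: to transfer the polynomial identity to a continuous test function, I need polynomial approximants that converge to $g$ \emph{simultaneously} in $\mathcal{H}$-norm (to control $\langle \cdot, f_\mu \rangle$) and in sup-norm on $\overline{\mathbb{B}_d}$ (to control $\int \cdot\, d\overline{\mu}$). Neither dilation nor polynomial truncation alone supplies both modes of convergence, so the dilate-then-truncate scheme is essential; once it is in place, the rest of the argument reduces to coefficient bookkeeping.
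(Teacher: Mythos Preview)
Your argument is correct, but it proceeds quite differently from the paper. The paper's proof is organized around a single vector-valued integral identity: since $w \mapsto k_{rw}$ is continuous from $\overline{\mathbb{B}_d}$ into $\mathcal{H}$, the Bochner integral $\int k_{rw}\,d\mu(w)$ converges in $\mathcal{H}$ and equals $(f_\mu)_r$, which immediately yields
\[
  \langle g, (f_\mu)_r \rangle = \int_{\overline{\mathbb{B}_d}} g_r \, d\overline{\mu} \qquad \text{for every } g \in \mathcal{H}.
\]
From this one identity all three equivalences and both norm formulas drop out by choosing $g = (f_\mu)_r$ and then letting $r \to 1$ (using Lemma~\ref{lem:pluriharmonic_RKHS_basic}(c) on one side and uniform continuity of $g$ on the other). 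In particular only one limiting passage is needed to reach a general $g \in \mathcal{H} \cap C(\overline{\mathbb{B}_d})$. Your route instead unpacks everything through the orthogonal basis: the identity $\rho_{\overline{\mu}}(p) = \langle p, f_\mu \rangle$ is first established only for monomials by direct coefficient matching, and you then need the two-stage dilate-then-truncate approximation to bridge to continuous test functions. What you gain is that the argument is entirely elementary (no Bochner integral, just Parseval and power-series manipulations); what you lose is concision, since the paper's single identity subsumes both of your approximation steps. Both approaches are standard in the RKHS literature, and the paper's version is closer in spirit to the proof of \cite[Proposition 3.2]{CH20} on which it is modeled.
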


Notice that if $\mu = \delta_w$ is a Dirac measure at some $w \in \mathbb{B}_d$, then $\rho_{\overline{\mu}}$
is the functional of evaluation at $w$ and $f_\mu$ is the reproducing kernel at $w$.

\begin{proof}
  We argue as in \cite[Proposition 3.2]{CH20}.
  Let $f = f_\mu$ and $k_w(z) = K(z,w)$.
  Let $0 \le r < 1$.
  Since the function
  \begin{equation*}
    \overline{\mathbb B_d} \to \mathcal{H}, \quad w \mapsto k_{r w},
  \end{equation*}
  is continuous, 
  the integral
  \begin{equation*}
    \int_{\overline{\mathbb B_d}} k_{r w} d \mu(w)
  \end{equation*}
  converges in $\mathcal{H}$, and its value at $z$ equals $f_\mu(r z)$, hence $f_r \in \mathcal{H}$.
  Moreover, if $g \in \mathcal{H}$, then
  \begin{equation}
    \label{eqn:energy_fa}
    \langle g,f_r \rangle = \int_{\overline{\mathbb B_d}} \langle g, k_{r w} \rangle \, d \overline{\mu}(w)
    = \int_{\overline{\mathbb B_d}} g_r \, d \overline{\mu}.
  \end{equation}

  (iii) $\Rightarrow$ (ii) Assume $f \in \mathcal{H}$. Then by Lemma \ref{lem:pluriharmonic_RKHS_basic},
  we have $\lim_{r \to 1} f_r = f$ in $\mathcal{H}$. Thus,
  for each $g \in \mathcal{H} \cap C( \overline{\mathbb{B}_d})$, we may take the limit $r \to 1$
  in Equation \eqref{eqn:energy_fa} to conclude that
  $\langle g,f \rangle = \int_{\overline{\mathbb B_d}} g \, d \overline{\mu}$.
  Hence (ii) holds by the Cauchy--Schwarz inequality. This argument also establishes
  the final statement of the proposition and the equality $\|\rho_{\overline{\mu}}\|_{\mathcal{H}^*} = \|f_\mu\|$.

  (ii) $\Rightarrow$ (iii)  Choosing $g = f_r$ in Equation \eqref{eqn:energy_fa}
  and using Lemma \ref{lem:pluriharmonic_RKHS_basic} (c), we find that
  \begin{equation*}
    \|f_r\|^2 \le C \|f_{r^2}\| \le C \|f_r\|,
  \end{equation*}
  so $\|f_r\| \le C$ for all $r \in [0,1)$, hence $f \in \mathcal{H}$ by Lemma \ref{lem:pluriharmonic_RKHS_basic} (c).

  (i) $\Leftrightarrow$ (iii)
  Choosing $g = f_r$ in Equation \eqref{eqn:energy_fa}, we find that
  \begin{equation*}
    \|f_r\|^2 = \int_{\overline{\mathbb B_d}} f(r^2 z) d \overline{\mu}(z)
    = \int_{\overline{\mathbb B_d}} \int_{\overline{\mathbb B_d}} K(r z, r w)
    d \mu(w)
      d \overline{\mu}(z) = \mathcal{E}_r(\mu).
  \end{equation*}
  Taking the supremum over $r$ and recalling Lemma \ref{lem:pluriharmonic_RKHS_basic} (c) yields
  the equivalence of (i) and (iii) and the the equality $\mathcal{E}(\mu) = \|f\|^2$.
\end{proof}

We now consider some examples in the Drury--Arveson space.
In particular, we see that our capacity is different
from that developed in \cite{AC89} using the modulus of the kernel.
\begin{example}
  \label{exa:real_part_modulus}
  \begin{enumerate}[label=\normalfont{(\alph*)},wide]
    \item 
    A measure $\mu \in P(\partial \mathbb B_d)$ is called a \emph{representing measure
    for $0$} if
    \[
        \int_{\partial \mathbb B_d} f \, d \mu = f(0)
    \]
    for all continuous functions $f$ on $\overline{\mathbb B_d}$ that are holomorphic
    on $\mathbb B_d$. It follows from Proposition \ref{prop:energy_fa} that every
    representing measure for $0$ has finite energy for $H^2_d$.
    For example, let $\mu$ be the one-dimensional Lebesgue measure on the circle
    \[
    C = \{(z,0):z \in   \partial \mathbb{D}\}.
    \]
    Formally, $\mu$ is the pushforward of normalized Lebesgue measure $m$ on $\partial \mathbb{D}$ by
    the map $\iota: z \mapsto (z,0)$.
    Then $\mu$ is a representing measure for $0$ and hence has finite energy.
    In particular, $\ccap_{H^2_d}(C) > 0$.
  \item Example \ref{exa:basic_capacities} (c) shows that the capacity
    of a subset of the circle $C$ of part (a) is essentially given
    by the one dimensional Lebesgue measure.
  \item
    If one defines energy and capacity using $|K_{H^2_d}|$
    (instead of $K_{H^2_d}$ itself or $\Re K_{H^2_d}$), then one finds
    for the measure $\mu$ of part (a) that
    \begin{align*}
       \int_{\partial \mathbb B_d} \int_{\partial \mathbb B_d}
        |K_{H^2_d}(r z, r w)| \, d \mu(w) d \mu(z)
        &=  \int_{0}^{2 \pi} \int_{0}^{2 \pi}
        \frac{1}{|1 - r^2 e^{i (t - s)}|} \frac{ds}{2 \pi}  \frac{ dt}{2 \pi} \\
        &= \int_{0}^{2 \pi} \frac{1}{|1 - r^2 e^{ i t }|} \frac{dt}{2 \pi}
    \end{align*}
    for $0 \le r < 1$.
    By Fatou's lemma, it follows that
    \begin{equation*}
      \sup_{0 \le r < 1} \int_{\partial \mathbb B_d} \int_{\partial \mathbb B_d}
        |K_{H^2_d}(r z, r w)| \, d \mu(w) d \mu(z)
      \ge \int_{0}^{2 \pi} \frac{1}{|1 - e^{it}|} \frac{dt}{2 \pi} = \infty.
    \end{equation*}
    This shows that $\mu$ has infinite energy with respect to $|K_{H^2_d}|$.

  \item
    Expanding on (c), let $\nu \in P(C)$. Then $\nu = \iota_* \tau$ for
    some $\tau \in P(\partial \mathbb{D})$, and if $0 \le r < 1$, then
    \begin{equation*}
       \int_{\partial \mathbb B_d} \int_{\partial \mathbb B_d}
       |K_{H^2_d}(rz , rw)| \, d \nu(w) d \nu(z)
       = \int_{\partial \mathbb{D}} \int_{\partial \mathbb{D}}
       \frac{1}{|1 - r^2 \xi \overline{\eta}|} d \tau(\eta) d \tau(\xi).
    \end{equation*}
    Let $h: \partial \mathbb{D} \to \mathbb{C}, h(\xi) = |1 - r^2 \xi|^{-1}$.
    The representation 
    \begin{equation*}
      h(\xi) = (1 - r^2 \xi)^{-1/2} \overline{(1 - r^2 \xi)^{-1/2}}
    \end{equation*}
    shows that $h$ has non-negative Fourier coefficients $\widehat{h}(n)$,
    as $\xi \mapsto (1 - r^2 \xi)^{-1/2}$ has non-negative Fourier coefficients.
    Therefore, expanding the integrand above in a series and using that
    $\tau$ is a probability measure,
    we find that
    \begin{equation}
      \label{eqn:ineq_energy}
      \begin{split}
       \int_{\partial \mathbb B_d} \int_{\partial \mathbb B_d}
       |K_{H^2_d}(rz , rw)| \, d \nu(w) d \nu(z)
       &= \sum_{n \in \mathbb{Z}} \widehat{h}(n) |\widehat{\tau}(n)|^2 \\
       &\ge \widehat{h}(0) \\
       &= \int_{0}^{2 \pi} \frac{1}{|1 - r^2 e^{ it}|} \, dt.
      \end{split}
    \end{equation}
    As in (c), we conclude that
    \begin{equation*}
      \sup_{0 \le r < 1}
       \int_{\partial \mathbb B_d} \int_{\partial \mathbb B_d}
       |K_{H^2_d}(rz , rw)| \, d \nu(w) d \nu(z) = \infty.
    \end{equation*}
    Thus, $C$ has capacity zero with respect to $|K_{H^2_d}|$.
    (Inequality \eqref{eqn:ineq_energy} can also be understood
    in the context of rotionally invariant reproducing kernel
    Hilbert spaces: non-negativity of the Fourier coefficients of $h$
    corresponds to positive semi-definiteness of the kernel $|K_{H^2_d}|$;
    the right-hand side of \eqref{eqn:ineq_energy} yields the energy
    of the measure $\mu$ with respect to the subspace of homogeneous functions of degree $0$;
    see for instance \cite[Lemma 6.2]{Hartz17a}).
  \item
    Let $d = 2$.
    We consider a class of subsets of $\partial \mathbb{B}_2$ that
    was already studied by Ahern and Cohn; see \cite[Theorem 2.11]{AC89}.
    and also \cite[Example 6.6]{Clouatre2016}.
    Let $E \subset \partial \mathbb{D}$ be compact, let
    \begin{equation*}
    h: \partial \mathbb{D} \times E \to \partial \mathbb{B}_2, \quad (\zeta_1,\zeta_2) \mapsto 2^{-1/2} (\zeta_1, \overline{\zeta_1} \zeta_2)
    \end{equation*}
    and let $F = h(\partial \mathbb{D} \times E) \subset \partial \mathbb{B}_2$.
    We claim that 
    \begin{equation}
      \label{eqn:cap_DA_Dirichlet}
      \ccap_{H^2_2}(F) = \ccap_{\mathcal{D}_{1/2}}(E),
    \end{equation}
    where $\mathcal{D}_{1/2}$ the the weighted Dirichlet
    space mentioned in Example \ref{exa:basic_capacities} (b).
    In particular, taking $E = \{1\}$, we find that
    the circle
    \begin{equation*}
      \{ 2^{-1/2} (z, \overline{z}): z \in \partial \mathbb{D} \} \subset \partial \mathbb{B}_2
    \end{equation*}
    has $H^2_2$-capacity zero, since the Dirac measure $\delta_1$ has infinite energy for $\mathcal{D}_{1/2}$.
    Thus, Examples (b) and (e) taken together show that our capacity
    is sensitive to the structure of $\partial{\mathbb{B}_d}$ as a CR manifold.

    To prove \eqref{eqn:cap_DA_Dirichlet},
    we use some known relationships between $\mathcal{D}_{1/2}$ and $H^2_2$,
    which for instance can be found in \cite{Hartz17}.
    Let $r(z_1,z_2) =2 z_1 z_2$. Then $r(F) = E$,
    so the push-forward $r_* \mu$ is a probability measure
    supported on $E$. If $g \in \mathcal{D}_{1/2}$,
    then $g \circ r \in H^2_2$ with $\|g \circ r\|_{H^2_2}
    = \|g\|_{\mathcal{D}_{1/2}}$ (for a proof,
    see for instance \cite[Lemma 4.1]{Hartz17}).
    Thus, assuming that $g$ is additionally continuous on $\overline{\mathbb{D}}$,
    it follows from Proposition \ref{prop:energy_fa} that
    \begin{equation*}
      \Big| \int_E g \, d (r_* \mu) \Big|
      = \Big| \int_F (g \circ r) \,d \mu \Big|
      \le \mathcal{E}_{H^2_2}^{1/2} (\mu) \|g \circ r \|_{H^2_2}
      = \mathcal{E}_{H^2_2}^{1/2} (\mu) \|g \|_{\mathcal{D}_{1/2}}.
    \end{equation*}
    Again by Proposition \ref{prop:energy_fa}, we find that $\mathcal{E}_{\mathcal{D}_{1/2}} (r_* \mu)
    \le \mathcal{E}_{H^2_2}(\mu)$.
    By definition of the capacities, we conclude that $\ccap_{H^2_2}(F) \le \ccap_{\mathcal{D}_{1/2}}(E)$.

    Conversely, let $\sigma$ be a $\mathcal{D}_{1/2}$-equilibrium measure for $E$
    and let $g \in \mathcal{D}_{1/2}$ be the holomorphic potential of $\sigma$.
    Let $\mu = h_*(m \otimes \sigma)$, where $m$ is the normalized
    Lebesgue measure on $\partial \mathbb{D}$.
    The proof of \cite[Lemma 4.6]{Hartz17} and Proposition \ref{prop:energy_fa} show that
    \begin{equation*}
      \int_{\partial \mathbb{B}_2} p \, d \mu = \langle p, g \circ r \rangle_{H^2_2}
    \end{equation*}
    for all polynomials $p$. Hence
    \begin{equation*}
      \mathcal{E}_{H^2_2}(\mu) = \|g \circ r\|_{H^2_2}^2 = \|g\|_{\mathcal{D}_{1/2}}^2
      = \mathcal{E}_{\mathcal{D}_{1/2}}(\sigma) = \ccap_{\mathcal{D}_{1/2}}(E)^{-1}.
    \end{equation*}
    This shows the remaining inequality in \eqref{eqn:cap_DA_Dirichlet}.
  \end{enumerate}
\end{example}

\section{Unboudnedness sets}
\label{sec:unboundedness}

The goal of this section is to prove Theorem \ref{thm:unboundedness_intro}.
We will make use of the abstract theory of capacity that we develop in the appendix;
it allows for a uniform treatment.

For the rest of the section, let $\mathcal H$ be a unitarily invariant space
of plurihamornic functions whose reproducing kernel $K$ has non-negative real part.
We write $k = \Re K$.
For $0 \le r < 1$, we introduce the mixed $r$-energy of two positive measures to be 
\[ \mathcal{E}_r(\mu,\nu)=\int_{\overline{\mathbb{B}}_d}\int_{\overline{\mathbb{B}}_d} k(rz,rw) d\mu(w) d\nu(z). \]
We also write $\mathcal E_r(\mu) = \mathcal{E}_r(\mu,\mu)$.
If $\mathcal{E}(\mu) < \infty$ and $\mathcal{E}(\nu)<\infty$,
then Proposition \ref{prop:energy_fa} shows that
\[
    \mathcal E_r(\mu,\nu)
    = \Re \int_{\overline{\mathbb B_d}} (f_\mu)_{r^2} \, d \nu
    = \Re \langle (f_{\mu})_{r^2}, f_{\nu} \rangle,
\]
so the limit
\[
    \mathcal E(\mu,\nu) =
    \lim_{r\to 1}\mathcal{E}_r(\mu,\nu)
    = \Re
    \langle f_\mu, f_{\nu} \rangle
\]
exists by Lemma \ref{lem:pluriharmonic_RKHS_basic} (c).
Observe that $\mathcal E(\mu,\mu) = \mathcal E(\mu)$, so the new definition
is consistent with the old one.

\begin{lemma}
  Let $0\le r<1$. In the sense of Definition \ref{defn:energy_functional} the energy $\mathcal{E}_r$ is an energy functional on the cone $M_+(\overline{\mathbb B_d})$, while $\mathcal{E}$ is an energy functional on the convex cone of measures satisfying $\mathcal{E}(\mu)<\infty$.
\end{lemma}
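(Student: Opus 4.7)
The plan is to verify, for each of the functionals $\mathcal E_r$ and $\mathcal E$, the axioms of Definition \ref{defn:energy_functional}, namely positive $2$-homogeneity, convexity, and weak-$*$ lower semicontinuity, together with the existence of a symmetric bilinear polarization. The two engines doing all the work will be positive semidefiniteness of the rescaled kernel $k(rz,rw)$ (for $\mathcal E_r$) and the Hilbert space identification provided by Proposition \ref{prop:energy_fa} (for $\mathcal E$).

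First I would observe that $k = \Re K$ is itself a positive semidefinite kernel: since $K$ is a reproducing kernel we have $\overline{K(z,w)} = K(w,z)$, and the latter defines a positive semidefinite kernel (the conjugate RKHS), hence so does $\Re K = \tfrac12 (K + \overline K)$. Consequently, for every $r \in [0,1)$ the rescaled kernel $k_r(z,w) := k(rz,rw)$ is positive semidefinite, continuous, and bounded on the compact set $\overline{\mathbb B_d} \times \overline{\mathbb B_d}$. Nonnegativity and positive $2$-homogeneity of $\mathcal E_r$ on $M_+(\overline{\mathbb B_d})$ are then immediate. Positive semidefiniteness of $k_r$ yields the Cauchy--Schwarz inequality $\mathcal E_r(\mu,\nu)^2 \le \mathcal E_r(\mu)\mathcal E_r(\nu)$ for the symmetric bilinear polarization, and combining this with the expansion
\[
\mathcal E_r(t\mu + (1-t)\nu) = t^2 \mathcal E_r(\mu) + 2 t(1-t)\mathcal E_r(\mu,\nu) + (1-t)^2 \mathcal E_r(\nu)
\]
and the AM--GM inequality gives convexity of $\mathcal E_r$ on $M_+$. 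Finally, continuity and boundedness of $k_r$ on the compact product space imply that the bilinear form $(\mu,\nu) \mapsto \int \int k_r \, d\mu\, d\nu$ is weak-$*$ continuous on bounded subsets of $M_+(\overline{\mathbb B_d})$, which gives weak-$*$ lower semicontinuity of $\mathcal E_r$ on the full cone.

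For $\mathcal E$, I would first argue that the set $C := \{\mu \in M_+(\overline{\mathbb B_d}) : \mathcal E(\mu) < \infty\}$ is a convex cone. Homogeneity is clear, and if $\mathcal E(\mu), \mathcal E(\nu) < \infty$ then by Proposition \ref{prop:energy_fa} we have $f_\mu, f_\nu \in \mathcal H$, so by linearity of $\mu \mapsto f_\mu$ one has $f_{\alpha \mu + \beta \nu} = \alpha f_\mu + \beta f_\nu \in \mathcal H$ for $\alpha,\beta \ge 0$, whence $\mathcal E(\alpha\mu + \beta\nu) < \infty$. The remaining axioms transfer directly from the $\mathcal E_r$: since $\mathcal E = \sup_{0 \le r < 1} \mathcal E_r$ is a supremum of positively $2$-homogeneous, convex, and lower semicontinuous functionals, it inherits each of these properties on $C$. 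The required bilinear polarization is supplied by the identity $\mathcal E(\mu,\nu) = \Re \langle f_\mu, f_\nu \rangle_{\mathcal H}$ derived immediately before the lemma, from which the quadratic-form identities and Cauchy--Schwarz are inherited at once from the Hilbert space structure of $\mathcal H$.

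The main obstacle is purely one of bookkeeping: matching these properties to the exact wording of Definition \ref{defn:energy_functional} in the appendix. If lower semicontinuity of $\mathcal E$ on its natural domain is part of the axioms, one may additionally appeal to the monotone convergence identity $\mathcal E(\mu) = \lim_{r \nearrow 1} \mathcal E_r(\mu)$, which holds because $r \mapsto \mathcal E_r(\mu)$ is increasing by \eqref{eqn:energy_formula}, combined with the weak-$*$ lower semicontinuity of each $\mathcal E_r$ already established.
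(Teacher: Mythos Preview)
Your approach is essentially the same as the paper's: for $\mathcal E_r$ you use continuity of the rescaled kernel on the compact product to get weak-$*$ continuity of the double integral, and for $\mathcal E$ you pass to the supremum over $r$. Two small bookkeeping points: the axioms in Definition~\ref{defn:energy_functional} are (A1) symmetry, (A2) bilinearity, and (A3) lower semicontinuity of the \emph{bilinear} form $I(\cdot,\cdot)$, not convexity of the quadratic form---your ``symmetric bilinear polarization'' is exactly what is needed, so the convexity detour is unnecessary; and the definition also requires the cone to be closed under restriction to compact sets, which you omit but which follows immediately from $\Re K \ge 0$ (so $\mathcal E_r(\chi_F\,d\mu) \le \mathcal E_r(\mu)$).
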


\begin{proof}
    The characterization in part (ii) of Proposition \ref{prop:energy_fa} shows that
    \[
        C = \{\mu \in M_+(\mathbb B_d): \mathcal E(\mu) < \infty \}
    \]
    is indeed a convex cone. It is also clear that $C$ is closed
    under restrictions to compact subsets of $\overline{\mathbb B_d}$.
    
    Properties $(A1)$ and $(A2)$ are clear for both energies. Notice that the kernel $k(r\cdot,r\cdot)$ is continuous on $\overline{\mathbb{B}_d} \times \overline{\mathbb{B}_d}$. If $\mu_n$ converges weak* to $\mu$ then the product measure $\mu_n \otimes \mu_n$ converges weak* to $\mu \otimes \mu $; thus $\lim_n \mathcal{E}_r(\mu_n) = \mathcal{E}_r(\mu)$. It follows that $\mathcal{E}_r$ satisfies also $(A3).$ 

    Furthermore notice that by Equation \eqref{eqn:energy_formula},
    we have
    \[ \liminf_{n}\mathcal{E}(\mu_n) \geq \lim_n\mathcal{E}_r(\mu_n) = \mathcal{E}_r(\mu). \]
    Since this holds for every $r<1$ we conclude that also $\mathcal{E}$ satisfies $(A3)$.
\end{proof}

Since the underlying kernel is understood by the context we shall denote by $\ccap$ the capacity generated by the energy functional $\mathcal{E}$ and by $\ccap_r$ the \emph{$r$-capacity} generated by the energy functional $\mathcal{E}_r$.

\begin{lemma}\label{lem:c_r} 
For a compact set $F\subset  {\overline{\mathbb{B}}_d}$ it holds that \[\lim_{r\to 1}\ccap_r(F) = \ccap(F).\]
\end{lemma}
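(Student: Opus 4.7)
The plan is to show both inequalities. For the easy direction, the explicit formula \eqref{eqn:energy_formula} expresses $\mathcal{E}_r(\mu)$ as a series with non-negative terms, each of which is non-decreasing in $r$. Hence for every fixed $\mu$, $r \mapsto \mathcal{E}_r(\mu)$ is non-decreasing and bounded above by $\mathcal{E}(\mu)$, which forces $r \mapsto \ccap_r(F)$ to be non-increasing and $\ccap_r(F) \ge \ccap(F)$ for every $r \in [0,1)$. Therefore the limit $L := \lim_{r \to 1} \ccap_r(F)$ exists and satisfies $L \ge \ccap(F)$.

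For the reverse inequality one uses weak-* compactness. Assuming $L > 0$ (otherwise the first half already gives equality), choose a sequence $r_n \to 1$ and approximate equilibrium measures $\mu_n \in P(F)$ with $\mathcal{E}_{r_n}(\mu_n) \le (\ccap_{r_n}(F) - 1/n)^{-1}$, so that $\limsup_n \mathcal{E}_{r_n}(\mu_n) \le 1/L$. Since $F$ is compact, $P(F)$ is weak-* compact, and after passing to a subsequence we may assume $\mu_n \to \mu$ weak-* with $\mu \in P(F)$.

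The decisive step is to bound $\mathcal{E}(\mu)$ by $1/L$. Fix $s \in [0,1)$. Because the kernel $k(sz,sw)$ is continuous on $\overline{\mathbb B_d} \times \overline{\mathbb B_d}$, the product measures $\mu_n \otimes \mu_n$ converge weak-* to $\mu \otimes \mu$, so $\mathcal{E}_s$ is weak-* continuous on $P(F)$; in particular $\mathcal{E}_s(\mu) = \lim_n \mathcal{E}_s(\mu_n)$. Once $r_n \ge s$, the monotonicity established above gives $\mathcal{E}_s(\mu_n) \le \mathcal{E}_{r_n}(\mu_n)$, so letting $n \to \infty$ yields $\mathcal{E}_s(\mu) \le \limsup_n \mathcal{E}_{r_n}(\mu_n) \le 1/L$. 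Finally, letting $s \nearrow 1$ and invoking the monotone limit $\mathcal{E}(\mu) = \sup_{s<1} \mathcal{E}_s(\mu)$ yields $\mathcal{E}(\mu) \le 1/L$, hence $\ccap(F) \ge 1/\mathcal{E}(\mu) \ge L$, as required.

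The one delicate point is the interchange of limits in $n$ and $s$: because $\mathcal{E}$ itself is only lower semi-continuous (and need not be weak-* continuous) one cannot work with $\mathcal{E}(\mu_n)$ directly, but the two-parameter family $\mathcal{E}_s$ with continuous cutoff kernel bypasses this obstacle. Everything else is routine topology and elementary monotonicity.
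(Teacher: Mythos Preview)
Your proof is correct and follows essentially the same route as the paper: monotonicity of $\mathcal{E}_r$ in $r$ for the easy inequality, then weak-$*$ compactness of $P(F)$ to extract a limit measure from (approximate) $r_n$-equilibrium measures, combined with weak-$*$ continuity of each $\mathcal{E}_s$ and the monotone comparison $\mathcal{E}_s \le \mathcal{E}_{r_n}$ for $r_n \ge s$. The only cosmetic difference is that the paper uses exact $r$-equilibrium measures (whose existence follows from continuity of the cutoff kernel and (A3)) rather than approximate ones, which streamlines the bookkeeping slightly but changes nothing substantive.
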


\begin{proof}
First notice that, since for every measure $\mu$, the energy $\mathcal{E}_r(\mu)$ is increasing in $r$, the limit $\lim_{r\to 1}\ccap_r(F)$ exists and 
$\ccap(F)\leq \lim_{r \to 1} \ccap_r(F)$.

For the other inequality, let $\mu_r$ be a probability measure supported in $F$ such that $\mathcal{E}_r(\mu_r)=\ccap_r(F)^{-1}$.  Such measures exist as a consequence of property $(A3)$ of energy functionals and weak* sequential compactness of $P(F)$.
Let $(r_n)$ be a sequence tending to $1$ such that $\mu_{r_n}$ tends weak* to
a measure $\mu \in P(F)$.
Let $0 <r < 1$.
Since $\mathcal E_{r}(\mu_{r_n}) \le \mathcal E_{r_n} (\mu_{r_n})$ for sufficiently large $n$, we have
\[ \mathcal{E}_r(\mu) =  \lim_n\mathcal{E}_{r}(\mu_{r_n}) \leq \lim_n\mathcal{E}_{r_n}(\mu_{r_n}) = \lim_n \ccap_{r_n}(F)^{-1}  = \lim_{r \to 1} \ccap_r(F)^{-1}.\]
By taking the limit in $r$ we find that 
\[ \lim_{r\to 1 }\ccap_r(F) \leq \frac{1}{\mathcal{E}(\mu)} \leq \ccap(F), \]
establishing the other inequality.
\end{proof}

\begin{lemma}\label{prop:potential_bound_below}
 Let $F \subset \overline{\mathbb B_d}$ be a non empty compact set, $0<r<1$ and $\mu$ an $r$-equilibrium measure of $F$.
 Define
\[ f(z) = \int_{\overline{\mathbb{B}}_d} K(rz,rw) d\mu(w). \]
Then,
\begin{enumerate}[label=\normalfont{(\alph*)}]
    \item $\Re f(z) \ge \ccap_r(F)^{-1}$ for all $z \in F$;
    \item $\|f\|_{\mathcal H}^2 \le \ccap_r(F)^{-1}$.
\end{enumerate}
\end{lemma}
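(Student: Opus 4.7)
The plan is to handle parts (a) and (b) with two different techniques: a standard Frostman-type variational argument for (a), and a push-forward trick reducing to Proposition \ref{prop:energy_fa} for (b).

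For (a), I would argue by contradiction. Suppose $\Re f(z_0) < \ccap_r(F)^{-1}$ for some $z_0 \in F$. For $\epsilon \in (0,1)$, the convex combination $\mu_\epsilon = (1-\epsilon)\mu + \epsilon \delta_{z_0}$ is again a probability measure on $F$, and bilinearity of $\mathcal{E}_r(\cdot,\cdot)$ gives
\[
  \mathcal{E}_r(\mu_\epsilon) = (1-\epsilon)^2 \mathcal{E}_r(\mu) + 2\epsilon(1-\epsilon)\, \Re f(z_0) + \epsilon^2\, k(rz_0, rz_0),
\]
where I used $\mathcal{E}_r(\mu, \delta_{z_0}) = \int k(rz_0, rw)\,d\mu(w) = \Re f(z_0)$, and $k(rz_0, rz_0) < \infty$ since $rz_0 \in \mathbb{B}_d$. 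Differentiating at $\epsilon = 0^+$ yields $2\bigl[\Re f(z_0) - \ccap_r(F)^{-1}\bigr] < 0$, so $\mathcal{E}_r(\mu_\epsilon) < \mathcal{E}_r(\mu)$ for small $\epsilon > 0$, contradicting the minimality of $\mu$.

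For (b), I would push $\mu$ forward to the measure $\nu = (r\cdot)_{\ast}\mu$, supported on the compact set $rF \subset \mathbb{B}_d$, and then apply Proposition \ref{prop:energy_fa}. Using the series representation \eqref{eqn:energy_formula} and monotone convergence (valid because the coefficients $a_n$ are non-negative), one finds
\[
  \mathcal{E}(\nu) = \sup_{0 \le s < 1} \mathcal{E}_{sr}(\mu) = \mathcal{E}_r(\mu) = \ccap_r(F)^{-1}.
\]
Hence the function
\[
  f_\nu(z) = \int K(z, w)\, d\nu(w) = \int K(z, rw)\, d\mu(w)
\]
lies in $\mathcal{H}$ with $\|f_\nu\|_{\mathcal{H}}^2 = \ccap_r(F)^{-1}$ by Proposition \ref{prop:energy_fa}. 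Since $f(z) = f_\nu(rz) = (f_\nu)_r(z)$, Lemma \ref{lem:pluriharmonic_RKHS_basic}(c) gives $\|f\|_{\mathcal{H}} \le \|f_\nu\|_{\mathcal{H}}$, which is the desired bound.

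Both arguments are quite mechanical; the one mild point of care is in (b), where one must identify the supremum defining $\mathcal{E}(\nu)$ (taken over dilates $s \in [0,1)$ of $\nu$) with the fixed $r$-energy of $\mu$. This reduces to the observation that $sr \nearrow r$ together with the monotonicity and continuity of $\mathcal{E}_t(\mu)$ in $t$, both of which are read off directly from \eqref{eqn:energy_formula} and the non-negativity of the $a_n$.
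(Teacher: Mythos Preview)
Your proof is correct and essentially matches the paper's. For (a), the paper simply cites (P6) of Proposition~\ref{prop:abstract_capacity} with $\nu=\delta_z$, and the proof of (P6) is precisely the variational argument you wrote out; you have just inlined it. For (b), the paper instead introduces the auxiliary space $\mathcal H_r$ with kernel $(z,w)\mapsto K(rz,rw)$, observes that $\mathcal H_r$ is contractively contained in $\mathcal H$ (since $K-K_r$ has non-negative power series coefficients), and then applies Proposition~\ref{prop:energy_fa} to $\mathcal H_r$ to get $\|f\|_{\mathcal H}^2 \le \|f\|_{\mathcal H_r}^2 = \mathcal E_r(\mu)$. Your pushforward argument is a concrete incarnation of the same fact: writing $f=(f_\nu)_r$ and invoking Lemma~\ref{lem:pluriharmonic_RKHS_basic}(c) is exactly the statement that radial dilation contracts the $\mathcal H$-norm, which is equivalent to the contractive inclusion $\mathcal H_r\hookrightarrow\mathcal H$. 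Your version has the minor advantage of avoiding the introduction of a new RKHS, at the cost of the small continuity check $\sup_{s<1}\mathcal E_{sr}(\mu)=\mathcal E_r(\mu)$ that you correctly flagged.
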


\begin{proof}
Note that $\ccap_r(F) \neq 0$ as $F$ is not empty.

(a)
Let $z \in F$ and let $\delta_z$ be the Dirac measure at $z$.
Then by $(P6)$ of Proposition \ref{prop:abstract_capacity} we have 
\[
\ccap_r(F)^{-1}  = \mathcal{E}_r(\mu) \leq \mathcal{E}_r(\mu,\delta_z)
= \Re f(z).
\]

(b)
Let $\mathcal H_r$ be the space
with reproducing kernel $(z,w) \mapsto K(rz,rw)$.
Then $\mathcal H_r$ is contractively contained in $\mathcal H$,
so applying Proposition \ref{prop:energy_fa} to $\mathcal H_r$,
we find that
\[
    \|f\|_{\mathcal H}^2 \le \|f\|_{\mathcal H_r}^2
    = \mathcal E_r(\mu) = \ccap_r(F)^{-1},
\]
establishing (b).
\end{proof}

The following result in particular establishes Theorem \ref{thm:unboundedness_intro}.

\begin{theorem}
  \label{thm:cap_0_unboundedness}
  Let $\mathcal{H}$ be a unitarily invariant space
  of pluriharmonic functions on $\mathbb{B}_d$ whose kernel has non-negative real part.
  Let $E \subset \partial \mathbb{B}_d$ be compact with $\ccap_{\mathcal{H}}(E) = 0$.
  Then there exists $f \in \mathcal{H}$ with $\lim_{r \nearrow 1} |f(r \zeta)| = \infty$
  for all $\zeta \in E$.
\end{theorem}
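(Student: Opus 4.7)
The plan is the classical ``series-of-potentials'' construction, built from the $r$-equilibrium measures of Lemma \ref{prop:potential_bound_below}. We may assume $E$ is non-empty; otherwise there is nothing to prove. By hypothesis $\ccap(E) = 0$, so Lemma \ref{lem:c_r} yields $\ccap_r(E) \to 0$ as $r \uparrow 1$. The plan is to choose a sequence $r_n \uparrow 1$ with $c_n := \ccap_{r_n}(E)^{-1} \ge 4^n$, pick an $r_n$-equilibrium measure $\mu_n \in P(E)$, and set
\[
  g_n(z) = \int_{\overline{\mathbb B_d}} K(r_n z, r_n w) \, d\mu_n(w).
\]
Because $(z,w) \mapsto K(r_n z, r_n w)$ is continuous on $\overline{\mathbb B_d} \times \overline{\mathbb B_d}$, each $g_n$ extends continuously to $\overline{\mathbb B_d}$. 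From Lemma \ref{prop:potential_bound_below} we get $\Re g_n \ge c_n$ on $E$ and $\|g_n\|_{\mathcal H}^2 \le c_n$, while positivity of $\Re K$ and of $\mu_n$ forces $\Re g_n \ge 0$ on the entire closed ball.

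The candidate unbounded function is $f := \sum_{n=1}^\infty \alpha_n g_n$ with $\alpha_n = (n^2 c_n^{1/2})^{-1}$. The triangle inequality gives
\[
  \sum_n \alpha_n \|g_n\|_{\mathcal H} \le \sum_n \alpha_n c_n^{1/2} = \sum_n n^{-2} < \infty,
\]
so the series converges absolutely in $\mathcal H$ to a function $f \in \mathcal H$. Continuity of point evaluation on $\mathcal H$ then allows me to write, for any $\zeta \in E$ and $0 \le r < 1$,
\[
  \Re f(r \zeta) = \sum_n \alpha_n \Re g_n(r \zeta),
\]
a sum of non-negative terms. Continuity of each $g_n$ up to the boundary gives $\lim_{r \to 1} \Re g_n(r\zeta) = \Re g_n(\zeta) \ge c_n$, and Fatou's lemma for counting measure on $\mathbb{N}$ yields
\[
  \liminf_{r \to 1} \Re f(r\zeta) \ge \sum_n \alpha_n \Re g_n(\zeta) \ge \sum_n \alpha_n c_n = \sum_n \frac{c_n^{1/2}}{n^2} = +\infty.
\]
A real-valued $\liminf$ equal to $+\infty$ is already a genuine limit, so $\Re f(r\zeta) \to +\infty$ and consequently $|f(r\zeta)| \ge \Re f(r\zeta) \to \infty$ for every $\zeta \in E$.

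The decisive use of the hypothesis $\Re K \ge 0$ is in securing $\Re g_n \ge 0$ throughout $\overline{\mathbb B_d}$; this is the step I expect to need the most care, since without global non-negativity the Fatou step collapses and individual terms in the series could cancel out at interior points. The remaining ingredients --- existence of the $r_n$-equilibrium measures, continuity of the $g_n$ on the closed ball, and the potential-theoretic bounds in Lemma \ref{prop:potential_bound_below} --- are already in place, so the rest of the argument reduces to balancing $\alpha_n$ against $c_n$ to achieve simultaneous $\mathcal H$-convergence and boundary divergence, which the choice $\alpha_n = (n^2 c_n^{1/2})^{-1}$ does with room to spare.
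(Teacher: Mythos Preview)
Your proposal is correct and follows essentially the same approach as the paper: build a series of $r$-equilibrium potentials via Lemmas \ref{lem:c_r} and \ref{prop:potential_bound_below}, choose coefficients so the series converges in $\mathcal H$ while the real parts blow up on $E$, and invoke Fatou's lemma for the counting measure (using $\Re K \ge 0$ to guarantee non-negativity of the summands). The only difference is cosmetic bookkeeping: the paper normalizes each potential so that $\Re f_n \ge 1$ on $E$ with $\|f_n\|^2 \le \ccap_{r_n}(E)$, and then selects $r_n$ with $\sum_n \ccap_{r_n}(E)^{1/2} < \infty$, whereas you keep the potentials unnormalized and absorb the balancing into the coefficients $\alpha_n = (n^2 c_n^{1/2})^{-1}$.
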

 
\begin{proof}
  Lemma \ref{lem:c_r} shows that $\lim_{r\to 1}\ccap_r(F) = 0$. Therefore we can find a sequence $r_n\to 1 $ such that 
\[ \sum_{n=1}^\infty (\ccap_{r_n}(F))^{\frac12} < \infty.   \]
Consider a sequence of equilibrium measures $\mu_n$ for the $r_n$-capacity of $F$. We define the (normalized) potentials 
\[ f_n(z) = \ccap_{r_n}(F) \int_{\bnd} K(r_nz,r_nw) d\mu_n(w). \]
Then by Lemma \ref{prop:potential_bound_below},  we have that  
$\|f_n\|_{\mathcal H}^2 \le \ccap_{r_n}(F)$,
hence the series
\[ f:=\sum_{n=1}^\infty f_n \]
converges absolutely in $\mathcal H$.
Finally let $z \in E$. Then by Fatou's Lemma and Lemma \ref{prop:potential_bound_below},
\begin{align*} \liminf_{r\to 1} \sum_{n=1}^\infty \Re f_n(r  z)  & \geq \sum_{n=1}^\infty \liminf_{r\to 1} \Re f_n(r z) = \sum_{n=1}^\infty 1 = \infty, \end{align*}
which proves the theorem.
\end{proof}

\section{Weak type estimate of maximal function}
\label{sec:weak_type}

The goal of this section is to establish
Theorem \ref{thm:boundary_limits_intro}, i.e.\ the existence of \Kor\ limits of functions in $H^2_d$
outside of a set of capacity zero.

For $\alpha > 1$ and $\zeta \in \partial \mathbb{B}_d$, let
\begin{equation*}
  D_\alpha(\zeta) = \{z \in \mathbb{B}_d: | 1 - \langle z,\zeta \rangle| < \frac{\alpha}{2} (1 - \|z\|^2) \}.
\end{equation*}
By definition, a function $f \in C(\mathbb{B}_d)$ has \emph{\Kor\ limit} $\lambda$ at a point $\zeta \in \partial \mathbb{B}_d$ if
for every $\alpha > 1$, we have $\lim_{z \to \zeta, z \in D_{\alpha}(\zeta)} f(z) = \lambda$.
If $f \in C(\mathbb{B}_d)$, let
\begin{equation*}
  (M_\alpha f)(\zeta) = \sup_{z \in D_\alpha(\zeta)} |f(z)| \quad (\zeta \in \partial \mathbb{B}_d)
\end{equation*}
be the \emph{\Kor\ maximal function} of $f$, which is a lower semi-continuous function on $\partial \mathbb{B}_d$; cf.\ \cite[Definition 5.4.4]{Rudin08}.

As remarked in Section \ref{sec:energy_basics},
we will work with the pluriharmonic Drury--Arveson space $\PHDA$,
i.e.\ the RKHS
on $\mathbb{B}_d$ with kernel
\begin{equation*}
   2 \Re \frac{1}{1 - \langle z,w \rangle } - 1 = \frac{1 - |\langle z,w \rangle |^2}{|1 - \langle z,w \rangle |^2}. 
\end{equation*}

The key to proving Theorem \ref{thm:boundary_limits_intro} is the following weak-type
estimate for the \Kor\ maximal function.

\begin{theorem}
  \label{thm:weak_type}
  For each $\alpha > 1$, there exists a constant $C_\alpha$, depending only on $\alpha$, such that
  \begin{equation*}
    \ccap_{h^2_d}^*( \{ M_\alpha f > t \}) \le C_\alpha^2 \frac{\|f\|^2}{t^2} \quad \text{ for all } f \in \PHDA, t > 0.
  \end{equation*}
\end{theorem}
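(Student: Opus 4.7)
The plan is to transform the capacitary weak-type inequality into a lower bound on the energy of arbitrary probability measures on the superlevel set, using the dual formulation of capacity. Since the \Kor\ maximal function $M_\alpha f$ is lower semi-continuous, the set $\Omega_t := \{M_\alpha f > t\}$ is open, so $\ccap^*(\Omega_t)$ coincides with the supremum of $\ccap(F)$ over compact $F \subset \Omega_t$. Consequently, it suffices to show that for every compact $F \subset \Omega_t$ and every probability measure $\mu \in P(F)$,
\begin{equation*}
\mathcal{E}(\mu) \ge \frac{t^2}{C_\alpha^2\, \|f\|_{\PHDA}^2}.
\end{equation*}

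To do this, I would first invoke a Kuratowski--Ryll-Nardzewski measurable-selection theorem, applied to the Borel multifunction $\zeta \mapsto \{z \in D_\alpha(\zeta) : |f(z)| > t\}$, to produce a Borel map $\zeta \mapsto z_\zeta \in D_\alpha(\zeta)$ with $|f(z_\zeta)| > t$ for every $\zeta \in F$. Writing $k$ for the reproducing kernel of $\PHDA$ and setting $\phi_\zeta := \overline{f(z_\zeta)}/|f(z_\zeta)|$, I would build the test vector
\begin{equation*}
g(w) := \int_F \overline{\phi_\zeta}\, k(w, z_\zeta)\, d\mu(\zeta) \in \PHDA,
\end{equation*}
which by the reproducing property satisfies $\langle f, g \rangle = \int_F |f(z_\zeta)|\, d\mu(\zeta) > t$. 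Cauchy--Schwarz then yields $\|g\|^2 > t^2/\|f\|^2$, so the problem reduces to establishing the matching upper bound $\|g\|^2 \le C_\alpha^2 \mathcal{E}(\mu)$.

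This last step is the crux and the main obstacle. Expanding the norm,
\begin{equation*}
\|g\|^2 = \iint \overline{\phi_\zeta}\phi_{\zeta'}\, k(z_\zeta, z_{\zeta'})\, d\mu(\zeta)\, d\mu(\zeta'),
\end{equation*}
and one cannot simply pass to $|k|$, because Example \ref{exa:real_part_modulus} shows that the modulus of the Drury--Arveson kernel is not comparable to its real part and so its total-variation analog is not controlled by $\mathcal{E}(\mu)$. Instead, one must exploit the admissibility $z_\zeta \in D_\alpha(\zeta)$ via standard \Kor\ geometric estimates relating $|1-\langle z_\zeta, z_{\zeta'}\rangle|$ to $|1-\langle \zeta, \zeta'\rangle|$ together with the depths $1-|z_\zeta|^2$ and $1-|z_{\zeta'}|^2$, in such a way that the oscillating phases $\overline{\phi_\zeta}\phi_{\zeta'}$ are absorbed into a nonnegative envelope dominated by the boundary kernel. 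Passing to the dilation $\mathcal{E}_r(\mu)$ and letting $r \nearrow 1$ ties this back to the supremum defining the energy. The delicate technical point is handling the phases without losing the \Kor\ constant; one workable route is to reinterpret $g$ as the potential of the complex measure $\overline{\phi_\cdot}\, \mu$ pushed forward by $\zeta \mapsto z_\zeta$ and then apply the dual norm formula of Proposition \ref{prop:energy_fa} together with a comparison between the energy of this lifted ball-measure and that of $\mu$ itself.
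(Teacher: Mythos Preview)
Your framework coincides with the paper's: the reduction to compact $F\subset\{M_\alpha f>t\}$, the measurable selection $\zeta\mapsto z_\zeta\in D_\alpha(\zeta)$, and the test vector $g$ are exactly the linearization $T_\psi f(\zeta)=f(\psi(\zeta))$ followed by a $TT^*$ computation (Lemmas~\ref{lem:linearization} and \ref{lem:TTstar}). One clarification: the kernel $k$ of $\PHDA$ is real and nonnegative, so the phases $\overline{\phi_\zeta}\phi_{\zeta'}$ are a red herring. You get for free
\[
\|g\|^2 \le \iint_{F\times F} k(z_\zeta,z_{\zeta'})\,d\mu(\zeta)\,d\mu(\zeta') = \|T_\psi\|_{\PHDA\to L^1(\mu)}^2,
\]
and Example~\ref{exa:real_part_modulus} is not the obstruction here. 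The genuine task is to bound this last double integral by $C_\alpha^2\,\mathcal{E}(\mu)$, and that is where your proposal has a gap.

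The ``standard \Kor\ estimates'' you invoke give only the one-sided bound $|1-\langle \zeta,w\rangle|\lesssim |1-\langle z_\zeta,w\rangle|$ (Lemma~\ref{lem:Koranyi_estimates}); this controls the denominator of $k$ in the wrong direction and says nothing about the numerator $1-|\langle z_\zeta,z_{\zeta'}\rangle|^2$, which can be much larger than $1-|\langle \zeta,\zeta'\rangle|^2$ (take $\zeta=\zeta'$). The paper's key step is a \emph{two-sided} comparison $k(z_\zeta,w)\approx k(\zeta,w)$, valid only under the asymmetric hypothesis $\|z_\zeta\|\ge\|w\|$ (Lemma~\ref{lem:Koranyi_estimate_2}). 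This forces a split of the domain into $\{\|z_\zeta\|\ge\|z_{\zeta'}\|\}$ and its complement; on each half one replaces the deeper point by its boundary anchor, obtaining
\[
\iint k(z_\zeta,z_{\zeta'})\,d\mu\,d\mu\ \lesssim\ \int f_\mu(z_{\zeta'})\,d\mu(\zeta')\ \le\ \|T_\psi\|\,\|f_\mu\|_{\PHDA}=\|T_\psi\|\,\mathcal{E}(\mu)^{1/2},
\]
which bootstraps to $\|T_\psi\|\lesssim\mathcal{E}(\mu)^{1/2}$. Your suggested route---viewing $g$ as the potential of the pushed-forward measure and invoking Proposition~\ref{prop:energy_fa}---merely rephrases the problem as ``compare the energy of the lifted measure to $\mathcal{E}(\mu)$'' without supplying this mechanism.
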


The proof of Theorem \ref{thm:weak_type} occupies most of the remainder of this section.
While we are mainly interested in $\PHDA$, we will initially work in somewhat greater generality.
Thus, let $\mathcal{H}$ be a unitarily invariant space of pluriharmonic functions with real-valued non-negative kernel $k$.

To prove the weak-type estimate, we will linearize the maximal function and apply a $T T^*$-argument. Both techniques are quite classical in harmonic analysis. An in depth study of linearizable operators, of which maximal functions are a particular example, can be found in \cite[Chapter V.1]{Francia1985}. The $TT^*$ method on the other hand belongs to the so called ``spectral methods'' of harmonic analysis, and as such it relies heavily on the Hilbert space structure of the underlying space.

Let $\psi: \partial \mathbb{B}_d \to \mathbb{B}_d$ be a measurable function with $\psi(\zeta) \in D_\alpha(\zeta)$ for all $\zeta \in \partial \mathbb{B}_d$.
We also assume that $\psi$ takes values in a compact subset of $\mathbb{B}_d$.
Let $\mu$ be a Borel probability measure on $\partial \mathbb{B}_d$.
Then the linear operator
\begin{equation*}
  T_\psi: \mathcal{H} \to L^1(\mu), \quad (T_\psi f)(\zeta) = f(\psi(\zeta))
\end{equation*}
is bounded.
The following lemma shows that controlling the maximal function is equivalent to controlling the operators $T_{\psi}$ uniformly
for all functions $\psi$.
As usual, the idea is to let $\psi(\zeta)$ be a point in $D_\alpha(\zeta)$ at which
the supremum in the definition of $M_{\alpha} f(\zeta)$ is (almost) achieved.
Since we have to ensure measurability of $\psi$, the actual proof is slightly
more technical.

\begin{lemma}
  \label{lem:linearization}
  Let $\mu$ be a Borel probability measure on $\partial \mathbb{B}_d$ and let $f \in C(\mathbb{B}_d)$. Then
  \begin{equation*}
    \|M_\alpha f\|_{L^1(\mu)} = \sup_{\psi} \|T_\psi f\|_{L^1(\mu)},
  \end{equation*}
  where the supremum is taken over all measurable functions $\psi: \partial \mathbb{B}_d \to \mathbb{B}_d$
  that satifsy $\psi(\zeta) \in D_\alpha(\zeta)$ for all $\zeta \in \partial \mathbb{B}_d$
  and that take values in a compact subset of $\mathbb{B}_d$.
\end{lemma}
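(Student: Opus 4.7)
The inequality $\sup_{\psi} \|T_\psi f\|_{L^1(\mu)} \leq \|M_\alpha f\|_{L^1(\mu)}$ is immediate: for any admissible $\psi$, we have $|T_\psi f(\zeta)| = |f(\psi(\zeta))| \leq M_\alpha f(\zeta)$ pointwise since $\psi(\zeta) \in D_\alpha(\zeta)$. The substance lies in the reverse inequality, which amounts to a measurable selection argument producing near-optimizers of $z \mapsto |f(z)|$ on $D_\alpha(\zeta)$ that depend measurably on $\zeta$ and avoid the boundary.

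The plan is to truncate and discretize. For $\delta \in (0, (\alpha-1)/\alpha)$, set $K_\delta = \{z \in \mathbb{B}_d : \|z\| \leq 1-\delta\}$ and define the truncated maximal function $M_\alpha^\delta f(\zeta) = \sup_{z \in D_\alpha(\zeta) \cap K_\delta} |f(z)|$; a short computation with the radial point $(1-2\delta)\zeta$ shows $D_\alpha(\zeta) \cap K_\delta \neq \emptyset$ for all $\zeta$ in this range of $\delta$. Since $K_\delta$ exhausts $\mathbb{B}_d$ as $\delta \searrow 0$, the function $M_\alpha^\delta f$ increases pointwise to $M_\alpha f$, and monotone convergence gives $\int M_\alpha^\delta f \, d\mu \to \|M_\alpha f\|_{L^1(\mu)}$. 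Now fix $\delta$ and $\varepsilon > 0$ and choose a countable dense sequence $\{z_n\}$ in $K_\delta$. Define
\begin{equation*}
    B_n = \{ \zeta \in \partial \mathbb{B}_d : z_n \in D_\alpha(\zeta) \text{ and } |f(z_n)| > M_\alpha^\delta f(\zeta) - \varepsilon \},
\end{equation*}
disjointify by setting $\widetilde{B}_n = B_n \setminus \bigcup_{k<n} B_k$, and put $\psi(\zeta) = z_n$ on $\widetilde{B}_n$. This $\psi$ automatically takes values in the compact set $K_\delta$ and satisfies $\psi(\zeta) \in D_\alpha(\zeta)$ by construction, yielding $\|T_\psi f\|_{L^1(\mu)} \geq \int M_\alpha^\delta f \, d\mu - \varepsilon$; letting $\varepsilon \to 0$ and then $\delta \to 0$ concludes the argument.

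Two points require verification, and they are where the main subtleties lie. First, the $B_n$ must actually cover $\partial \mathbb{B}_d$: for each $\zeta$, pick $w \in D_\alpha(\zeta) \cap K_\delta$ with $|f(w)| > M_\alpha^\delta f(\zeta) - \varepsilon/2$; because $D_\alpha(\zeta) \cap K_\delta$ is open in the relative topology of $K_\delta$ and $f$ is continuous, density of $\{z_n\}$ in $K_\delta$ yields some $z_n \in D_\alpha(\zeta) \cap K_\delta$ with $|f(z_n)|$ within $\varepsilon/2$ of $|f(w)|$, so $\zeta \in B_n$. Second, each $B_n$ must be Borel: the set $\{\zeta : z_n \in D_\alpha(\zeta)\}$ is open since the defining inequality $|1 - \langle z_n, \zeta\rangle| < \tfrac{\alpha}{2}(1-\|z_n\|^2)$ is continuous in $\zeta$, and the set $\{\zeta : M_\alpha^\delta f(\zeta) < |f(z_n)| + \varepsilon\}$ is open provided $M_\alpha^\delta f$ is lower semicontinuous, which follows from the usual observation that $D_\alpha$ varies lower semicontinuously (if $z \in D_\alpha(\zeta)$, then $z \in D_\alpha(\zeta')$ for $\zeta'$ near $\zeta$ by openness of the defining inequality).

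The main obstacle is precisely the selection argument: ensuring that we can choose near-optimizers measurably while simultaneously staying inside a compact subset of $\mathbb{B}_d$. Both requirements are met by fixing $K_\delta$ first, using lower semicontinuity of $M_\alpha^\delta f$ to get Borel level sets, and exploiting density of a single countable set in $K_\delta$ to define $\psi$ as a countably-valued simple function. The final passage $\delta \to 0$ is harmless thanks to monotone convergence.
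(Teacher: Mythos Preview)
Your argument is correct and takes a genuinely different route from the paper. The paper exploits the transitive action of the unitary group: it fixes a single $\xi \in \partial\mathbb{B}_d$ and a countable dense set $D = \{d_1, d_2, \ldots\} \subset D_\alpha(\xi)$, invokes a Borel right-inverse $\zeta \mapsto U_\zeta$ of the map $U \mapsto U\xi$ from $\mathcal{U}(d)$ onto $\partial\mathbb{B}_d$, and then uses $U_\zeta D$ as a dense set in $D_\alpha(\zeta)$. Truncating to the finite set $D_N = \{d_1,\ldots,d_N\}$ and taking $\psi(\zeta) = U_\zeta d_{j(\zeta)}$ for the first index realizing the finite maximum gives the selector. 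Your approach avoids the unitary group and the abstract Borel selection theorem entirely: the radial truncation to $K_\delta$ plus a single countable dense set in $K_\delta$ suffices, and the resulting $\psi$ is a countably-valued simple function. This is more elementary and would transfer to domains without a transitive symmetry group; the paper's approach, on the other hand, makes the compactness of the range of $\psi$ automatic (the image lies in the finite set $\{U_\zeta d_j : \zeta \in \partial\mathbb{B}_d,\ j \le N\}$, which is contained in a compact subset of $\mathbb{B}_d$).

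One small slip: you write that $\{\zeta : M_\alpha^\delta f(\zeta) < |f(z_n)| + \varepsilon\}$ is \emph{open} because $M_\alpha^\delta f$ is lower semicontinuous. Lower semicontinuity gives open \emph{super}level sets, not sublevel sets. Nonetheless your argument for lower semicontinuity of $M_\alpha^\delta f$ is correct, and lower semicontinuity implies Borel measurability, so the sublevel set is Borel (indeed $F_\sigma$), which is all you need for $B_n$ to be Borel. The proof stands.
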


\begin{proof}
  The pointwise bound
  \begin{equation*}
    |T_\psi f(\zeta)| \le |M_\alpha f(\zeta)| \quad \text{ for all } \zeta \in \partial \mathbb{B}_d
  \end{equation*}
  and all admissible functions $\psi$ shows that $\sup_{\psi} \|T_\psi f\|_{L^1(\mu)} \le \|M_\alpha f\|_{L^1(\mu)}$.

  To prove the converse inequality, we fix a point $\xi \in \partial \mathbb{B}_d$ and a countable
  dense subset $D \subset D_\alpha(\xi)$. If $\mathcal{U}(d)$ denotes the group of $d \times d$ unitaries,
  then the surjective map
  \begin{equation*}
    \Phi: \mathcal{U}(d) \to \partial \mathbb{B}_d, \quad U \mapsto U \xi,
  \end{equation*}
  admits a Borel right-inverse $\Psi: \partial \mathbb{B}_d \to \mathcal{U}(d)$.
  For instance, this follows from the fact that $\Phi$ is a fiber bundle,
  or from the abstract result that every continuous surjection
  between compact metric spaces admits a Borel right-inverse; see e.g.\ \cite[Corollary 5.2.6]{Srivastava98}.
  We write $U_\zeta = \Psi(\zeta)$, thus $U_\zeta \xi = \zeta$
  and hence $U_\zeta D_\alpha(\xi) = D_\alpha(\zeta)$ for all $\zeta \in \partial \mathbb{B}_d$.
  By continuity of $f$ on $\mathbb{B}_d$, it follows that
  \begin{equation*}
    (M_\alpha f)(\zeta) = \sup_{z \in U_\zeta D} |f(z)| \quad \text{ for all } \zeta \in \partial \mathbb{B}_d.
  \end{equation*}
  Let $D = \{d_1,d_2,\ldots \}$ and $D_N = \{d_1,\ldots,d_N\}$. Define
  \begin{equation*}
    (M_N f)(\zeta) = \sup_{z \in U_\zeta D_N} |f(z)|.
  \end{equation*}
  Then $(M_N f)_N$ increases to $M_\alpha f$, so by the monotone convergence theorem,
  \begin{equation}
    \label{eqn:monotone_convergence}
    \|M_\alpha f\|_{L^1(\mu)} = \sup_{N \in \mathbb{N}} \|M_N f\|_{L^1(\mu)}.
  \end{equation}
Let us now fix $N \in \mathbb{N}$; we will realize
$M_N f$ with a measurable function $\psi$. For each $j=1,\ldots,N$, let
  \begin{equation*}
    A_j = \{\zeta \in \partial \mathbb{B}_d: (M_N f)(\zeta) = |f( U_\zeta d_j)| \}.
  \end{equation*}
  Then each $A_j$ is a Borel subset of $\partial \mathbb{B}_d$.
  Let $B_1 = A_1, B_2 = A_2 \setminus A_1, B_3 = A_3 \setminus (A_1 \cup A_2)$ and so on.
  Then $B_1,\ldots,B_N$ form a Borel partition of $\partial \mathbb{B}_d$. Let
  \begin{equation*}
    \psi: \partial \mathbb{B}_d \to \mathbb{B}_d, \quad \zeta \mapsto U_\zeta d_j
    \quad \text{ if } \zeta \in B_j.
  \end{equation*}
  Then $\psi$ is Borel measurable,
  $\psi(\zeta) \in D_\alpha(\zeta)$ for all $\zeta \in \mathbb{B}_d$,
  $\psi$ takes values in a compact subset of $\mathbb{B}_d$ and
  $(M_N f)(\zeta) = |f(\psi(\zeta))| $ for all $\zeta \in \mathbb{B}_d$.
  Hence
  \begin{equation*}
    \|M_N f\|_{L^1(\mu)} = \|T_\psi f\|_{L^1(\mu)}.
  \end{equation*}
  Combining this identity with \eqref{eqn:monotone_convergence}, the result follows.
\end{proof}

The norm of the linear operators $T_\psi$ can be computed expicitly
with the help of duality.

\begin{lemma}
  \label{lem:TTstar}
  Let $\mathcal{H}$ be a unitarily invariant space of pluriharmonic functions with non-negative kernel $k$.
  For each measurable function $\psi: \partial \mathbb{B}_d \to \mathbb{B}_d$
  taking values in a compact subset of $\mathbb{B}_d$, we have
  \begin{equation*}
    \|T_\psi\|_{\mathcal{H} \to L^1(\mu)}^2 = \int_{\partial \mathbb{B}_d} \int_{\partial \mathbb{B}_d} {k}(\psi(\zeta),\psi(\eta)) \, d \mu(\eta) d \mu(\zeta).
  \end{equation*}
\end{lemma}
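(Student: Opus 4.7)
The plan is to run a standard $TT^*$-type duality argument, using the crucial hypothesis that $k$ is pointwise non-negative to turn an inequality into an equality.

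First I would note that $T_\psi$ is well-defined and bounded: since $\psi$ takes values in a compact subset $L \subset \mathbb B_d$, the reproducing property gives $|f(\psi(\zeta))| \le \sup_{w \in L} \|k(\cdot,w)\|_{\mathcal H} \cdot \|f\|_{\mathcal H}$, so $T_\psi f \in L^\infty(\mu) \subset L^1(\mu)$. The general duality $\|T_\psi\|_{\mathcal H \to L^1(\mu)} = \|T_\psi^*\|_{L^\infty(\mu) \to \mathcal H}$ (identifying $\mathcal H^* \cong \mathcal H$ via the inner product) reduces the problem to computing the norm of the adjoint.

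Next I would identify $T_\psi^*$ explicitly. For $f \in \mathcal H$ and $g \in L^\infty(\mu)$, the reproducing property and Fubini give
\[
  \int_{\partial \mathbb B_d} (T_\psi f)(\zeta) \overline{g(\zeta)} \, d\mu(\zeta)
  = \int_{\partial \mathbb B_d} \langle f, k(\cdot, \psi(\zeta)) \rangle_{\mathcal H} \overline{g(\zeta)} \, d\mu(\zeta)
  = \Big\langle f, \int_{\partial \mathbb B_d} g(\zeta) k(\cdot, \psi(\zeta)) \, d\mu(\zeta) \Big\rangle_{\mathcal H},
\]
where the $\mathcal H$-valued Bochner integral converges because $\zeta \mapsto k(\cdot, \psi(\zeta))$ is uniformly bounded in $\mathcal H$-norm on the compact image of $\psi$. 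Hence $T_\psi^* g = \int g(\zeta) k(\cdot, \psi(\zeta))\, d\mu(\zeta)$, and expanding the squared norm via $\langle k(\cdot,w_1), k(\cdot,w_2)\rangle = k(w_2,w_1)$ yields
\[
  \|T_\psi^* g\|_{\mathcal H}^2
  = \int_{\partial \mathbb B_d} \int_{\partial \mathbb B_d} g(\zeta) \overline{g(\eta)}\, k(\psi(\eta), \psi(\zeta)) \, d\mu(\zeta) \, d\mu(\eta).
\]

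The final step uses $k \ge 0$: for any $g$ with $\|g\|_{L^\infty(\mu)} \le 1$, the above double integral is bounded by $\int \int k(\psi(\eta),\psi(\zeta))\, d\mu(\zeta) \, d\mu(\eta)$, and equality is attained at $g \equiv 1$. Together with the symmetry $k(\psi(\eta),\psi(\zeta)) = k(\psi(\zeta),\psi(\eta))$ (automatic for real positive semi-definite kernels), this gives the claimed identity. The only place a difficulty could arise is in justifying the Bochner integral representation of $T_\psi^*$, but this is routine given the compactness of the image of $\psi$; the pointwise non-negativity of $k$ does the real work in upgrading the duality bound to an equality.
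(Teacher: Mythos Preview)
Your argument is correct and follows essentially the same $TT^*$-type route as the paper: compute $T_\psi^*g$ explicitly as an $\mathcal H$-valued integral of kernel functions, expand $\|T_\psi^*g\|_{\mathcal H}^2$ via the reproducing property, and use $k\ge 0$ together with the choice $g\equiv 1$ to turn the bound into an equality. The only cosmetic difference is that the paper phrases the computation as evaluating $\|T_\psi T_\psi^*\|_{L^\infty(\mu)\to L^1(\mu)}$ rather than $\sup_{\|g\|_\infty\le 1}\|T_\psi^*g\|_{\mathcal H}^2$, but these are the same quantity and the decisive step is identical.
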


\begin{proof}
  Note that $T_\psi^*$ is an operator from $L^\infty(\mu)$ into $\mathcal{H}$, where we identify $L^\infty(\mu)$ with the conjugate dual of $L^1(\mu)$. Moreover, 
  \begin{equation*}
    \|T_\psi\|^2_{\mathcal{H} \to L^1(\mu)} = \|T_\psi T_\psi^*\|_{L^\infty(\mu) \to L^1(\mu)}.
  \end{equation*}

  Now, for all $z \in \mathbb{B}_d$ and $g \in L^\infty(\mu)$, we have
  \begin{align*}
  (T_\psi^* g)(z) = \langle T_\psi^* g, {k}(\cdot,z) \rangle_{\mathcal{H}}
   &=\int_{\partial \mathbb{B}_d} g(\eta) \overline{T_\psi( {k}(\cdot,z))(\eta)} \, d \mu(\eta) \\
   &= \int_{\partial \mathbb{B}_d} g(\eta) {k}(z, \psi(\eta)) \, d \mu(\eta).
  \end{align*}
  Therefore,
  \begin{equation*}
    (T_\psi T_\psi^* g)(\zeta)
  = \int_{\partial \mathbb{B}_d} g(\eta) {k}(\psi(\zeta), \psi(\eta)) \, d \mu(\eta).
  \end{equation*}
  Thus, if $\|g\|_{L^\infty(\mu)} = 1$, then since ${k}$ is non-negative, we have
  \begin{equation*}
    \|T_\psi T_\psi^* g\|_{L^1(\mu)} \le \int_{\partial \mathbb{B}_d} \int_{\partial \mathbb{B}_d} {k}(\psi(\zeta), \psi(\eta)) \, d \mu(\eta) d \mu(\zeta).
  \end{equation*}
  Equality holds for $g = 1$, which gives the desired formula for the operator norm of $T_\psi T_\psi^*$.
\end{proof}

We require the following basic estimate, which is related to classical estimates for the invariant Poisson
kernel; see e.g.\ \cite[Lemma 5.4.3]{Rudin08}.
As usual, we will write $A \lesssim B$ to mean that there exists a constant $C \in (0,\infty)$
such that $A \le C B$.

\begin{lemma}
  \label{lem:Koranyi_estimates}
  Let $\zeta \in \partial \mathbb{B}_d$, let $w \in \overline{\mathbb{B}_d}$ and let $\psi(\zeta) \in D_\alpha(\zeta)$.
  Then
  \begin{equation*}
    |1 - \langle \zeta, w \rangle| \lesssim |1 - \langle \psi(\zeta), w \rangle|,
  \end{equation*}
  where the implied constant only depends on $\alpha$.
\end{lemma}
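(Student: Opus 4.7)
The plan is to set $z = \psi(\zeta) \in \mathbb{B}_d$ and decompose
\[
    1 - \langle \zeta, w \rangle = (1 - \langle z, w \rangle) + \langle z - \zeta, w \rangle,
\]
reducing the problem to showing that the correction term satisfies $|\langle z - \zeta, w \rangle| \lesssim_\alpha |1 - \langle z, w \rangle|$. First I would record three elementary ingredients: (i) the $D_\alpha$-condition gives $|1 - \langle z, \zeta\rangle| < \frac{\alpha}{2}(1 - \|z\|^2)$; (ii) expanding $\|z - \zeta\|^2 = \|z\|^2 + 1 - 2\Re \langle z, \zeta \rangle$ and using $1 - \Re\langle z, \zeta\rangle \leq |1 - \langle z, \zeta\rangle|$ yields $\|z - \zeta\|^2 \leq (\alpha - 1)(1 - \|z\|^2)$; (iii) from $|1 - \langle z, w\rangle| \geq 1 - \|z\|$ (valid since $\|w\| \leq 1$) one obtains $1 - \|z\|^2 \leq 2|1 - \langle z, w \rangle|$, and the same reasoning also gives $\|w - z\|^2 \leq 2|1 - \langle z, w \rangle|$.

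The main obstacle is that a direct Cauchy--Schwarz estimate
\[
    |\langle z - \zeta, w \rangle| \leq \|z - \zeta\| \lesssim_\alpha \sqrt{1 - \|z\|^2} \lesssim |1 - \langle z, w \rangle|^{1/2}
\]
only gives a \emph{square-root} bound, which is too weak when $|1 - \langle z, w \rangle|$ is small. To gain the missing factor I would further split
\[
    \langle z - \zeta, w \rangle = \langle z - \zeta, w - z \rangle + \langle z - \zeta, z \rangle,
\]
so that each piece carries an extra small factor. For the first piece, Cauchy--Schwarz combined with the bounds on $\|z - \zeta\|$ and $\|w - z\|$ from (ii) and (iii) gives
\[
    |\langle z - \zeta, w - z \rangle| \leq \|z - \zeta\|\,\|w - z\| \lesssim_\alpha |1 - \langle z, w \rangle|,
\]
where the two square-root factors multiply to produce the correct linear dependence. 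For the second piece, the algebraic identity
\[
    \langle z - \zeta, z \rangle = \|z\|^2 - \overline{\langle z, \zeta \rangle} = -(1 - \|z\|^2) + \overline{1 - \langle z, \zeta \rangle},
\]
together with (i) and (iii), gives $|\langle z - \zeta, z \rangle| \leq (1 + \alpha/2)(1 - \|z\|^2) \lesssim_\alpha |1 - \langle z, w \rangle|$.

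Combining the two estimates with the initial triangle inequality yields $|1 - \langle \zeta, w \rangle| \leq C_\alpha |1 - \langle z, w \rangle|$ with a constant $C_\alpha$ that is an explicit polynomial in $\alpha$ and $\sqrt{\alpha - 1}$. The conceptual point is that the naive Cauchy--Schwarz bound loses half an order of smallness, and the cure is to ``center'' the second slot of $\langle z - \zeta, \,\cdot\,\rangle$ at $z$ so that this missing factor is recovered from $\|w - z\|$.
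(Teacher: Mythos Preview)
Your proof is correct, but the paper takes a different and shorter route. Instead of decomposing $1-\langle \zeta,w\rangle$ directly and then cleverly recentering the inner product at $z$ to recover the lost square-root factor, the paper invokes the triangle inequality for the \Kor\ pseudodistance $d(a,b)=|1-\langle a,b\rangle|^{1/2}$ (see \cite[Proposition 5.1.2]{Rudin08}):
\[
d(\zeta,w)\le d(\zeta,\psi(\zeta))+d(\psi(\zeta),w).
\]
The $D_\alpha$-condition and the trivial bound $1-\|\psi(\zeta)\|\le 1-\|\psi(\zeta)\|\,\|w\|\le|1-\langle\psi(\zeta),w\rangle|$ give $d(\zeta,\psi(\zeta))\lesssim_\alpha d(\psi(\zeta),w)$, and squaring finishes the proof in two lines. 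So the ``main obstacle'' you identify---that the naive estimate only yields a square-root bound---is precisely what the \Kor\ triangle inequality is designed to absorb: working with $d$ rather than $d^2$ linearizes the problem. Your argument has the virtue of being self-contained (it does not appeal to any external fact about $d$), while the paper's is shorter and more conceptual; in effect, your splitting $\langle z-\zeta,w\rangle=\langle z-\zeta,w-z\rangle+\langle z-\zeta,z\rangle$ is doing by hand what the triangle inequality for $d$ does automatically.
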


\begin{proof}
  Let $d(a,b) = |1 - \langle a,b \rangle|^{1/2}$ for $a ,b \in \overline{\mathbb{B}_d}$
  denote the \Kor\ distance, which satisfies the triangle inequality; see e.g.\ \cite[Proposition 5.1.2]{Rudin08}.
  Then
  \begin{equation*}
    d(\zeta,w) \le d(\zeta,\psi(\zeta)) + d(\psi(\zeta),w).
  \end{equation*}
  By definition of $D_\alpha(\zeta)$, we have
  \begin{align*}
    | 1 - \langle \zeta,\psi(\zeta) \rangle|
    < \frac{\alpha}{2} (1 - \|\psi(\zeta)\|^2)
    \lesssim 1 - \|\psi(\zeta)\|
    &\le 1 - \|\psi(\zeta)\| \|w\| \\
    &\le |1 - \langle \psi(\zeta),w \rangle|.
  \end{align*}
  Combining both inequalities yields the result.
\end{proof}

In the case of standard weighted Dirichlet spaces, the preceding results fairly easily imply a key estimate for
the maximal function, since the real part and the modulus of the reproducing kernel are comparable.
The result is known at least for the Dirichlet space in the unit disc \cite[p.34]{EKM+14}, but the proof below may be new.

\begin{proposition}
  \label{prop:maximal_function_weighted_Dirichlet}
  Let
  \begin{equation*}
    K(z,w) = \log \Big( \frac{e}{1 - \langle z,w \rangle} \Big)
  \end{equation*}
  or
  \begin{equation*}
    K(z,w) = \frac{1}{(1 - \langle z,w \rangle)^a }
  \end{equation*}
  for $a \in (0,1)$. Let $\mathcal{H}$ be the RKHS with reproducing kernel $\Re K $ .
  Then for all $\alpha > 1$, there exists a constant $C_\alpha$ such that for all $\mu \in P(\partial \mathbb{B}_d)$ and all $f \in \mathcal{H}$, we have
  \begin{equation*}
    \|M_\alpha f\|_{L^1(\mu)} \le C_\alpha \mathcal{E}_{\mathcal{H}}(\mu)^{1/2} \|f\|_{\mathcal{H}}.
  \end{equation*}
\end{proposition}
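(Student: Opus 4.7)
The strategy is to linearize $M_\alpha$ via Lemma \ref{lem:linearization}, compute the resulting operator norm by the $TT^*$ formula in Lemma \ref{lem:TTstar}, and then use the \Kor\ triangle-type estimate of Lemma \ref{lem:Koranyi_estimates} to migrate the integration from points $\psi(\zeta)\in D_\alpha(\zeta)\subset\mathbb B_d$ to the boundary, where the integral is controlled by $\mathcal E_{\mathcal H}(\mu)$. The scheme runs cleanly in the present setting precisely because, for the two kernels listed, $\Re K$ and $|K|$ are comparable.

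First, by Lemma \ref{lem:linearization}, the conclusion of the proposition is equivalent to the estimate $\|T_\psi f\|_{L^1(\mu)}\le C_\alpha \mathcal E_{\mathcal H}(\mu)^{1/2}\|f\|_{\mathcal H}$, uniformly in all admissible measurable selectors $\psi\colon\partial\mathbb B_d\to\mathbb B_d$ with $\psi(\zeta)\in D_\alpha(\zeta)$. Since $\|T_\psi f\|_{L^1(\mu)}\le\|T_\psi\|\,\|f\|_{\mathcal H}$, and Lemma \ref{lem:TTstar} with $k=\Re K$ identifies the operator norm, it suffices to establish
\[
  \int_{\partial\mathbb B_d}\int_{\partial\mathbb B_d} k(\psi(\zeta),\psi(\eta))\,d\mu(\eta)\,d\mu(\zeta) \;\le\; C_\alpha^{2}\,\mathcal E_{\mathcal H}(\mu).
\]

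Second, I would apply Lemma \ref{lem:Koranyi_estimates} twice to replace the points $\psi(\zeta),\psi(\eta)$ by the boundary points $\zeta,\eta$. Once with $w=\psi(\eta)\in\overline{\mathbb B_d}$, it yields $|1-\langle\zeta,\psi(\eta)\rangle|\lesssim|1-\langle\psi(\zeta),\psi(\eta)\rangle|$; once more, with the roles of $\zeta$ and $\eta$ swapped and $w=\zeta$, and using the symmetry $|1-\langle a,b\rangle|=|1-\langle b,a\rangle|$, it yields $|1-\langle\eta,\zeta\rangle|\lesssim|1-\langle\zeta,\psi(\eta)\rangle|$. Chaining the two gives
\[
  |1-\langle\zeta,\eta\rangle| \;\lesssim\; |1-\langle\psi(\zeta),\psi(\eta)\rangle|
\]
with an implicit constant depending only on $\alpha$. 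For $K(z,w)=(1-\langle z,w\rangle)^{-a}$, the comparability $\Re K\approx|K|$ (which holds because $|\arg(1-\langle z,w\rangle)|<\pi/2$ and $a\in(0,1)$) converts this into the pointwise bound $k(\psi(\zeta),\psi(\eta))\lesssim k(\zeta,\eta)$. For the logarithmic kernel one obtains instead $k(\psi(\zeta),\psi(\eta))\le k(\zeta,\eta)+\log C_\alpha$, which is equivalent to a multiplicative bound since $k(\zeta,\eta)\ge\log(e/2)>0$ throughout $\overline{\mathbb B_d}\times\overline{\mathbb B_d}$.

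Finally, since $k\ge 0$, Fatou's lemma applied inside the supremum defining $\mathcal E_{\mathcal H}(\mu)$ gives
\[
  \int_{\partial\mathbb B_d}\int_{\partial\mathbb B_d} k(\zeta,\eta)\,d\mu(\eta)\,d\mu(\zeta) \;\le\; \liminf_{r\to 1}\mathcal E_r(\mu) \;=\; \mathcal E_{\mathcal H}(\mu),
\]
and combining with the previous pointwise bound closes the argument. The main obstacle in this proof is the transition $k(\psi(\zeta),\psi(\eta))\lesssim k(\zeta,\eta)$, which rests decisively on the comparability $\Re K\approx|K|$. This is precisely the property that fails for the Drury--Arveson kernel itself, which explains why the proof of Theorem \ref{thm:weak_type} in the sequel must proceed via a more delicate route.
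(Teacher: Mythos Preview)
Your proof is correct and follows essentially the same route as the paper's: linearize via Lemma \ref{lem:linearization}, apply the $TT^*$ identity of Lemma \ref{lem:TTstar}, chain two applications of Lemma \ref{lem:Koranyi_estimates} to obtain $|1-\langle\zeta,\eta\rangle|\lesssim|1-\langle\psi(\zeta),\psi(\eta)\rangle|$, convert this into a pointwise kernel bound using $\Re K\approx|K|$, and finish with Fatou's lemma. The only cosmetic difference is in the logarithmic case: you absorb the additive constant $\log C_\alpha$ using the pointwise lower bound $k(\zeta,\eta)\ge\log(e/2)>0$ on $\partial\mathbb B_d\times\partial\mathbb B_d$, whereas the paper absorbs it after integrating by invoking $\mathcal E_{\mathcal H}(\mu)\ge 1$ for probability measures; both arguments are equally valid.
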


\begin{proof}
  Let $k = \Re K$.
  Let $\psi: \partial \mathbb{B}_d \to \mathbb{B}_d$ be a measurable function that satisfies
  $\psi(\zeta) \in D_\alpha(\zeta)$ for all $\zeta \in \partial \mathbb{B}_d$ and that takes values
  in a compact subset of $\mathbb{B}_d$.
  We show that the linearized operator $T_\psi$ is bounded as an operator $\mathcal{H} \to L^1(\mu)$
  and that 
  \begin{equation*}
    \|T_\psi\|_{\mathcal{H} \to L^1(\mu)} \le C_\alpha \mathcal{E}_\mathcal{H}(\mu)^{1/2},
  \end{equation*}
  where $C_\alpha$ is a constant that only depends on $\alpha$. The result then follows
  from Lemma \ref{lem:linearization}.

  By Lemma \ref{lem:TTstar}, we have
  \begin{equation*}
    \|T_\psi\|_{\mathcal{H} \to L^1(\mu)}^2 = \int_{\partial \mathbb{B}_d} \int_{\partial \mathbb{B}_d} {k}(\psi(\zeta),\psi(\eta)) \, d \mu(\eta) d \mu(\zeta),
  \end{equation*}
  so it suffices to bound the right-hand side by a constant times $\mathcal{E}_{\mathcal H}(\mu)$.
  Applying Lemma \ref{lem:Koranyi_estimates} twice, we find that
  \begin{equation}
    \label{eqn:Koranyi_estimate}
    |1 - \langle \zeta,\eta \rangle| \lesssim |1 - \langle \psi(\zeta),\eta \rangle|
    \lesssim |1 - \langle \psi(\zeta), \psi(\eta) \rangle|.
  \end{equation}
  If $K(z,w) = (1 - \langle z,w \rangle)^{-a}$, then $ k = \Re K$ and $|K|$ are comparable,
  so \eqref{eqn:Koranyi_estimate} implies that
  \begin{equation}
    \label{eqn:Koranyi_energy}
    \int_{\partial \mathbb{B}_d} \int_{\partial \mathbb{B}_d} k(\psi(\zeta),\psi(\eta)) \, d \mu(\eta) d \mu(\zeta)
    \lesssim \int_{\partial \mathbb{B}_d} \int_{\partial \mathbb{B}_d} k(\zeta,\eta) \,d \mu(\eta) d \mu(\zeta).
  \end{equation}
  The last expression is less or equal than $\mathcal{E}_\mathcal{H}(\mu)$ by Fatou's Lemma.

  Similarly, if $K(z,w) = \log( \frac{e}{1 - \langle z,w \rangle })$, then using
  that the energy of a probability measure is at least $1$, we find that \eqref{eqn:Koranyi_estimate} implies \eqref{eqn:Koranyi_energy},
  which again gives the desired estimate in terms of the energy.
\end{proof}

Proposition \ref{prop:maximal_function_weighted_Dirichlet} easily implies the weak-type estimate
for capacity in the (weighted) Dirichlet space, cf.\ the proof of Theorem \ref{thm:weak_type} below.

The argument in the last proof does not carry over to the Drury--Arveson space, since
the real part and the modulus of $\frac{1}{1 - \langle z,w \rangle }$ are not comparable.
Instead, we will use a more delicate argument.
We begin with the following refinement of Lemma \ref{lem:Koranyi_estimates}.

\begin{lemma}
  \label{lem:Koranyi_estimate_2}
  Let
  \begin{equation*}
    k(z,w)= 2 \Re \frac{1}{1 - \langle z,w \rangle } - 1 = \frac{1 - |\langle z,w \rangle |^2}{|1 - \langle z,w \rangle |^2}.
  \end{equation*}
  Let $\zeta \in \partial \mathbb{B}_d$, let $w \in \mathbb{B}_d$ and let $\psi(\zeta) \in D_\alpha(\zeta)$ with $\|\psi(\zeta)\| \ge \|w\|$.
  Then
  \begin{equation*}
    {k}(\psi(\zeta),w) \approx {k}(\zeta,w),
  \end{equation*}
  where the implied constants only depend on $\alpha$.
\end{lemma}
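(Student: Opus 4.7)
The plan is to establish the ``geometric-mean'' bound
\[
|\langle z,w\rangle - \langle\zeta,w\rangle|^2 \le C_\alpha (1-|\langle z,w\rangle|^2)(1-|\langle\zeta,w\rangle|^2)
\]
for a constant $C_\alpha$ depending only on $\alpha$, and then to derive the two-sided comparability $k(\psi(\zeta),w)\approx k(\zeta,w)$ by a short quadratic-inequality argument. Throughout, write $a = \langle\zeta,w\rangle$ and $b = \langle\psi(\zeta),w\rangle = \langle z,w\rangle$.

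To obtain the key estimate, I will decompose both $z$ and $w$ along the $\zeta$-direction: write $z = \lambda\zeta + z^\perp$ with $\lambda = \langle z,\zeta\rangle$, and $w = \overline{a}\zeta + w^\perp$, where $z^\perp,w^\perp\perp\zeta$. Direct expansion gives $b-a = (\lambda-1)a + \langle z^\perp,w^\perp\rangle$. The \Kor\ condition $\psi(\zeta)\in D_\alpha(\zeta)$ immediately supplies $|1-\lambda|\le\tfrac{\alpha}{2}(1-\|z\|^2)$, and since $|\lambda|\ge 1-|1-\lambda|$ it further yields $\|z^\perp\|^2 = \|z\|^2-|\lambda|^2\le (\alpha-1)(1-\|z\|^2)$. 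Using the elementary $(A+B)^2\le 2A^2+2B^2$ together with $|a|\le\|w\|$ and $\|w^\perp\|^2 = \|w\|^2-|a|^2$, these bounds reduce the estimate on $|b-a|^2$ to a linear combination of $(1-\|z\|^2)^2$ and $(1-\|z\|^2)(\|w\|^2-|a|^2)$. I then collapse these into $(1-|a|^2)(1-|b|^2)$ via three elementary observations: $1-\|z\|^2\le 1-|b|^2$ (always, since $|b|\le\|z\|\|w\|\le\|z\|$); $1-\|z\|^2\le 1-|a|^2$ (this is where the hypothesis $\|\psi(\zeta)\|\ge\|w\|$ enters, through $|a|\le\|w\|\le\|z\|$); and $\|w\|^2-|a|^2\le 1-|a|^2$.

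For the passage from $|b-a|$ to $k$, the identity $k(z,w) = 2\Re\tfrac{1}{1-b}-1$ gives
\[
k(z,w)-k(\zeta,w) = 2\Re\frac{b-a}{(1-a)(1-b)},
\]
and combining this with $\sqrt{k(z,w)} = \sqrt{1-|b|^2}/|1-b|$ (and its analogue for $\zeta$), the key estimate factors as
\[
|k(z,w)-k(\zeta,w)|\le 2\sqrt{C_\alpha}\,\sqrt{k(z,w)\,k(\zeta,w)}.
\]
Setting $s=\sqrt{k(z,w)}$, $t=\sqrt{k(\zeta,w)}$, this reads $(s+t)|s-t|\le 2\sqrt{C_\alpha}\,st$, which forces $|s-t|\le 2\sqrt{C_\alpha}\min(s,t)$ and hence $\max(s,t)\le (1+2\sqrt{C_\alpha})\min(s,t)$. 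Squaring delivers the desired $k(z,w)\approx k(\zeta,w)$ with constants depending only on $\alpha$.

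The main obstacle is obtaining the bound on $|b-a|$ in exactly this symmetric, geometric-mean form. A cruder one-sided bound $|b-a|\lesssim 1-|a|^2$ is easy to derive, but it only produces $|k(z,w)-k(\zeta,w)|\lesssim k(\zeta,w)$, which by itself does not prevent $k(z,w)$ from being much smaller than $k(\zeta,w)$. It is precisely the symmetric form $|b-a|^2\lesssim (1-|a|^2)(1-|b|^2)$ — and the observation that the hypothesis $\|\psi(\zeta)\|\ge\|w\|$ is exactly what is needed to turn the factor $(1-\|z\|^2)$ into $(1-|a|^2)$ — that feeds into the quadratic-inequality step and delivers genuine two-sided comparability.
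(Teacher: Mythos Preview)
Your proof is correct and takes a genuinely different route from the paper's. The paper first establishes the two-sided denominator estimate $|1-\langle\zeta,w\rangle|\approx|1-\langle\psi(\zeta),w\rangle|$ via the \Kor\ triangle inequality (using the hypothesis $\|\psi(\zeta)\|\ge\|w\|$ for one direction), and then reduces the numerator comparison $1-|a|\approx 1-|b|$ to the denominator estimate by a rotation trick: choosing a unimodular $\omega$ with $\omega a=|a|$ turns $1-|a|$ into $|1-\langle\zeta,\overline{\omega}w\rangle|$, and the denominator estimate applies with $w$ replaced by $\overline{\omega}w$. Your argument bypasses both the \Kor\ metric and the rotation trick entirely: the orthogonal decomposition along $\zeta$ gives the sharp ``geometric-mean'' bound $|b-a|^2\lesssim(1-|a|^2)(1-|b|^2)$ directly, and the identity $k(z,w)-k(\zeta,w)=2\Re\frac{b-a}{(1-a)(1-b)}$ then makes the denominators disappear into the square roots of the two kernels, so no separate comparison of $|1-a|$ and $|1-b|$ is ever needed. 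Your approach is more self-contained and arguably more elementary; the paper's is more modular, building on the prior Lemma on \Kor\ estimates and exploiting the rotational symmetry of the setup.
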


\begin{proof}
  We first show that
  \begin{equation}
    \label{eqn:koranyi_equiv}
    |1 - \langle \zeta, w \rangle| \approx |1 - \langle \psi(\zeta),w \rangle|.
  \end{equation}
  The upper bound was established in Lemma \ref{lem:Koranyi_estimates}.
  For the lower bound, we also use the triangle inequality for the \Kor\ distance, which gives
  \begin{equation*}
    d(\psi(\zeta),w) \le d(\psi(\zeta),\zeta) + d(\zeta,w).
  \end{equation*}
  Using first the definition of $D_\alpha(\zeta)$ and then the hypothesis $\|\psi(\zeta)\| \ge \|w\|$,
  we find that
  \begin{equation*}
    |1 - \langle \psi(\zeta),\zeta \rangle| \lesssim 1 - \|\psi(\zeta)\| \le 1 - \|w\| \le |1 - \langle \zeta,w \rangle|.
  \end{equation*}
  This establishes \eqref{eqn:koranyi_equiv}.

  Now,
  \begin{equation*}
    \frac{k(\psi(\zeta),w)}{k(\zeta,w)}  =
  \frac{1 - | \langle \psi(\zeta),w \rangle|^2 }{ 1 - | \langle  \zeta,w \rangle|^2}
    \cdot
    \frac{|1 - \langle \zeta,w \rangle|^2 }{ |1 - \langle \psi(\zeta),w \rangle|^2 }.
  \end{equation*}
  The second factor is bounded above and below thanks to \eqref{eqn:koranyi_equiv}, so it remains to bound the first factor.
  To this end, choose a unimodular $\omega \in \mathbb{C}$ such that $\omega \langle \zeta,w \rangle = |\langle \zeta,w \rangle|$.
  Then by the reverse triangle inequality,
  \begin{equation*}
    \frac{1 - | \langle \psi(\zeta),w \rangle|^2 }{ 1 - | \langle \zeta,w \rangle|^2 }
    \approx
    \frac{1 - | \langle \psi(\zeta),w \rangle| }{ 1 - | \langle \zeta,w \rangle| }
    \le \frac{ | 1 - \omega \langle \psi(\zeta), w \rangle|}{| 1 - \omega \langle \zeta,w \rangle|},
  \end{equation*}
  which is uniformly bounded thanks to \eqref{eqn:koranyi_equiv}, applied with $w$ replaced by $\overline{\omega} w$.
  For the lower bound, we instead choose $\omega$ so that $\omega \langle \psi(\zeta),w \rangle = |\langle \psi(\zeta),w \rangle|$.
\end{proof}

We can now establish the following analogue of Proposition \ref{prop:maximal_function_weighted_Dirichlet}.
It is the key step in the proof of the weak-type inequality for the maximal function.

\begin{proposition}
  \label{prop:maximal_function_L1}
  Let $\alpha > 1$. There exists a constant $C_\alpha$, depending only on $\alpha$, such that
  for all $\mu \in P(\partial \mathbb{B}_d)$ and for all $f \in \PHDA$, we have
  \begin{equation*}
    \|M_\alpha f\|_{L^1(\mu)} \le C_\alpha \mathcal{E}(\mu)^{1/2} \|f\|_{\PHDA}.
  \end{equation*}
\end{proposition}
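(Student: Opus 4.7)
The plan is to follow the three-step scheme from the proof of Proposition \ref{prop:maximal_function_weighted_Dirichlet}: linearize the maximal function via Lemma \ref{lem:linearization}, compute the norm of the linearized operator $T_\psi$ by the $TT^*$-formula of Lemma \ref{lem:TTstar}, and then control the resulting double integral by the energy. The crucial new difficulty is that the easy bound $|K| \approx \Re K$ that was available in the (weighted) Dirichlet case fails for the Drury--Arveson kernel, so the final comparison must exploit the refined estimate of Lemma \ref{lem:Koranyi_estimate_2}. After the reductions in Steps 1--2, the goal is to show that for every admissible $\psi$ one has
\[
    I_\psi := \int_{\partial \mathbb{B}_d} \int_{\partial \mathbb{B}_d} k(\psi(\zeta), \psi(\eta))\, d\mu(\zeta)\, d\mu(\eta) \lesssim_\alpha \mathcal{E}(\mu).
\]

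Set $r_\zeta = \|\psi(\zeta)\|$. By the symmetry of $k$, it suffices to work on the region $\{r_\zeta \ge r_\eta\}$. Three successive reductions, each preserving comparability up to constants depending only on $\alpha$, bring $k(\psi(\zeta), \psi(\eta))$ to a dilated symmetric expression. First, Lemma \ref{lem:Koranyi_estimate_2} with $w = \psi(\eta)$ (legitimate since $\|w\| = r_\eta \le r_\zeta$) gives $k(\psi(\zeta), \psi(\eta)) \approx k(\zeta, \psi(\eta))$. Second, a direct calculation using $|\langle \zeta, \psi(\eta) \rangle| \le r_\eta$ shows that numerators and denominators are comparable, so $k(\zeta, \psi(\eta)) \approx k(r_\eta \zeta, \psi(\eta))$. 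Third, since $\|r_\eta \zeta\| = r_\eta = \|\psi(\eta)\|$, a second application of Lemma \ref{lem:Koranyi_estimate_2} with the roles of $\zeta$ and $\eta$ swapped yields $k(r_\eta \zeta, \psi(\eta)) \approx k(r_\eta \zeta, \eta)$, and the latter is identically $k(\sqrt{r_\eta}\, \zeta, \sqrt{r_\eta}\, \eta)$ by direct substitution. Combining, on $\{r_\zeta \ge r_\eta\}$ we get
\[
    k(\psi(\zeta), \psi(\eta)) \lesssim_\alpha k(\sqrt{r_\eta}\, \zeta, \sqrt{r_\eta}\, \eta).
\]

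Next, the identity $\int k(s\zeta, s\eta)\, d\mu(\zeta) = f_\mu(s^2 \eta)$, where $f_\mu(z) = \int k(z, w)\, d\mu(w)$ is the potential of $\mu$ (a consequence of Proposition \ref{prop:energy_fa}), yields $I_\psi \lesssim \int f_\mu(r_\eta \eta)\, d\mu(\eta)$. Since $r_\eta \eta \in D_2(\eta)$, this is at most $\int M_{\alpha'} f_\mu\, d\mu$ with $\alpha' = \max(\alpha, 2)$, and Proposition \ref{prop:energy_fa} gives $\|f_\mu\|_{\PHDA}^2 = \mathcal{E}(\mu)$. Thus the right-hand side is exactly the quantity the theorem is meant to bound, applied to $f_\mu$, and \textbf{the main obstacle is closing this loop without circularity}. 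The plan for doing so is a standard bootstrap: by the monotonicity $M_\alpha \le M_\beta$ for $\alpha \le \beta$ it suffices to treat $\alpha \ge 2$, so $\alpha' = \alpha$. Setting
\[
    A := \sup\Bigl\{ \|M_\alpha f\|_{L^1(\mu)}/\|f\|_{\PHDA} : 0 \ne f \in \PHDA \Bigr\},
\]
the chain of inequalities reads $A^2 \lesssim_\alpha A \cdot \mathcal{E}(\mu)^{1/2}$, which gives $A \lesssim_\alpha \mathcal{E}(\mu)^{1/2}$ as soon as $A < \infty$. Finiteness of $A$ is verified separately, e.g.\ by first running the whole argument with $f$ replaced by the dilation $f_r = f(r\cdot)$ for $r < 1$, for which $M_\alpha f_r \in L^\infty \subset L^1(\mu)$, and then letting $r \nearrow 1$ by means of Fatou's lemma and Lemma \ref{lem:pluriharmonic_RKHS_basic}(c).
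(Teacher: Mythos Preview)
Your argument is correct in spirit, but it is more circuitous than necessary, and the finiteness step as written is not quite right.  The paper closes the loop already after your step (a): once Lemma~\ref{lem:Koranyi_estimate_2} gives $k(\psi(\zeta),\psi(\eta))\lesssim_\alpha k(\zeta,\psi(\eta))$ on $\{r_\zeta\ge r_\eta\}$, integrating in $\zeta$ yields
\[
  \|T_\psi\|^2 \;\lesssim_\alpha\; \int_{\partial\mathbb B_d} f_\mu(\psi(\eta))\,d\mu(\eta)
  \;=\; \int_{\partial\mathbb B_d} (T_\psi f_\mu)(\eta)\,d\mu(\eta)
  \;\le\; \|T_\psi\|\,\|f_\mu\|_{\PHDA}.
\]
Because $\psi$ takes values in a compact subset of $\mathbb B_d$, the operator $T_\psi$ is \emph{a priori} bounded, so one may divide and obtain $\|T_\psi\|\lesssim_\alpha \mathcal E(\mu)^{1/2}$ directly.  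No global supremum $A$ and no separate finiteness argument are needed.  Your reductions (b) and (c) are valid, but they trade $\psi(\eta)$ for $r_\eta\eta$, which forces the bootstrap up to the level of $A=\sup_\psi\|T_\psi\|$ and creates the finiteness issue.

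Regarding that issue: replacing $f$ by $f_r$ does not by itself show $A<\infty$, since as $f$ ranges over $\PHDA$ so does $f_r$ (densely), and the uniform bound $M_\alpha f_r\le C_r\|f\|$ controls $\|M_\alpha f_r\|_{L^1(\mu)}/\|f\|$, not $\|M_\alpha f_r\|_{L^1(\mu)}/\|f_r\|$.  A clean fix within your scheme is to observe that the map $\psi'(\eta)=r_\eta\eta$ is itself an admissible linearization (for $\alpha>2$) with compact range, and that applying your chain again to $\psi'$ returns $\psi''=\psi'$; hence $\|T_{\psi'}\|^2\lesssim_\alpha\|T_{\psi'}\|\,\|f_\mu\|$ with $\|T_{\psi'}\|<\infty$, which closes exactly as in the paper.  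Either way, the essential point you missed is that the self-improving inequality can (and should) be formulated for a fixed $T_\psi$, whose boundedness is automatic.
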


\begin{proof}
  As before, we consider the linearized operator $T_\psi: \PHDA \to L^1(\mu)$,
  where $\psi: \partial \mathbb{B}_d\to \mathbb{B}_d $ is measurable, takes values in a compact subset of $\mathbb{B}_d$
  and satisfies $\psi(\zeta) \in D_\alpha(\zeta)$ for all $\zeta \in \partial \mathbb{B}_d$.
  We will show that the operator norm of $T_\psi$ is bounded by $C_\alpha \mathcal{E}(\mu)^{1/2}$, where $C_\alpha$ is a constant that only depends on $\alpha$.

  Let $k$ be the reproducing kernel of $\PHDA$.
  To estimate the operator norm of $T_\psi$, we apply Lemma \ref{lem:TTstar} to find that
  \begin{equation*}
    \|T_\psi\|_{\PHDA \to L^1(\mu)}^2   = \int_{\partial \mathbb{B}_d} \int_{\partial \mathbb{B}_d} k(\psi(\zeta),\psi(\eta)) \, d \mu(\eta) d \mu(\zeta).
  \end{equation*}
  We break up the integral into the regions $A =\{ \|\psi(\zeta)\| \ge \|\psi(\eta)\| \}$ and $ B = \{ \|\psi(\zeta)\| < \|\psi(\eta)\| \}$.
  Lemma \ref{lem:Koranyi_estimate_2} shows that
  \begin{align*}
    \iint_A k(\psi(\zeta), \psi(\eta)) d \mu(\eta) d \mu(\zeta)
    &\lesssim \iint_A k( \zeta, \psi(\eta)) d \mu(\eta) d  \mu(\zeta) \\
    &\le \int_{\partial \mathbb{B}_d} \int_{\partial \mathbb{B}_d} k(\psi(\eta), \zeta) \, d \mu(\zeta) d \mu(\eta),
  \end{align*}
  where the implied constant only depends on $\alpha$. By symmetry, the same estimate holds for the integral over $B$.
  Hence, recalling that $f_\mu \in \PHDA$ denotes the potential of $\mu$, we find that
  \begin{align*}
    \|T_\psi\|_{\PHDA \to L^1(\mu)}^2 &\lesssim \int_{\partial \mathbb{B}_d} \int_{ \partial \mathbb{B}_d} k(\psi(\eta), \zeta) \, d \mu(\zeta) d \mu(\eta) \\
                                               &= \Big| \int_{\partial \mathbb{B}_d} f_\mu(\psi(\eta)) \, d \mu(\eta) \Big| \\
                                               &\le \|T_\psi\|_{\PHDA \to L^1(\mu)} \|f_\mu\|_{\PHDA}.
  \end{align*}
  Proposition \ref{prop:energy_fa} shows that $\|f_\mu\|_{\PHDA} = \mathcal{E}(\mu)^{1/2}$,
  so we conclude that
  \begin{equation*}
    \|T_\psi\|_{\PHDA \to L^1(\mu)} \lesssim \mathcal{E}(\mu)^{1/2}.
  \end{equation*}
  The result now follows from Lemma \ref{lem:linearization}.
\end{proof}

With Proposition \ref{prop:maximal_function_L1} in hand, the proof of the weak-type inequality now proceeds in a standard manner; cf.\ the proof of \cite[Theorem 3.2.5]{EKM+14}.

\begin{proof}[Proof of Theorem \ref{thm:weak_type}]
  Let $t > 0$ and let $f \in \PHDA$. Let $F$ be a compact subset of $\{M_\alpha f > t \}$
  and let $\mu$ be a Borel probability measure supported on $F$. Then by Proposition \ref{prop:maximal_function_L1},
  \begin{equation*}
    t \le \int_{\partial \mathbb{B}_d} M_\alpha f d \mu \le C_\alpha \mathcal{E}(\mu)^{1/2} \|f\|_{\PHDA}.
  \end{equation*}
  Thus,
  \begin{equation*}
    \ccap(F) = \sup \Big\{ \frac{1}{\mathcal{E}(\mu)} : \mu \in \mathcal{P}(F) \Big\}
    \le C_\alpha^2 \frac{\|f\|^2_{\PHDA}}{t^2}.
  \end{equation*}
  Since $F$ was an arbitrary compact subset of $\{M_\alpha f > t\}$, the result follows from the definition of (inner) capacity;
  the inner capacity agrees with the outer capacity since $\{M_\alpha f > t \}$ is open.
\end{proof}

The existence of \Kor\ limits now follows from the weak-type estimate in the usual way,
since $\PHDA$ contains a dense subspace of functions that are continuous on $\overline{\mathbb{B}_d}$.
More explicitly, we have the following result, which implies Theorem \ref{thm:boundary_limits_intro}
since $H^2_d \subset \PHDA$ and since the sets of capacity zero agree for both spaces.

\begin{theorem}
  \label{thm:h^2_d_boundary}
  For each $f \in \PHDA$, there exists a Borel set $E \subset \partial \mathbb{B}_d$ with $\ccap^*_{h^2_d}(E) = 0$
  such that $f$ has a \Kor\ limit at every point in $\partial \mathbb{B}_d \setminus E$.
\end{theorem}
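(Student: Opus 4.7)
The plan is to combine the capacitary weak-type inequality of Theorem \ref{thm:weak_type} with density in $\PHDA$ of functions that are continuous up to the boundary, imitating the classical deduction of Fatou's theorem from a maximal inequality (see e.g.\ the proof of \cite[Theorem 3.2.5]{EKM+14}).

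I would introduce the \Kor\ oscillation of $h \in C(\mathbb{B}_d)$ at $\zeta \in \partial \mathbb{B}_d$,
\[
  \Omega_\alpha h(\zeta) = \limsup_{\substack{z, z' \to \zeta \\ z, z' \in D_\alpha(\zeta)}} |h(z) - h(z')|,
\]
and use three elementary facts: $\Omega_\alpha h \le 2 M_\alpha h$ pointwise; $\Omega_\alpha g \equiv 0$ for $g \in C(\overline{\mathbb{B}_d})$; and the vanishing of $\Omega_{\alpha_m} f(\zeta)$ along a cofinal sequence $\alpha_m \nearrow \infty$, combined with $M_{\alpha_1} f(\zeta) < \infty$, forces the existence of a finite \Kor\ limit of $f$ at $\zeta$. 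By Lemma \ref{lem:pluriharmonic_RKHS_basic}(b) the pluriharmonic polynomials, which extend continuously to $\overline{\mathbb{B}_d}$, are dense in $\PHDA$. For $f \in \PHDA$, pick pluriharmonic polynomials $g_n$ with $\|f - g_n\|_{\PHDA} \to 0$; the second fact gives $\Omega_\alpha f = \Omega_\alpha(f - g_n)$ and together with the first yields
\[
  \{\Omega_\alpha f > \epsilon\} \subset \{M_\alpha(f - g_n) > \epsilon/2\},
\]
whose right-hand side is open by lower semicontinuity of $M_\alpha$. Theorem \ref{thm:weak_type} then bounds the outer capacity of the right-hand side by $4 C_\alpha^2 \|f - g_n\|_{\PHDA}^2 / \epsilon^2$, which tends to $0$; hence $\ccap^*(\{\Omega_\alpha f > \epsilon\}) = 0$ for every $\epsilon > 0$ and every $\alpha > 1$. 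Also, $\{M_\alpha f = \infty\} \subset \{M_\alpha f > n\}$ for every $n$, and Theorem \ref{thm:weak_type} directly gives $\ccap^*(\{M_\alpha f = \infty\}) = 0$.

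To finish, I would collect these null sets using countable subadditivity of the outer capacity (from the appendix on abstract capacities): first take $\epsilon = 1/k$ and union over $k$ to obtain $\ccap^*(\{\Omega_\alpha f > 0\}) = 0$, then take a countable union over a cofinal sequence $\alpha_m \nearrow \infty$, and finally adjoin $\{M_{\alpha_1} f = \infty\}$; outer regularity allows the resulting exceptional set to be enlarged to a Borel ($G_\delta$) superset of the same outer capacity.

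The one point that requires care is the appeal to countable subadditivity of the outer capacity — which is precisely why the conclusion is phrased in terms of outer, rather than inner, capacity — but once the abstract capacity machinery of the appendix is available, the remainder is routine.
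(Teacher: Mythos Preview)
Your proposal is correct and follows essentially the same approach as the paper: define an oscillation quantity that vanishes for continuous approximants, dominate its superlevel sets by those of the maximal function, apply the weak-type inequality (Theorem~\ref{thm:weak_type}), and take countable unions via subadditivity of the outer capacity. The paper's proof differs only cosmetically---it splits $f$ into real and imaginary parts and uses the dilates $f_r$ rather than pluriharmonic polynomials as the continuous approximants---and your inclusion of $\{M_{\alpha_1} f = \infty\}$ is harmless but redundant, since $\Omega_\alpha f(\zeta) = 0$ already forces the Cauchy criterion and hence a finite limit.
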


\begin{proof}
  By decomposing $f$ into real and imaginary part and using (P2) of Proposition \ref{prop:abstract_capacity}, it suffices to consider the case of real valued functions.
  Let $\alpha > 1$.
  For real-valued $f \in \PHDA$ define
  \begin{equation*}
    \beta_f(\zeta) = \limsup_{z \to \zeta, z \in D_\alpha(\zeta)} f(z) - \liminf_{z \to \zeta, z \in D_{\alpha}(\zeta)} f(z).
  \end{equation*}
  Just like the maximal function, the function $\beta_f$ is Borel.
  Let $t > 0$.
  We will show that $\{\beta_f > t\}$ has (outer) capacity zero.
  To this end, note that for each $0 \le r < 1$, the function $f_r$ is continuous on $\overline{\mathbb{B}_d}$, so
  $\beta_f = \beta_{f - f_r}$. Moreover, $\{\beta_{f - f_r} > t \} \subset \{2 M_\alpha (f - f_r) > t \}$,
  and the set on the right is open. Hence Theorem \ref{thm:weak_type} implies that
  \begin{equation*}
    \ccap^*_{h^2_d} \{\beta_f > t \} = \ccap^*_{h^2_d} \{\beta_{f - f_r} > t \} \le \ccap^*_{h^2_d} \{ 2 M_\alpha (f - f_r) > t \} \lesssim \frac{\|f - f_r\|^2}{t^2}.
  \end{equation*}
  Letting $r \to 1$, we find that $\ccap^*_{h^2_d} \{ \beta_f > t \} = 0$
  by Lemma \ref{lem:pluriharmonic_RKHS_basic} (c).
  Recall that $\beta_f$ depends on $\alpha$ and define $E_\alpha = \{ \beta_f \neq 0 \}$.
  Then $\ccap^*_{h^2_d}(E_\alpha) = 0$ by $(P2)$ of Proposition \ref{prop:abstract_capacity},
  and for every $\zeta  \in \partial \mathbb{B}_d \setminus E_\alpha$, the limit $\lim_{z \to \zeta, z \in D_\alpha(\zeta)} f(z)$ exists.
  Finally, let $E = \bigcup_{\alpha > 1} E_\alpha$. Then $\ccap^*_{h^2_d}(E) = 0$,
  and $f$ has \Kor\ limits at all points $\zeta \in \partial \mathbb{B}_d \setminus E$.
\end{proof}

\begin{remark}
\label{rem:AC_new_proof}
    Using Proposition \ref{prop:maximal_function_weighted_Dirichlet} in place of Proposition \ref{prop:maximal_function_L1} and arguing as the proof of Theorem \ref{thm:weak_type}, we obtain another proof of the weak type inequality
    for the maximal function in the space $\mathcal D_a$ on the ball.
    Exactly as in the proof of Theorem \ref{thm:h^2_d_boundary}, this then
    yields the Hilbert space case of a theorem of Ahern and Cohn \cite{AC89},
    namely
    that every function in $\mathcal D_a$ has a \Kor\ limit outside
    of a Borel set of outer $\mathcal D_a$-capacity zero.
\end{remark}

\section{Radial limits in more general spaces}

In this section, we establish another result about the existence of radial limits.
Compared to Theorem \ref{thm:h^2_d_boundary}, it applies to every unitarily invariant space of pluriharmonic functions whose kernel has non-negative real part, but only yields a weaker notion of convergence.

We begin with the following application of Lemma \ref{lem:TTstar}.

\begin{lemma}
  \label{lem:L^1_energy}
  Let $\mathcal{H}$ be a unitarily invariant space of pluriharmonic functions with non-negative kernel on $\mathbb{B}_d$
  and let $\mu \in P(\partial \mathbb{B}_d)$ with $\mathcal{E}(\mu) < \infty$. Then for every $f \in \mathcal{H} \cap C(\overline{\mathbb{B}_d})$,
  we have
  \begin{equation*}
    \|f\|_{L^1(\mu)} \le \mathcal{E}(\mu)^{1/2} \|f\|_{\mathcal{H}}.
  \end{equation*}
\end{lemma}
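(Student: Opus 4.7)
The plan is to apply Lemma \ref{lem:TTstar} to the family of radial dilation maps
$\psi_r(\zeta) = r \zeta$ for $r \in [0,1)$ and then pass to the limit $r \to 1$, using
the continuity of $f$ on $\overline{\mathbb{B}_d}$ to transfer the estimate from $f_r$ to $f$ itself.

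First, for each fixed $r \in [0,1)$, the map $\psi_r : \partial \mathbb{B}_d \to \mathbb{B}_d$
is continuous and takes values in the compact set $\{z : \|z\| = r\} \subset \mathbb{B}_d$,
so Lemma \ref{lem:TTstar} applies. The associated linearized operator sends $f \in \mathcal{H}$
to the restriction of $f_r$ to $\partial \mathbb{B}_d$, and the lemma identifies its squared
operator norm with
\begin{equation*}
  \int_{\partial \mathbb{B}_d} \int_{\partial \mathbb{B}_d} k(r \zeta, r \eta) \, d \mu(\eta) d \mu(\zeta) = \mathcal{E}_r(\mu) \le \mathcal{E}(\mu).
\end{equation*}
Consequently, for every $f \in \mathcal{H}$,
\begin{equation*}
  \|f_r\|_{L^1(\mu)} \le \mathcal{E}(\mu)^{1/2} \|f\|_{\mathcal{H}}.
\end{equation*}

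Next, assume $f \in \mathcal{H} \cap C(\overline{\mathbb{B}_d})$. By uniform continuity of $f$
on the compact set $\overline{\mathbb{B}_d}$, we have $f_r \to f$ uniformly on $\overline{\mathbb{B}_d}$
as $r \to 1$; in particular $\|f_r - f\|_{L^1(\mu)} \to 0$ since $\mu$ is a probability measure.
Letting $r \to 1$ in the displayed inequality therefore yields
\begin{equation*}
  \|f\|_{L^1(\mu)} \le \mathcal{E}(\mu)^{1/2} \|f\|_{\mathcal{H}},
\end{equation*}
which is the claim.

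There is no essential obstacle here: the only thing one needs to verify is that the
double integral appearing in Lemma \ref{lem:TTstar} for $\psi = \psi_r$ literally coincides
with $\mathcal{E}_r(\mu)$, which is immediate since $\mu$ is supported on $\partial \mathbb{B}_d$
and therefore the ambient integration over $\overline{\mathbb{B}_d}$ in Definition \ref{defn:energy}
reduces to integration over $\partial \mathbb{B}_d$.
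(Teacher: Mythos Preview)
Your proof is correct and follows essentially the same approach as the paper: apply Lemma \ref{lem:TTstar} to the radial dilation map $\psi_r(\zeta)=r\zeta$, identify the resulting double integral as $\mathcal{E}_r(\mu)\le\mathcal{E}(\mu)$, and then use uniform convergence of $f_r\to f$ on $\overline{\mathbb{B}_d}$ to pass to the limit. The paper's proof is nearly identical, just with slightly terser notation (it writes $T_r$ directly rather than introducing $\psi_r$).
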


\begin{proof}
  Let $0 \le r < 1$ and let
  \begin{equation*}
    T_r: \mathcal{H} \to L^1(\mu), \quad f \mapsto f_r.
  \end{equation*}
  Then $T_r$ is bounded and Lemma \ref{lem:TTstar} shows that
  \begin{equation*}
    \|T_r\|^2 = \int_{\partial \mathbb{B}_d} \int_{\partial \mathbb{B}_d} k(r z, r w) \, d \mu(w) d \mu(z)
    \le \mathcal{E}(\mu).
  \end{equation*}
  Since $f_r$ converges to $f$ uniformly on $\partial \mathbb{B}_d$, we conclude that
  \begin{equation*}
    \|f\|_{L^1(\mu)} = \lim_{r \nearrow 1} \|f_r\|_{L^1(\mu)} \le \sup_{0 \le r < 1} \|T_r\| \|f\|_{\mathcal{H}}
      \le \mathcal{E}(\mu)^{1/2} \|f\|_{\mathcal{H}}. \qedhere
  \end{equation*}
\end{proof}

We can now establish the announced result about radial limits.

\begin{theorem}
  \label{thm:convergence_in_capacity}
  Let $\mathcal{H}$ be a unitarily invariant space of pluriharmonic functions
  on $\mathbb B_d$ whose kernel has non-negative real part
  and let $f \in \mathcal{H}$.
  Then there exists a Borel measurable function $f^*: \partial \mathbb{B}_d \to \mathbb{C}$
  such that \emph{$(f_r)$ converges to $f^*$ in capacity} as $r \nearrow 1$, i.e.\ for every $\varepsilon > 0$,
  we have
  \begin{equation*}
    \ccap^*_{\mathcal{H}} ( \{|f^* - f_r| \ge \varepsilon \}) \to 0 \quad \text{ as } r \nearrow 1.
  \end{equation*}
  The function $f^*$ is uniquely determined outside of a Borel set of outer capacity zero.
  Moreover, there exists a sequence $r_n$ tending to $1$ such that $(f_{r_n})$ converges
  to $f^*$ pointwise outside of a Borel set of outer capacity zero.
\end{theorem}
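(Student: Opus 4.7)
The plan is to mimic the classical argument that $L^2$ convergence forces convergence in measure and yields an almost-everywhere convergent subsequence, with $L^2$ replaced by $\mathcal{H}$ and ``measure'' by ``capacity''. The starting point is a capacitary weak-type estimate
\begin{equation*}
\ccap^*_{\mathcal H}(\{|g| > \varepsilon\}) \le \frac{\|g\|^2_{\mathcal H}}{\varepsilon^2}, \qquad g \in \mathcal{H} \cap C(\overline{\mathbb{B}_d}),\ \varepsilon > 0,
\end{equation*}
which drops directly out of Lemma \ref{lem:L^1_energy}: the relevant level set is relatively open (so its outer capacity equals its inner capacity), and for any compact $F$ inside it and any $\mu \in P(F)$ one has $\varepsilon \le \|g\|_{L^1(\mu)} \le \mathcal{E}(\mu)^{1/2}\|g\|_{\mathcal{H}}$, whence $\ccap(F) \le \|g\|^2_{\mathcal H}/\varepsilon^2$.

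Given $f \in \mathcal{H}$, use Lemma \ref{lem:pluriharmonic_RKHS_basic}(c) to choose $r_n \nearrow 1$ with $\|f - f_{r_n}\|_{\mathcal{H}} \le 4^{-n}$. Each dilate $f_{r_n}$ is continuous on $\overline{\mathbb{B}_d}$ and $\|f_{r_{n+1}} - f_{r_n}\|_{\mathcal H} \le 2\cdot 4^{-n}$, so the weak-type bound gives
\begin{equation*}
\ccap^*\bigl(\{|f_{r_{n+1}} - f_{r_n}| > 2^{-n}\}\bigr) \le 4\cdot 4^{-n}.
\end{equation*}
By countable subadditivity of outer capacity (Proposition \ref{prop:abstract_capacity}), the sets $E_N = \bigcup_{n \ge N}\{|f_{r_{n+1}} - f_{r_n}| > 2^{-n}\}$ have outer capacities tending to $0$, so $E = \bigcap_N E_N$ is a Borel set of outer capacity zero. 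Off $E$ the telescoping tails are absolutely summable, so $(f_{r_n})$ converges pointwise; define $f^*$ as this pointwise limit (and $0$ on $E$). This already delivers the last conclusion of the theorem.

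To promote this to convergence of the full family $(f_r)$ in capacity, fix $\varepsilon > 0$ and decompose
\begin{equation*}
\{|f^* - f_r| > \varepsilon\} \subset \{|f^* - f_{r_n}| > \varepsilon/2\} \cup \{|f_{r_n} - f_r| > \varepsilon/2\}.
\end{equation*}
Choosing $N$ with $\sum_{m \ge N} 2^{-m} < \varepsilon/2$, the first set on the right is contained in $E_N$ for $n \ge N$, which has small outer capacity; the outer capacity of the second set is bounded by $4\|f_{r_n} - f_r\|^2_{\mathcal H}/\varepsilon^2$, which tends to zero as $r, r_n \to 1$ by Lemma \ref{lem:pluriharmonic_RKHS_basic}(c). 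Uniqueness of $f^*$ up to outer capacity zero follows from the same two-term decomposition applied to any two candidate limits. No single step here is genuinely hard; the main thing to get right is that every manipulation only invokes properties of outer capacity---monotonicity, countable subadditivity, and agreement with inner capacity on open sets---that are actually established in the abstract appendix, and the entire argument rests on the one-line weak-type estimate above.
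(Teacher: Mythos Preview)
Your argument is correct and follows essentially the same route as the paper: both proofs rest on the same capacitary weak-type inequality derived from Lemma~\ref{lem:L^1_energy}, establish that $(f_r)$ is Cauchy in capacity, extract a pointwise convergent subsequence, and finish with the two-term decomposition. The only real difference is cosmetic: the paper packages the passage from ``Cauchy in capacity'' to ``limit exists and a subsequence converges pointwise off a null set'' as a citation to Meyer, whereas you spell out the Borel--Cantelli argument explicitly, which is arguably more self-contained.

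One small omission: Lemma~\ref{lem:L^1_energy} as stated requires a \emph{non-negative} (hence real-valued) kernel, while the theorem only assumes $\Re K \ge 0$. The paper handles this in its first line by passing to the space with kernel $\Re K$, into which $\mathcal{H}$ embeds and for which the capacities coincide; you should add this one-line reduction before invoking the lemma.
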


\begin{proof}
  By replacing the kernel with its real part, we may assume without loss of generality that the kernel is real valued.
  (Note that $\mathcal H$ embeds into the space whose kernel is the real part of the original kernel, and the capacities are the same.)
  
  We show that $(f_r)$ satisfies a Cauchy condition with respect to convergence in capacity.
  Let $\varepsilon > 0$ and let $0 \le r,s < 1$.
  Let $\mu$ be a Borel probability measure supported on the compact set $\{|f_s - f_r| \ge \varepsilon \}$.
  Then Lemma \ref{lem:L^1_energy} shows that
  \begin{equation*}
    \varepsilon \le \int_{\partial \mathbb{B}_d} |f_s - f_r| \,d \mu
    \le \mathcal{E}(\mu)^{1/2} \|f_s - f_r\|_{\mathcal{H}}.
  \end{equation*}
  By definition of capacity, this means that
  \begin{equation*}
    \ccap_{\mathcal{H}} ( \{ |f_s - f_r| \ge \varepsilon \})
    \le \frac{\|f_s - f_r\|_{\mathcal{H}}^2}{\varepsilon^2}.
  \end{equation*}
  By capacitability of compact sets ($(P4)$ of Proposition \ref{prop:abstract_capacity}),
  the same estimate holds for the outer capacity. It follows that
  \begin{equation}
    \label{eqn:Cauchy_capacity}
    \lim_{r,s \to 1} \ccap^*_{\mathcal{H}} \{ |f_s - f_r| \ge \varepsilon \} = 0
  \end{equation}
  for all $\varepsilon > 0$.
  In this setting, we obtain a function $f^*: \partial \mathbb{B}_d \to \mathbb{C}$
  and a sequence $(r_n)$ tending to $1$ such that $(f_{r_n})$
  tends to $f^*$ in capacity and pointwise outside of a set of outer capacity zero;
  see \cite{Meyer76}. Moreover, the function $f^*$ is uniquely determined outside of a set
  of outer capacity zero.
  (The proof of these particular assertions is essentially the same as that
  of the corresponding statements
  for convergence in measure, it only uses monotonicity and countable subadditivity of outer
  capacity; see e.g.\ \cite[Theorem 2.30]{Folland99}).
  Finally, for all $\varepsilon > 0$ and $0 \le r < 1$, we have
  \begin{equation*}
    \ccap_{\mathcal{H}}^* \{ |f^* - f_r| \ge \varepsilon \}
    \le \ccap_{\mathcal{H}}^* \Big\{ |f^* - f_{r_n}| \ge \frac{\varepsilon}{2} \Big\}
    + \ccap_{\mathcal{H}}^* \Big\{ |f_{r_n} - f_r| \ge \frac{\varepsilon}{2} \Big\}.
  \end{equation*}
  For sufficiently large $n$ and $r$ close to $1$, both summands
  can be made arbitrarily small (the second summand by \eqref{eqn:Cauchy_capacity}),
  which finishes the proof.
\end{proof}

The following result connects the boundary function $f^*$ to $L^1$ spaces
and the integration functionals $\rho_\mu$.
It will be useful in the next section.

\begin{proposition}
  \label{prop:boundary_L^1}
  Let $\mathcal{H}$ be a unitarily invariant space of pluriharmonic functions $\mathbb{B}_d$ whose kernel has non-negative real part.
  Let $\mu \in P(\partial \mathbb{B}_d)$ with $\mathcal{E}(\mu) < \infty$.
  If $f \in \mathcal{H}$, then $f^* \in L^1(\mu)$ with
  \begin{equation*}
    \|f^*\|_{L^1(\mu)} \le \mathcal{E}(\mu)^{1/2} \|f\|_{\mathcal{H}}.
  \end{equation*}
  Moreover,
  \begin{equation*}
    \rho_\mu(f) = \int_{\partial \mathbb{B}_d} f^* \,d \mu.
  \end{equation*}
\end{proposition}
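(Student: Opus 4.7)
My plan is to realize $f^*$ as the $L^1(\mu)$-limit of a sequence of dilations $f_{r_n}$ coming from Theorem \ref{thm:convergence_in_capacity}, and then transfer the estimate of Lemma \ref{lem:L^1_energy} through this limit.

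The first step is to fix a sequence $r_n \nearrow 1$ such that $f_{r_n} \to f^*$ pointwise outside a Borel set $E \subset \partial \mathbb{B}_d$ of outer $\mathcal{H}$-capacity zero; such a sequence exists by Theorem \ref{thm:convergence_in_capacity}. Outer capacity zero entails inner capacity zero by monotonicity of $\ccap$ under compact subsets, so Lemma \ref{lem:capacity_zero} applied to $\mu$, which has finite energy, gives $\mu(E) = 0$. Hence $f_{r_n} \to f^*$ $\mu$-almost everywhere.

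Next, each $f_{r_n}$ lies in $\mathcal{H} \cap C(\overline{\mathbb{B}_d})$, so Lemma \ref{lem:L^1_energy} applied to the differences gives
\[
  \|f_{r_n} - f_{r_m}\|_{L^1(\mu)} \le \mathcal{E}(\mu)^{1/2} \|f_{r_n} - f_{r_m}\|_{\mathcal{H}}.
\]
By Lemma \ref{lem:pluriharmonic_RKHS_basic}(c), $f_r \to f$ in $\mathcal{H}$, so $(f_{r_n})$ is Cauchy in $\mathcal{H}$ and therefore also Cauchy in $L^1(\mu)$, with some limit $g \in L^1(\mu)$. Passing to a further subsequence, $L^1(\mu)$-convergence yields pointwise $\mu$-a.e.\ convergence, which forces $g = f^*$ $\mu$-a.e. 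In particular $f^* \in L^1(\mu)$ and $f_{r_n} \to f^*$ in $L^1(\mu)$. Applying Lemma \ref{lem:L^1_energy} directly to $f_{r_n}$, using $\|f_{r_n}\|_{\mathcal{H}} \le \|f\|_{\mathcal{H}}$ from Lemma \ref{lem:pluriharmonic_RKHS_basic}(c) and letting $n \to \infty$, yields the required norm bound
\[
  \|f^*\|_{L^1(\mu)} \le \mathcal{E}(\mu)^{1/2} \|f\|_{\mathcal{H}}.
\]

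For the integration formula, Proposition \ref{prop:energy_fa} gives that $\rho_\mu$ extends to a bounded linear functional on $\mathcal{H}$ that agrees with ordinary integration against $\mu$ on $\mathcal{H} \cap C(\overline{\mathbb{B}_d})$. Therefore
\[
  \rho_\mu(f) = \lim_{n \to \infty} \rho_\mu(f_{r_n}) = \lim_{n \to \infty} \int_{\partial \mathbb{B}_d} f_{r_n} \, d\mu = \int_{\partial \mathbb{B}_d} f^* \, d\mu,
\]
where the first equality uses continuity of $\rho_\mu$ together with $f_{r_n} \to f$ in $\mathcal{H}$, and the last uses $L^1(\mu)$-convergence. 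The only genuinely delicate point is reconciling the two modes of convergence: the pointwise convergence $f_{r_n} \to f^*$ comes from the capacity statement, while the $L^1(\mu)$-convergence comes from the energy estimate; these are forced to identify the same $f^*$ by the standard subsequence trick, and this identification is the linchpin of the whole argument.
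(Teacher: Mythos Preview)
Your proof is correct and follows essentially the same approach as the paper's. The only minor difference is that the paper first reduces to the case of a real-valued kernel (as in the proof of Theorem \ref{thm:convergence_in_capacity}) before invoking Lemma \ref{lem:L^1_energy}, which is stated only for spaces with non-negative (real) kernel; you apply the lemma directly without mentioning this reduction.
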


\begin{proof}
  We may again assue that the kernel is real valued.
  By Theorem \ref{thm:convergence_in_capacity}, there exists a Borel set $E \subset \partial \mathbb{B}_d$
  with $\ccap_{\mathcal{H}}^*(E) = 0$ and a sequence $(r_n)$ converging to $1$
  such that $f_{r_n}$ converges to $f^*$ pointwise on $\partial \mathbb{B}_d \setminus E$.
  Lemma \ref{lem:capacity_zero} implies that
  $\mu(E) = 0$, so $(f_{r_n})$ converges to $f^*$ almost everywhere with respect to $\mu$.
  On the other hand, an application of Lemma \ref{lem:L^1_energy} shows that
  $(f_{r_n})$ is Cauchy in $L^1(\mu)$. Thus, $(f_{r_n})$ converges to $f^*$ in $L^1(\mu)$.
  The estimate for $\|f^*\|_{L^1(\mu)}$ also follows from Lemma \ref{lem:L^1_energy}.
  Moreover, since $\rho_\mu$ is continuous by Proposition \ref{prop:energy_fa},
  we have
  \begin{equation*}
    \rho_\mu(f) = \lim_{n \to \infty} \rho_\mu(f_{r_n}) = \lim_{n\to\infty} \int_{\partial \mathbb{B}_d} f_{r_n} \, d \mu
    = \int_{\partial \mathbb{B}_d} f^* \,d \mu. \qedhere
  \end{equation*}
\end{proof}

\section{Applications to cyclicity}
\label{sec:cyclic}

A \emph{regular unitarily invariant space of holomorphic
functions} on $\mathbb B_d$ is a reproducing kernel Hilbert space $\mathcal H$
of functions on $\mathbb B_d$ whose reproducing kernel is of the form
\[
    K(z,w) = \sum_{n=0}^\infty a_n \langle z,w \rangle^n,
\]
where $a_n > 0$ for all $n \ge 0$, $a_0 = 1$ and $\lim_{n \to \infty} \frac{a_n}{a_{n+1}} = 1$.
These conditions ensure, among other things, that the polynomials are multipliers
of $\mathcal H$; see e.g.\ \cite[Subsection 2.1]{BHM18} for further discussion
of such spaces.

Let $\mathcal{H}$ be a regular unitarily invariant space of holomorphic functions on $\mathbb{B}_d$
whose kernel has non-negative real part.
Given $f \in \mathcal{H}$, we let
\begin{equation*}
  [f] = \overline{\{ \varphi f: \varphi \in \Mult(\mathcal{H}) \}} \subset \mathcal{H}
\end{equation*}
be the smallest multiplier invariant subspace of $\mathcal{H}$ containing $f$.
The function $f$ is said to be \emph{cyclic} if $[f] = \mathcal{H}$.

Since the polynomials are SOT-dense in $\Mult(\mathcal{H})$
(see e.g.\ the proof of \cite[Lemma 3.1]{BHM18}),
we have
\begin{equation*}
  [f] = \overline{ \{ p f: p \text{ polynomial} \}}.
\end{equation*}
Moreover, since $\Mult(\mathcal{H})$ is dense in $\mathcal{H}$, we see that $f$ is cyclic
if and only if $1 \in [f]$.

In the classical Dirichlet space $\mathcal{D}$, it is well known that a necessary condition
for cyclicity of a function $f \in \mathcal{D}$ is that the set 
\begin{equation*}
  \{\zeta \in \mathbb{D}: f^*(\zeta) = 0 \}
\end{equation*}
has logarithmic
capacity zero; see \cite[Theorem 5]{BS84} or \cite[Corollary 9.2.5]{EKM+14}.
Indeed, the functional analytic point of view on capacity shows that if $f \in \mathcal{H}$
extends to be continuous on $\overline{\mathbb{B}_d}$ and the zero set
\begin{equation*}
  \{\zeta \in \partial \mathbb{B}_d : f(\zeta) = 0 \}
\end{equation*}
has positive $\mathcal{H}$-capacity, then the zero set supports a probability
measure $\mu$ of finite $\mathcal{H}$-energy.
By Proposition \ref{prop:energy_fa}, the integration
functional $\rho_\mu$ is then a non-zero continuous linear functional, whose
kernel contains $p f$ for every polynomial $p$; whence $f$ is not cyclic.

The observation in the last paragraph can be generalized to functions in $\mathcal{H}$
that do not extend to be continuous on $\overline{\mathbb{B}_d}$, but some
care must be taken with the precise meaning of vanishing on subsets of the boundary.

Given $f \in \mathcal{H}$, we let $f^*: \partial \mathbb{B}_d \to \mathbb{C}$ denote the boundary function of Theorem \ref{thm:convergence_in_capacity}; if $\mathcal H = H^2_d$,
then $f^*$ agrees with the boundary function given by Theorem \ref{thm:h^2_d_boundary}
outside of a set of outer capacity zero.
We also write
\begin{equation*}
  Z(f^*) = \{\zeta \in \partial \mathbb{B}_d: f^*(\zeta) = 0 \}.
\end{equation*}
Given an arbitrary subset $E \subset \partial \mathbb{B}_d$, we say that $f^* = 0$
\emph{quasi-everywhere on $E$} if $\ccap^*(E \setminus Z(f^*)) = 0$.
(Since the boundary function $f^*$ is uniquely determined outside of a set of outer capacity zero,
different choices of boundary function can at most lead to sets $Z(f^*)$ differing by a set of outer capacity  zero).
We write
\begin{equation*}
  \mathcal{H}_E = \{f \in \mathcal{H}: f^* = 0 \text{ quasi-everywhere on } E \}.
\end{equation*}
If $\mathcal{H}$ is the Dirichlet space on the disc, then each $\mathcal{H}_E$ is a closed
multiplier invariant subspace; see \cite[Theorem 9.2.3]{EKM+14}.
We can now establish the Drury--Arveson space analogue of this fact.

\begin{theorem}
  \label{thm:inv_sub_quasi_everywhere}
  For every subset $E \subset \partial \mathbb{B}_d$, the space $(H^2_d)_E$
  is a closed multiplier invariant subspace of $H^2_d$.
\end{theorem}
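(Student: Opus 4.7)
The plan is to verify in turn the three defining properties: $(H^2_d)_E$ is a linear subspace, it is invariant under multiplication by elements of $\Mult(H^2_d)$, and it is closed in $H^2_d$. Throughout I rely on the fact that the boundary function $f^*$ is uniquely determined up to a set of outer $H^2_d$-capacity zero (Theorem \ref{thm:boundary_limits_intro}) and that outer capacity is countably subadditive (inherited from the Choquet property of the $H^2_d$-capacity).

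Linearity and multiplier invariance are essentially formal. If $f,g \in (H^2_d)_E$, then outside the union of two sets of outer capacity zero one has $(f+g)^* = f^* + g^* = 0$ on $E$, and scalar multiples are handled in the same way. For multiplier invariance, let $\varphi \in \Mult(H^2_d)$ and $f \in (H^2_d)_E$. Both $\varphi = \varphi \cdot 1$ and $\varphi f$ belong to $H^2_d$, hence both admit \Kor\ limits outside a set of outer capacity zero. At points where $f^*(\zeta)$, $\varphi^*(\zeta)$, and $(\varphi f)^*(\zeta)$ simultaneously exist, continuity of multiplication gives $(\varphi f)^*(\zeta) = \varphi^*(\zeta) f^*(\zeta)$, so $f^*(\zeta) = 0$ forces $(\varphi f)^*(\zeta) = 0$; hence $(\varphi f)^*$ vanishes quasi-everywhere on $E$.

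Closedness is the substantive step. Suppose $f_n \in (H^2_d)_E$ converges to $f$ in $H^2_d$ and pass to a subsequence, still denoted $(f_n)$, with $\|f - f_n\|_{H^2_d} \le 2^{-n}$. Fix some $\alpha > 1$ and apply the capacitary weak-type estimate of Theorem \ref{thm:weak_type} (available here because $H^2_d \subset \PHDA$ and the two spaces share the same outer-capacity-zero sets) to obtain
\[
    \ccap^*\bigl(\{M_\alpha(f - f_n) > 2^{-n/2}\}\bigr) \lesssim 2^{-n}.
\]
Write $A_n$ for this set; by countable subadditivity, $B := \limsup_n A_n$ has outer capacity zero. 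Outside $B$, and outside the countable union of exceptional sets governing the existence of the \Kor\ limits of $f$ and of each $f_n$, the pointwise bound $|f^*(\zeta) - f_n^*(\zeta)| \le M_\alpha(f - f_n)(\zeta)$ gives $f_n^*(\zeta) \to f^*(\zeta)$. Combined with the countable union of the exceptional sets $N_n \subset E$ on which $f_n^*$ fails to vanish, this produces a single set of outer capacity zero away from which $f^*(\zeta) = 0$ for every $\zeta \in E$, so $f \in (H^2_d)_E$.

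The main obstacle is precisely this transfer from $H^2_d$-norm convergence to pointwise quasi-everywhere convergence of the boundary functions; this is exactly what the weak-type inequality of Theorem \ref{thm:weak_type}, coupled with countable subadditivity of outer capacity, is designed to accomplish. Without the weak-type inequality one would only get convergence in capacity (as in Theorem \ref{thm:convergence_in_capacity}), which a priori is not enough to guarantee pointwise vanishing on $E$ in the limit.
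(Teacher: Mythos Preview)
Your proof is correct and rests on the same key ingredient as the paper's---the weak-type inequality of Theorem~\ref{thm:weak_type}---but the execution differs in two places. For multiplier invariance you argue directly that $(\varphi f)^* = \varphi^* f^*$ quasi-everywhere for arbitrary $\varphi \in \Mult(H^2_d)$; the paper instead checks only polynomial invariance (which is immediate since $(pf)^*=p f^*$ pointwise wherever the limits exist) and defers the extension to general multipliers until after closedness is proved, via SOT-density of polynomials. For closedness you run a Borel--Cantelli argument to extract a subsequence along which $f_n^*\to f^*$ quasi-everywhere; the paper avoids subsequences entirely by observing that for each fixed $t>0$,
\[
    \ccap^*\bigl(E\cap\{|f^*|>t\}\bigr) \le \ccap^*\bigl(\{|f^*-f_n^*|>t\}\bigr) \lesssim \frac{\|f-f_n\|^2}{t^2}
\]
and letting $n\to\infty$. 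Both routes are standard; the paper's is marginally shorter, while yours makes the mechanism (quasi-everywhere convergence of the boundary values) more explicit.
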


\begin{proof}
  The proof is essentially identical to that of \cite[Theorem 9.2.3]{EKM+14} using the weak-type
  inequality for capacity. Since the argument is short, we provide it for the convenience of the reader.

  It is clear that $(H^2_d)_E$ is a subspace and that $f \in (H^2_d)_E$ implies $p f \in (H^2_d)_E$ for all polynomials $p$.
  Since polynomials are SOT-dense in $\Mult(H^2_d)$, it suffices to show that $\mathcal{H}_E$ is closed.
  
  Let $(f_n)$ be a sequence in $(H^2_d)_E$ that converges to $f$ in $H^2_d$ and let $t >0$. Then by the weak-type inequality
  (Theorem \ref{thm:weak_type}), we have
  \begin{equation*}
  \ccap^*( E \cap \{|f^*| > t \}) \le \ccap^*( |f^* - f_n^*| > t \}) \lesssim \frac{\|f - f_n\|^2}{t^2}.
  \end{equation*}
  Taking the limit $n \to \infty$ and using that countable unions of sets of outer capacity zero
  have outer capacity zero, it follows that $f \in (H^2_d)_E$.
\end{proof}

The following corollary is quite immediate.

\begin{corollary}
  \label{cor:cyclic_outer_cap}
  Let $f \in H^2_d$ be cyclic. Then $\ccap^*(Z(f^*)) = 0$.
\end{corollary}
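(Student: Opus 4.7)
The plan is to exploit Theorem \ref{thm:inv_sub_quasi_everywhere} by taking $E = Z(f^*)$ itself. First I would observe that $f$ tautologically lies in $(H^2_d)_{Z(f^*)}$: indeed, $Z(f^*) \setminus Z(f^*) = \emptyset$ has outer capacity zero, so $f^* = 0$ quasi-everywhere on $Z(f^*)$ by definition.

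Next, since $(H^2_d)_{Z(f^*)}$ is a closed multiplier invariant subspace containing $f$ (by Theorem \ref{thm:inv_sub_quasi_everywhere}), it contains the cyclic subspace $[f]$. Cyclicity of $f$ means $[f] = H^2_d$, and hence $(H^2_d)_{Z(f^*)} = H^2_d$. In particular, the constant function $\mathbf{1}$ belongs to $(H^2_d)_{Z(f^*)}$.

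To conclude, note that $\mathbf{1}^* \equiv 1$, so $Z(\mathbf{1}^*) = \emptyset$. The condition that $\mathbf{1}^* = 0$ quasi-everywhere on $Z(f^*)$ therefore reads
\[
\ccap^*\bigl(Z(f^*) \setminus Z(\mathbf{1}^*)\bigr) = \ccap^*(Z(f^*)) = 0,
\]
which is exactly the desired conclusion. No step is a genuine obstacle here: all the substantive work was done in Theorem \ref{thm:inv_sub_quasi_everywhere}, which in turn rested on the capacitary weak-type inequality (Theorem \ref{thm:weak_type}). The corollary is just the specialization $E = Z(f^*)$ combined with the trivial observation about the constant function.
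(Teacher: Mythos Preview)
Your proof is correct and follows essentially the same approach as the paper: both set $E = Z(f^*)$, observe that $f \in (H^2_d)_E$, invoke Theorem \ref{thm:inv_sub_quasi_everywhere} to get $[f] \subset (H^2_d)_E$, and then use that $1 \in (H^2_d)_E$ forces $\ccap^*(E) = 0$. The paper phrases the last step as a contrapositive (if $\ccap^*(E) > 0$ then $1 \notin (H^2_d)_E$), but this is just a cosmetic difference.
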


\begin{proof}
  Let $E = Z(f^*)$. Then $f \in (H^2_d)_E$ and hence $[f] \subset (H^2_d)_E$
  by Theorem \ref{thm:inv_sub_quasi_everywhere}.
  If $\ccap^*(E) > 0$, then $1 \notin (H^2_d)_E$, so $f$ is not cyclic.
\end{proof}

\begin{remark}
Corollary \ref{cor:cyclic_outer_cap} is related to recent work of Aleman, Perfekt, Richter, Sundberg and Sunkes \cite{APR+24}. They show
that if $f \in H^2_d$ extends to be continuous on $\overline{\mathbb B_d}$
and if $Z(f^*)$ contains the diffeomorphic image of a cube of real dimension
at least $3$, then $f$ is not cyclic \cite[Theorem 4.8]{APR+24}. The proof of their
result shows that in this case, $Z(f^*)$ has positive capacity.
In fact, $Z(f^*)$ even has positive capacity with respect to the energy
defined by the modulus of the reproducing kernel.

There are subsets of $\partial \mathbb B_d$ that have positive capacity according to our definition but capacity zero according to the definition with the modulus of the kernel.
Correspondingly, Corollary \ref{cor:cyclic_outer_cap} gives more information
than the analogous result for the modulus of the kernel.
A trivial example for $d=2$ is given by $f = z_2$.
Then $Z(f^*) = \{(z,0): z \in \partial \mathbb D\}$ is such a set; see Example \ref{exa:real_part_modulus}. Thus, Corollary \ref{cor:cyclic_outer_cap} applies to show that $f$ is not cyclic, whereas the analogous result for the modulus of the kernel does not apply.
Of course, it is also clear from the fact that $f$ has zeros inside ${\mathbb B_d}$ that $f$ is not cyclic.

In general if $p$ is a polynomial  that does not vanish inside $\mathbb{B}_d $, then we can write $Z(p)$ as a finite union of smooth manifolds \cite[Proposition 2.9.10]{Bochnak1998}. By definition the set $Z(p)$ is a (Z)-set and hence a (TN)-set in the sense of Rudin \cite[Definition 10.1.1]{Rudin08}. Then the smooth manifolds in the decomposition of 
 $Z(p)$ must be complex
tangential by \cite[Theorem 11.2.6]{Rudin08}.  But for
a set $E$ contained in a smooth complex tangential manifold, combining Theorems \ref{thm:boundary_limits_intro} and \cite[Theorem 2.8]{AC89} we find that $\ccap_{h^2_d}(E) = 0 $ if and only if the capacity generated by the modulus of the kernel of the Drury--Arveson space vanishes. 
We do not know if there exists a function $f\in H^2_d$ without zeros in $\mathbb B_d$ such that $\ccap_{h^2_d}(Z(f^*))>0$ but the capacity with the modulus of the kernel is zero. 
\end{remark}

Corollary \ref{cor:cyclic_outer_cap} also allows us to prove the existence of Kor\'anyi limits
for a class of functions that is larger than $H^2_d$, thus extending the scope of Theorem \ref{thm:boundary_limits_intro}. The \emph{Drury--Arveson--Smirnov class} is defined to be
\[
    N^+(H^2_d) = \Big\{ \frac{\varphi}{\psi}: \varphi, \psi \in \Mult(H^2_d), \psi \text{ cyclic} \Big\}.
\]
Background information on $N^+(H^2_d)$ can be found in \cite{AHM+17a,AHM+ar}.
In particular, $H^2_d \subset N^+(H^2_d)$.
\begin{corollary}
    For every $f \in N^+(H^2_d)$, there exists a Borel set $E \subset \partial \mathbb B_d$ with $\ccap_{h^2_d}^*(E) = 0$ such that $f$ has a Kor\'anyi limit et every point in $\partial \mathbb B_d \setminus E$.
\end{corollary}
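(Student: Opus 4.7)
The plan is to unpack the definition $f = \varphi/\psi$ with $\varphi, \psi \in \Mult(H^2_d)$ and $\psi$ cyclic, and then produce the exceptional set $E$ as a union of three sets, each of which is already known to be of outer $h^2_d$-capacity zero for reasons already established in the paper.

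First I would note that $\Mult(H^2_d) \subset H^2_d$ (since $1 \in H^2_d$ and multipliers act on it), so both $\varphi$ and $\psi$ belong to $H^2_d$. Applying Theorem \ref{thm:boundary_limits_intro} to each yields Borel sets $E_1, E_2 \subset \partial \mathbb{B}_d$, both of outer $h^2_d$-capacity zero, such that $\varphi$ has a Kor\'anyi limit $\varphi^*(\zeta)$ at every $\zeta \in \partial \mathbb{B}_d \setminus E_1$ and $\psi$ has a Kor\'anyi limit $\psi^*(\zeta)$ at every $\zeta \in \partial \mathbb{B}_d \setminus E_2$.

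Next, since $\psi$ is cyclic, Corollary \ref{cor:cyclic_outer_cap} gives $\ccap^*_{h^2_d}(Z(\psi^*)) = 0$. Set
\[
    E = E_1 \cup E_2 \cup Z(\psi^*).
\]
By countable subadditivity of outer capacity (property $(P2)$ of Proposition \ref{prop:abstract_capacity}), $E$ is a Borel set with $\ccap^*_{h^2_d}(E) = 0$. For any $\zeta \in \partial \mathbb{B}_d \setminus E$, both $\varphi$ and $\psi$ admit Kor\'anyi limits at $\zeta$, and $\psi^*(\zeta) \neq 0$; hence the quotient $f = \varphi/\psi$ has Kor\'anyi limit $\varphi^*(\zeta)/\psi^*(\zeta)$ at $\zeta$.

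There is essentially no obstacle here; the only point requiring a moment of care is the consistency of the boundary function $\psi^*$ appearing in Corollary \ref{cor:cyclic_outer_cap} with the pointwise Kor\'anyi limit produced by Theorem \ref{thm:boundary_limits_intro}, but this is precisely the content of the remark in Section \ref{sec:cyclic} that the two boundary functions agree outside a set of outer capacity zero, which can harmlessly be absorbed into $E$.
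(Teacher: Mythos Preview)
Your proof is correct and follows exactly the same approach as the paper, which simply states that the result is immediate from Theorem \ref{thm:h^2_d_boundary} and Corollary \ref{cor:cyclic_outer_cap}. You have just spelled out the details of writing $f=\varphi/\psi$, taking the union of the three capacity-zero exceptional sets, and dividing the limits off that union; your remark about the consistency of the two notions of $\psi^*$ is also apt and handled exactly as the paper indicates.
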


\begin{proof}
    This is immediate from Theorem \ref{thm:h^2_d_boundary} and Corollary \ref{cor:cyclic_outer_cap}.
\end{proof}

There are versions of Theorem \ref{thm:inv_sub_quasi_everywhere} and Corollary \ref{cor:cyclic_outer_cap} for more general spaces $\mathcal{H}$ on the ball where we do not have access
to the weak-type inequality. But we need to settle for weaker statements, where outer capacity is replaced by inner capacity.
Thus, if $E \subset \partial \mathbb{B}_d$ and $f \in \mathcal{H}$, we say that $f^* = 0$
\emph{nearly everywhere} on $E$ if $\ccap_{\mathcal{H}}(E \setminus Z(f^*)) =0$.
(Property (P5) of Proposition \ref{prop:abstract_capacity} shows that this notion
is independent of the particular choice of boundary function $f^*$).
We let
\begin{equation*}
\mathcal{H}^E = \{ f \in \mathcal{H}: f^*= 0 \text{ nearly everywhere on } E \}.
\end{equation*}
It turns out that in the case of the Drury--Arveson space, if $E$ is Borel, then $(H^2_d)^E = (H^2_d)_E$
by the capacitability of Borel sets; this will be proved in Theorem \ref{thm:Choquet}.

For general spaces $\mathcal{H}$, it is no longer obvious that $\mathcal{H}^E$ is a subspace, since inner
capacity might fail to be subadditive in principle
(see however Lemma \ref{lem:capacity_zero}).
Nonetheless, it turns out to be a multiplier invariant subspace.

\begin{theorem}
  \label{thm:nearly_everywhere_subspace}
  Let $\mathcal{H}$ be a regular unitarily invariant space of holomorphic functions
  on $\mathbb{B}_d$ whose kernel has non-negative real part and let $E \subset \partial \mathbb{B}_d$.
  Then  $\mathcal{H}^E$ is a closed multiplier invariant subspace of $\mathcal{H}$.
\end{theorem}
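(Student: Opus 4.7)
The plan is to verify three properties of $\mathcal{H}^E$ separately: that it is a linear subspace, that it is closed in $\mathcal{H}$, and that it is invariant under multiplication by any $\varphi \in \Mult(\mathcal{H})$. The main obstacle is that inner capacity is not known to be countably subadditive in this generality, so one cannot naively take unions of null sets. My strategy is to reduce every claim to compact subsets of $\partial \mathbb{B}_d$ and then apply Lemma \ref{lem:capacity_zero}, which for Borel sets equates inner capacity zero with having $\mu$-measure zero for every $\mu \in P(\overline{\mathbb{B}_d})$ with $\mathcal{E}(\mu) < \infty$.

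For the subspace property, I fix Borel representatives of the boundary functions $f^*, g^*, (f+g)^*$. Since such representatives are unique up to a set of outer capacity zero (Theorem \ref{thm:convergence_in_capacity}), there is a Borel set $N$ with $\ccap^*(N)=0$ on whose complement $(f+g)^* = f^*+g^*$. Hence any compact $F \subset E \setminus Z((f+g)^*)$ is covered by the three Borel sets $F \setminus Z(f^*)$, $F \setminus Z(g^*)$, $F \cap N$, and each of these has the property that all of its compact subsets have capacity zero (being compact subsets of $E \setminus Z(f^*)$, $E \setminus Z(g^*)$, or $N$). By Lemma \ref{lem:capacity_zero} each of them has $\mu$-measure zero for every finite-energy probability measure $\mu$, so $\mu(F) = 0$, and Lemma \ref{lem:capacity_zero} again gives $\ccap(F) = 0$, proving $f+g \in \mathcal{H}^E$; scalar multiplication is immediate.

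For closedness, suppose $f_n \to f$ in $\mathcal{H}$ with $f_n \in \mathcal{H}^E$, and fix a compact $F \subset E \setminus Z(f^*)$ together with a finite-energy probability $\mu$. Proposition \ref{prop:boundary_L^1} gives $f_n^* \to f^*$ in $L^1(\mu)$, so a subsequence converges pointwise $\mu$-a.e. On $F$ one has $f^* \neq 0$, so $f_{n_k}^*(\zeta) \neq 0$ eventually for $\mu$-a.e.\ $\zeta \in F$; hence $\mu$-almost every point of $F$ belongs to some Borel set $F \cap \{f_{n_k}^* \neq 0\} \subset E \setminus Z(f_{n_k}^*)$. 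By the same compact-subset reasoning combined with Lemma \ref{lem:capacity_zero}, each of these has $\mu$-measure zero, so $\mu(F) = 0$ and thus $\ccap(F) = 0$.

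For multiplier invariance, I first treat a polynomial $p$. The uniform convergence $p_r \to p$ on $\overline{\mathbb{B}_d}$ combined with Theorem \ref{thm:convergence_in_capacity} applied to $f$ and $pf$ (extracting a common subsequence $r_n \to 1$ along which $f_{r_n} \to f^*$ and $(pf)_{r_n} \to (pf)^*$ pointwise off a set of outer capacity zero) yields $(pf)^* = p \cdot f^*$ off such a set, so $Z(f^*) \subset Z((pf)^*)$ modulo outer capacity zero and the compact-subset argument of the subspace step gives $pf \in \mathcal{H}^E$. For a general $\varphi \in \Mult(\mathcal{H})$, the SOT-density of polynomials in $\Mult(\mathcal{H})$ (used in the discussion following Section \ref{sec:cyclic}) furnishes polynomials $p_n$ with $p_n f \to \varphi f$ in $\mathcal{H}$; since $p_n f \in \mathcal{H}^E$ and $\mathcal{H}^E$ is closed by the previous step, $\varphi f \in \mathcal{H}^E$.
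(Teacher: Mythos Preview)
Your proof is correct, but it takes a genuinely different route from the paper. The paper does not verify the three properties separately; instead it establishes a single functional-analytic characterization
\[
  \mathcal{H}^E = \bigcap \{ \ker(\rho_\mu): \mu \in P(\partial \mathbb{B}_d),\ \supp(\mu) \subset E,\ \mathcal{E}(\mu) < \infty \},
\]
from which closedness, the subspace property, and (after the easy observation on polynomial invariance) multiplier invariance all follow at once, since the right-hand side is an intersection of closed linear subspaces. Your argument and the paper's both draw on the same toolkit (Lemma~\ref{lem:capacity_zero} and Proposition~\ref{prop:boundary_L^1}), but you use Proposition~\ref{prop:boundary_L^1} to get $L^1(\mu)$-convergence of $f_n^* \to f^*$ and then a pointwise-a.e.\ subsequence, whereas the paper uses it only to identify $\rho_\mu(f)$ with $\int f^*\,d\mu$. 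Your direct approach is perfectly sound but somewhat laborious in comparison; the paper's characterization buys all three conclusions simultaneously and also records a useful description of $\mathcal{H}^E$ that is of independent interest.
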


\begin{proof}
  It is clear that $\mathcal{H}^E$ is invariant under multiplication by polynomials,
  so it suffices to show that $\mathcal{H}^E$ is a closed subspace of $\mathcal{H}$.
  To this end, we will show that
  \begin{equation}
    \label{eqn:nearly_everywhwere}
    \mathcal{H}^E = \bigcap \{ \ker(\rho_\mu): \mu \in P(\partial \mathbb{B}_d) \text{ with } \supp(\mu) \subset E \text{ and } \mathcal{E}(\mu) < \infty \}.
  \end{equation}
  Proposition \ref{prop:energy_fa} shows that each kernel on the right is closed,
  so this will establish the result.

  To prove \eqref{eqn:nearly_everywhwere},
  let $f \in \mathcal{H}^E$ and let $\mu \in P(\partial \mathbb{B}_d)$ be a measure with $\mathcal{E}(\mu) < \infty$
  whose support $K$ is contained in $E$. Then $K \setminus Z(f^*)$ is a Borel set
  whose inner capacity is zero, hence $\mu(K \setminus Z(f^*)) = 0$
  by Lemma \ref{lem:capacity_zero}.
  It follows that $f^* = 0$ almost everywhere with respect to $\mu$, thus
  Proposition \ref{prop:boundary_L^1} implies that
  \begin{equation*}
    \rho_\mu(f) = \int_{\partial \mathbb{B}_d} f^* \, d \mu = 0.
  \end{equation*}

  Conversely, suppose that $f \in \mathcal{H} \setminus \mathcal{H}^E$.
  Unravelling the definitions, we find a compact set
  $K \subset E \setminus Z(f^*)$ such that $\ccap(K) > 0$ and then
  $\nu \in P(K)$ with $\mathcal{E}(\nu) < \infty$.
  Since $f^*(\zeta) \neq 0$ for all $\zeta \in K$ and $\nu(K) = 1$,
  there exists a Borel subset $F \subset K$ such
  that $\int_{F} f^* \,d \nu \neq 0$.
  Then $d \mu = \nu(F)^{-1} \chi_F d \nu$ has finite energy (see Remark \ref{rem:energy_positive_kernel}).
  By Proposition \ref{prop:boundary_L^1}, we have
  \begin{equation*}
    \rho_\mu(f) = \int_{\partial \mathbb{B}_d} f^* \,d \mu = \nu(F)^{-1} \int_F f^* d \nu \neq 0,
  \end{equation*}
  showing that $f$ does not belong to the right-hand side of \eqref{eqn:nearly_everywhwere}.
\end{proof}

The following corollary
can be deduced from Theorem \ref{thm:nearly_everywhere_subspace} just as Corollary \ref{cor:cyclic_outer_cap}
was deduced from \ref{thm:inv_sub_quasi_everywhere}.
In fact, it can be proved directly using arguments along the lines of the discussion
at the beginning of the section.

\begin{corollary}
  \label{cor:cyclic_inner_cap}
  Let $\mathcal{H}$ be a regular unitarily invariant space of holomorphic functions
  on $\mathbb{B}_d$ whose kernel has non-negative real part.
  If $f \in \mathcal{H}$ is cyclic, then
  \begin{equation*}
    \ccap_{\mathcal{H}}( Z(f^*) ) = 0.
  \end{equation*}
\end{corollary}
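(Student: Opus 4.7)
The plan is to imitate the proof of Corollary \ref{cor:cyclic_outer_cap}, with Theorem \ref{thm:nearly_everywhere_subspace} replacing Theorem \ref{thm:inv_sub_quasi_everywhere}. Set $E = Z(f^*)$. Then $E \setminus Z(f^*) = \emptyset$, so trivially $\ccap_{\mathcal H}(E \setminus Z(f^*)) = 0$ and $f \in \mathcal H^E$. By Theorem \ref{thm:nearly_everywhere_subspace}, $\mathcal H^E$ is a closed multiplier invariant subspace, and regularity of $\mathcal H$ makes polynomials dense in $\Mult(\mathcal H)$ in the SOT, so $[f] \subset \mathcal H^E$. Now suppose for contradiction that $\ccap_{\mathcal H}(E) > 0$. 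The constant function $1$ has $1^* \equiv 1$, hence $Z(1^*) = \emptyset$ and $E \setminus Z(1^*) = E$ has positive inner capacity, so $1 \notin \mathcal H^E$. This rules out $1 \in [f]$ and contradicts cyclicity.

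As indicated in the statement, the same conclusion can be reached directly via the energy-based duality. If $\ccap_{\mathcal H}(Z(f^*)) > 0$, pick a compact $K \subset Z(f^*)$ with $\ccap_{\mathcal H}(K) > 0$ and then $\mu \in P(K)$ with $\mathcal E(\mu) < \infty$. By Proposition \ref{prop:energy_fa}, $\rho_\mu$ is a bounded linear functional on $\mathcal H$, and by Proposition \ref{prop:boundary_L^1}, $\rho_\mu(g) = \int g^* \, d\mu$ for every $g \in \mathcal H$. Since $f^* = 0$ on $K \supseteq \supp \mu$, we get $\rho_\mu(pf) = \int (pf)^* \, d\mu = \int p \cdot f^* \, d\mu = 0$ for every polynomial $p$. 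Hence $\rho_\mu$ vanishes on the dense subset $\{pf : p \text{ polynomial}\}$ of $[f]$, so $\rho_\mu|_{[f]} = 0$; but $\rho_\mu(1) = 1$, and therefore $1 \notin [f]$.

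The only subtle point in the direct route is justifying $(pf)^* = p \cdot f^*$ where it matters. This follows because $p$ is continuous on $\overline{\mathbb B_d}$, so $(pf)_r = p_r f_r$ with $p_r \to p$ uniformly; taking the pointwise convergent subsequence $f_{r_n} \to f^*$ provided by Theorem \ref{thm:convergence_in_capacity} (valid off a set of outer capacity zero, which is $\mu$-null by Lemma \ref{lem:capacity_zero}), we obtain $(pf)_{r_n} \to p \cdot f^*$ $\mu$-almost everywhere, identifying $(pf)^*$ with $p \cdot f^*$ $\mu$-a.e. No serious obstacle arises: both arguments are short once Theorem \ref{thm:nearly_everywhere_subspace} and Propositions \ref{prop:energy_fa} and \ref{prop:boundary_L^1} are in hand, and I would likely present the first (shorter) version as the main proof and mention the direct approach parenthetically.
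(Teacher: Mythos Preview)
Your proposal is correct and in fact covers both routes the paper indicates: the paper remarks that the corollary ``can be deduced from Theorem \ref{thm:nearly_everywhere_subspace} just as Corollary \ref{cor:cyclic_outer_cap} was deduced from \ref{thm:inv_sub_quasi_everywhere}'' (your first argument), but then actually writes out the direct argument via $\rho_\mu$ (your second argument). Your treatment of the identification $(pf)^* = p f^*$ $\mu$-a.e.\ is more careful than the paper's, which simply writes $\rho_\mu(pf) = \int p f^* \, d\mu$ without further comment.
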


\begin{proof}
  If $\ccap_{\mathcal{H}}(Z(f^*)) > 0$, then
  by definition, there exists a compact set $K \subset Z(f^*)$
  and $\mu \in P(K)$ with $\mathcal{E}(\mu) < \infty$. Then $\ker(\rho_\mu)$
  is closed by Proposition \ref{prop:energy_fa}.
  For every polynomial $p$, Proposition \ref{prop:boundary_L^1} shows that
  \begin{equation*}
    \rho_\mu(p f) = \int_{\partial \mathbb{B}_d} p f^* \,d \mu = 0,
  \end{equation*}
  so $[f] \subset \ker(\rho_\mu)$; whence $f$ is not cyclic.
\end{proof}

\section{Dual formulation and Choquet capacity}\label{sec:dual_formulation}

We show that our capacity can be expressed by a dual minimization problem.
\begin{theorem}\label{thm:dual_cap}
Let $\mathcal H$ be a unitarily invariant space
of pluriharmonic functions with reproducing kernel $K$ with non-negative real part.
Let $F\subset \bnd$ be a compact set. Then, 
\[ \ccap(F) = \inf \{ \Vert f \Vert_{\mathcal{H}}^2 : \Re f  \geq 1\,\, \text{on}\,\, F, \,\, f\in \mathcal{H} \cap C(\overline{\mathbb{B}_d}) \}. \]
\end{theorem}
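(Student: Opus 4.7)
My plan is to prove the duality in the usual way for capacity/energy: show a ``weak duality'' inequality by testing admissible functions against probability measures, and then show sharpness by exhibiting explicit minimizers built from equilibrium potentials at radius $r < 1$ and passing to the limit. Write $\alpha(F)$ for the infimum on the right-hand side.

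For the inequality $\alpha(F) \ge \ccap(F)$, I would take any admissible $f \in \mathcal H \cap C(\overline{\mathbb B_d})$ with $\Re f \ge 1$ on $F$ and any $\mu \in P(F)$ with $\mathcal E(\mu) < \infty$. Since $\mu$ is a positive measure, $\overline{\mu} = \mu$, so Proposition \ref{prop:energy_fa}(ii) gives
\[
    \Big| \int_{\overline{\mathbb B_d}} f \, d\mu \Big| \le \mathcal E(\mu)^{1/2} \|f\|_{\mathcal H}.
\]
On the other hand $\mu(F) = 1$ and $\Re f \ge 1$ on $F$, so $\Re \int f \, d\mu \ge 1$, whence $\|f\|_{\mathcal H}^2 \ge 1/\mathcal E(\mu)$. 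Taking the supremum over $\mu$ yields $\|f\|_{\mathcal H}^2 \ge \ccap(F)$, and taking the infimum over $f$ gives the desired bound.

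For the reverse inequality $\alpha(F) \le \ccap(F)$, I would construct admissible competitors from the $r$-equilibrium measures. Fix $0 \le r < 1$, let $\mu_r$ be an $r$-equilibrium measure for $F$ (which exists by weak-$*$ compactness and $(A3)$), and set
\[
    f_r(z) = \ccap_r(F) \int_{\overline{\mathbb B_d}} K(rz, rw) \, d\mu_r(w).
\]
Since the kernel $(z,w) \mapsto K(rz, rw)$ is continuous on $\overline{\mathbb B_d} \times \overline{\mathbb B_d}$, the function $f_r$ lies in $\mathcal H \cap C(\overline{\mathbb B_d})$. Lemma \ref{prop:potential_bound_below}(a) gives $\Re f_r(z) \ge 1$ for every $z \in F$, so $f_r$ is admissible, and Lemma \ref{prop:potential_bound_below}(b) gives $\|f_r\|_{\mathcal H}^2 \le \ccap_r(F)^2 \cdot \ccap_r(F)^{-1} = \ccap_r(F)$. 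Therefore $\alpha(F) \le \ccap_r(F)$, and letting $r \to 1$ together with Lemma \ref{lem:c_r} yields $\alpha(F) \le \ccap(F)$.

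I do not anticipate a serious obstacle here: the key ingredients (the functional-analytic formula for energy, the $r$-approximation of capacity, the continuity of $f_r$ up to the boundary, and the potential bounds of Lemma \ref{prop:potential_bound_below}) are all available from earlier in the paper. The only minor subtlety is checking $f_r \in \mathcal H \cap C(\overline{\mathbb B_d})$, which reduces to observing that for $r < 1$ the series defining $K(r\,\cdot\,, r\,\cdot\,)$ converges uniformly on $\overline{\mathbb B_d} \times \overline{\mathbb B_d}$, so the integral against $\mu_r$ defines a function in $\mathcal H$ that extends continuously to the closed ball. If $\ccap(F) = 0$ the same argument still applies since $\ccap_r(F) > 0$ for nonempty $F$, and the bound $\|f_r\|_{\mathcal H}^2 \le \ccap_r(F) \to 0$ forces $\alpha(F) = 0$.
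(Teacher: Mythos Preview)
Your proposal is correct and follows essentially the same route as the paper's proof: weak duality via Proposition~\ref{prop:energy_fa}, and sharpness via the normalized $r$-equilibrium potential together with Lemma~\ref{prop:potential_bound_below} and Lemma~\ref{lem:c_r}. The only cosmetic difference is that the paper fixes $\varepsilon>0$ and chooses $r$ with $\ccap_r(F)<\ccap(F)+\varepsilon$, whereas you bound $\alpha(F)\le\ccap_r(F)$ for every $r$ and then let $r\to 1$; these are equivalent.
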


\begin{proof}
If $\mu \in P(F)$ and $f \in \mathcal H \cap C(\overline{\mathbb B_d})$ with $\Re f \ge 1$ on $F$, then
Proposition \ref{prop:energy_fa} shows that
\begin{equation} 1 \leq \int_{\partial \mathbb{B}_d } \Re f \, d\mu \leq  \Big| \int_{\partial \mathbb{B}_d} f \, d\mu \Big| \leq  \Vert f \Vert_\mathcal{H} \mathcal{E}(\mu)^\frac12,  \end{equation}
which implies that $\ccap(F)$ is bounded above by the infimum in the statement.

Conversely, given $\varepsilon > 0$, we apply Lemma \ref{lem:c_r} to find $r \in (0,1)$ with $\ccap_r(F) < \ccap(F) + \varepsilon$. Let $\mu \in P(F)$ be an $r$-equilibrium measure for $F$ and consider the normalized potential
\[
  f(z) = \ccap_r(F) \int_{\partial \mathbb B_d}  K(rz, r w) \, d \mu(w).
\]
Then $f$ is continuous on $\overline{\mathbb B_d}$.
Lemma \ref{prop:potential_bound_below} shows that
$\Re f \ge 1$ on $F$ and $\|f\|_{\mathcal H}^2 \le \ccap_r(F)$;
hence the infimum is bounded above by $\ccap_r(F) < \ccap(F) + \epsilon$.
This finishes the proof.
\end{proof}

In the case of the Drury--Arveson space, Theorem \ref{thm:dual_cap} can be extended
to arbitrary subsets of $\partial \mathbb{B}_d$. Given $f \in \PHDA$,
Theorem \ref{thm:boundary_limits_intro} shows that there exists a Borel
set $E \subset \partial \mathbb{B}_d$ with $\ccap^*_{\PHDA}(E) = 0$
such that for all $\zeta \in \partial \mathbb{B}_d \setminus E$, the limit $f^*(\zeta) := \lim_{r \nearrow 1}f(r \zeta)$ exists. If $F \subset \partial \mathbb{B}_d$, we say that $\Re f^* \ge 1$ \emph{quasi-everywhere
on $F$} if
\begin{equation*}
  \ccap_{\PHDA}^*( \{ \zeta \in F: \Re f^* < 1 \} ) = 0.
\end{equation*}

\begin{lemma}
  \label{lem:qe_large_closed}
  Let $E \subset \partial \mathbb{B}_d$ be an arbirary subset and define
  \begin{equation*}
    C^E = \{ f \in \PHDA: \Re f^* \ge 1 \text{ quasi-everywhere on } E \}.
  \end{equation*}
  Then $C^E$ is a convex and weakly closed subset of $\PHDA$.
\end{lemma}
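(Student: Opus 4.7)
The plan is to handle convexity and weak closedness separately, the latter being the main content.

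For convexity, let $f,g \in C^E$ and $t \in [0,1]$. By Theorem \ref{thm:boundary_limits_intro} (applied to $f$, $g$, and $tf+(1-t)g$, noting that each lives in $\PHDA$ via the inclusion $H^2_d \subset \PHDA$, or directly from Theorem \ref{thm:h^2_d_boundary}), the Kor\'anyi limits $f^*$, $g^*$, and $(tf+(1-t)g)^*$ all exist outside respective Borel sets of outer $\PHDA$-capacity zero, and the limit functional is linear where defined. Since countable unions of sets of outer capacity zero again have outer capacity zero (property $(P3)$ of Proposition \ref{prop:abstract_capacity}), we get $(tf+(1-t)g)^* = tf^*+(1-t)g^*$ quasi-everywhere, hence $\Re(tf+(1-t)g)^* \ge t+(1-t)=1$ quasi-everywhere on $E$.

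For weak closedness, I will use the fact that a convex subset of a Hilbert space is weakly closed if and only if it is norm closed, so it is enough to prove norm closedness. Let $f_n \to f$ in $\PHDA$ with $f_n \in C^E$. By passing to a subsequence I may assume $\|f-f_n\|^2 \le 4^{-n}$. Applying the weak-type inequality (Theorem \ref{thm:weak_type}) with $t = 2^{-n/2}$ gives
\[
    \ccap^*_{h^2_d}\bigl(\{M_\alpha(f-f_n) > 2^{-n/2}\}\bigr) \le C_\alpha^2\, 2^{-n},
\]
so a Borel--Cantelli argument using countable subadditivity of outer capacity shows that $M_\alpha(f-f_n)(\zeta) \to 0$ for all $\zeta$ outside a Borel set $N$ of outer capacity zero.

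On the complement of the outer-capacity-zero set consisting of $N$ together with the exceptional sets of $f^*$ and all $f_n^*$ and all the ``$\Re f_n^* \ge 1$'' sets on $E$, both $f^*(\zeta)$ and $f_n^*(\zeta)$ exist as Kor\'anyi limits, so taking the limit in $z \to \zeta$ through $D_\alpha(\zeta)$ yields the pointwise bound $|f^*(\zeta)-f_n^*(\zeta)| \le M_\alpha(f-f_n)(\zeta)$. Hence $f_n^* \to f^*$ pointwise quasi-everywhere on $E$, and since $\Re f_n^* \ge 1$ quasi-everywhere on $E$ for each $n$, we conclude $\Re f^* \ge 1$ quasi-everywhere on $E$, i.e.\ $f \in C^E$. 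The delicate point is the simultaneous handling of the null sets arising from each boundary function and from the weak-type estimate; this is what forces passing to a subsequence and justifies the invocation of countable subadditivity of $\ccap^*$ at every step.
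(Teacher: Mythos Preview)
Your proof is correct and follows the same overall architecture as the paper: convexity from subadditivity of outer capacity on null sets, then norm closedness via the weak-type inequality (Theorem \ref{thm:weak_type}), and finally weak closedness from convexity plus norm closedness via Hahn--Banach. Two small remarks: first, the countable subadditivity you invoke is property $(P2)$ of Proposition \ref{prop:abstract_capacity}, not $(P3)$; second, your closedness argument goes through a subsequence and a Borel--Cantelli step to get $f_n^* \to f^*$ pointwise quasi-everywhere, whereas the paper argues slightly more directly by fixing $t \in (0,1)$, bounding $\ccap^*_{\PHDA}(E \cap \{\Re f^* < 1-t\}) \lesssim \|f_n - f\|^2/t^2$ via the weak-type inequality, sending $n \to \infty$, and then taking a countable union over $t$ --- no subsequence needed. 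Both routes are equally valid.
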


\begin{proof}
  Convexity of $C^E$ follows from the fact that the union of two sets of outer capacity zero has outer capacity zero;
  see Proposition \ref{prop:abstract_capacity}.
  We will show that $C^E$ is norm closed.
  Weak closedness of $C^E$ then follows from convexity and a standard consequence of the Hahn--Banach theorem.

  To establishes that $C^E$ is norm closed, we will use the weak-type inequality and argue similarly
  as in the proof of Theorem \ref{thm:inv_sub_quasi_everywhere}.
  Thus, let $(f_n)$ be a sequence in $C^E$ converging to $f \in \PHDA$.
  Let $t \in (0,1)$ and $\alpha > 1$. Then for each $n \in \mathbb{N}$,
  the weak-type inequality (Theorem \ref{thm:weak_type}) shows that
  \begin{align*}
    \ccap^*_{\PHDA}( E \cap \{\Re f^* < 1- t \})
    &\le
    \ccap^*_{\PHDA}( E \cap \{\Re (f_n^* - f^*) > t \}) \\
    &\le \ccap^*_{\PHDA} ( \{ M_\alpha[f_n - f] > t \}) \\
    &\lesssim \frac{\|f_n - f\|^2_{\PHDA}}{t^2}.
  \end{align*}
  Taking the limit $n \to 1$ and using that countable unions of sets of outer capacity
  zero have outer capacity zero (Proposition \ref{prop:abstract_capacity}), it follows
  that $\ccap^*_{\PHDA} ( E \cap \{\Re f^* < 1 \}) = 0$; whence $f \in C^E$.
\end{proof}

We now obtain the announced extension of Theorem \ref{thm:dual_cap} in the case of the Drury--Arveson space.

\begin{theorem}
  \label{thm:dual_DA}
  Let $E \subset \partial \mathbb{B}_d$ be an arbitrary subset. Then
  \begin{equation*}
    \ccap_{\PHDA}^*(E) = \inf \{ \|f\|_{\PHDA}^2: \Re f^* \ge 1 \text{ quasi-everywhere on } E \}.
  \end{equation*}
\end{theorem}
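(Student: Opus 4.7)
Write $I(E)$ for the infimum on the right-hand side. The plan is to prove both inequalities separately: for $\ccap_{\PHDA}^*(E) \leq I(E)$, use the continuous dilations $f_r$ of an admissible $f$ to locate $E$ (up to a set of small outer capacity) inside an open set to which Theorem \ref{thm:dual_cap} applies; for $I(E) \leq \ccap_{\PHDA}^*(E)$, exhaust an open superset of $E$ by compact sets, extract a limit function via weak compactness and Mazur, and read off its quasi-everywhere boundary behaviour from the weak-type inequality (Theorem \ref{thm:weak_type}).

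For the bound $\ccap_{\PHDA}^*(E) \leq I(E)$, fix an admissible $f \in \PHDA$, $\alpha > 1$ and $\epsilon \in (0,1)$. For $r \in (0,1)$, the function $f_r$ is continuous on $\overline{\mathbb{B}_d}$, so $U_{r,\epsilon} := \{\zeta \in \partial \mathbb{B}_d : \Re f_r(\zeta) > 1-\epsilon\}$ is open, and $(1-\epsilon)^{-1}f_r$ is a competitor in Theorem \ref{thm:dual_cap} against every compact $K \subset U_{r,\epsilon}$; this yields $\ccap(U_{r,\epsilon}) \leq \|f\|^2/(1-\epsilon)^2$. Outside a set of outer capacity zero, every $\zeta \in E$ has $\Re f^*(\zeta) \geq 1$ and a Kor\'anyi limit by Theorem \ref{thm:boundary_limits_intro}, so if additionally $\zeta \notin U_{r,\epsilon}$ then
\[
M_\alpha(f - f_r)(\zeta) \geq |(f-f_r)^*(\zeta)| \geq \Re f^*(\zeta) - \Re f_r(\zeta) \geq \epsilon.
\]
Consequently $E$ is contained in $U_{r,\epsilon} \cup \{M_\alpha(f - f_r) \geq \epsilon\}$ up to a set of outer capacity zero, and Theorem \ref{thm:weak_type} controls the second piece by $O(\|f - f_r\|^2/\epsilon^2)$, which vanishes as $r \nearrow 1$ by Lemma \ref{lem:pluriharmonic_RKHS_basic}(c). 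Subadditivity of outer capacity (Proposition \ref{prop:abstract_capacity}) gives $\ccap_{\PHDA}^*(E) \leq \|f\|^2/(1-\epsilon)^2$, and sending $\epsilon \to 0$ and then infimising over $f$ finishes this direction.

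For the reverse inequality, pick an open $U \supset E$ and the nested compact sets $K_n := \{\zeta \in U : \mathrm{dist}(\zeta, \partial \mathbb{B}_d \setminus U) \geq 1/n\}$; any compact $L \subset U$ has positive distance to $U^c$ and is therefore eventually contained in $K_n$, so $\bigcup_n K_n = U$ and $\ccap(K_n) \nearrow \ccap(U)$. Theorem \ref{thm:dual_cap} produces $f_n \in \PHDA \cap C(\overline{\mathbb{B}_d})$ with $\Re f_n \geq 1$ on $K_n$ and $\|f_n\|^2 \leq \ccap(K_n) + 1/n$. The sequence $(f_n)$ is norm-bounded, so Mazur's lemma yields finite convex combinations $g_n = \sum_{k \geq n} \lambda_{n,k} f_k$ converging in norm to some $f \in \PHDA$ with $\|f\|^2 \leq \ccap(U)$. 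Each $g_n \in C(\overline{\mathbb{B}_d})$, and since $K_n \subset K_k$ for $k \geq n$, $\Re g_n \geq 1$ on $K_n$. Pass to a subsequence with $\|g_{n_j} - f\|^2 \leq 4^{-j}$; the weak-type inequality together with countable subadditivity of outer capacity shows that the set where $M_\alpha(g_{n_j} - f) > 2^{-j/2}$ happens infinitely often has outer capacity zero, whence $g_{n_j}(\zeta) \to f^*(\zeta)$ for every $\zeta$ outside a set $N$ of outer capacity zero. For $\zeta \in K_m \setminus N$ one has $\Re g_{n_j}(\zeta) \geq 1$ for all $n_j \geq m$, hence $\Re f^*(\zeta) \geq 1$. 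Thus $f$ is admissible for $I(E)$ and $I(E) \leq \|f\|^2 \leq \ccap(U)$; taking the infimum over open $U \supset E$ gives $I(E) \leq \ccap_{\PHDA}^*(E)$.

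The main obstacle is the second direction: Theorem \ref{thm:dual_cap} only supplies continuous competitors against compact sets, whereas the target condition $\Re f^* \geq 1$ quasi-everywhere on $U$ concerns the (potentially non-continuous) boundary value on an open set. Bridging this gap requires combining the exhaustion $K_n \nearrow U$ with weak compactness in $\PHDA$, Mazur's lemma to promote weak to norm convergence of continuous competitors, and once more the weak-type inequality of Theorem \ref{thm:weak_type} to transfer the pointwise inequality $\Re g_n \geq 1$ on $K_n$ into a quasi-everywhere statement for $f^*$ on the full open set.
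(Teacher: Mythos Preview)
Your proof is correct and follows essentially the same strategy as the paper. The inequality $\ccap_{\PHDA}^*(E) \le I(E)$ is handled identically: dilate, split $E$ (up to a set of outer capacity zero) into the open set $\{\Re f_r > 1-\epsilon\}$ controlled by Theorem~\ref{thm:dual_cap} and a residual set controlled by the weak-type inequality, then send $r\to 1$ and $\epsilon\to 0$.

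For the reverse inequality the paper and you diverge slightly in organization, not in substance. The paper takes a weak cluster point of the competitors $f_n$ and invokes Lemma~\ref{lem:qe_large_closed} (weak closedness of $C^{K_m}$, itself proved via the weak-type inequality) to pass the condition $\Re f_n^* \ge 1$ on $K_m$ to the limit. You instead upgrade weak to norm convergence via Mazur's lemma and run a Borel--Cantelli argument directly with the weak-type inequality to get quasi-everywhere pointwise convergence $g_{n_j}(\zeta)\to f^*(\zeta)$. Both routes use the same ingredients (compact exhaustion, Theorem~\ref{thm:dual_cap}, Theorem~\ref{thm:weak_type}); yours simply inlines the content of Lemma~\ref{lem:qe_large_closed} rather than factoring it out.
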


\begin{proof}
  For ease of notation, we will write $\|\cdot\| = \|\cdot\|_{\PHDA}$
  and $\ccap^* = \ccap^*_{\PHDA}$.
  To establish the inequality ``$\le$'', let $f \in \PHDA$ satisfy
  $\Re f^* \ge 1$ quasi-everywhere on $E$.
  We will show that $\ccap^*(E) \le \|f\|^2$.
  By replacing $f$ with $\Re f$, we may assume that $f$ is real-valued;
  observe that $\|\Re f \| = \| \frac{f + \overline{f}}{2}\|\le \|f\|$.
  Let $\alpha > 1$ and let $r \in [0,1)$. Since
  \begin{equation*}
    M_\alpha[f - f_r] + f_r \ge f^*
  \end{equation*}
  at every point $\zeta \in \partial \mathbb{B}_d$ at which the radial limit $f^*(\zeta) = \lim_{r \nearrow 1} f(r \zeta)$ exists, subadditivity of outer capacity yields for every $t \in (0,1)$ the bound
  \begin{align*}
    \ccap^*(E) &\le \ccap^* ( \{ M_{\alpha}[f - f_r] + f_r \ge 1 \}) \\
    &\le \ccap^*( \{ M_{\alpha}[f - f_r] > t \})
    +
   \ccap ( \{ f_r \ge 1 - t \}),
  \end{align*}
  where in the last step, we also used that inner and outer capacity agree
  for compact sets; see Proposition \ref{prop:abstract_capacity}.
  We estimate the first summand with the help of the weak-type estimate
  (Theorem \ref{thm:weak_type}) and the second summand with the help of Theorem \ref{thm:dual_cap} to obtain
  that
  \begin{equation*}
    \ccap^*(E) \le C_{\alpha} \frac{\|f - f_r\|^2}{t^2} + \frac{ \|f_r\|^2}{(1 - t)^2}.
  \end{equation*}
  Taking first the limit $r \to 1$ and then the limit $t \to 0$,
  we obtain the desired bound $\ccap^*(E) \le \|f\|^2$.

  For the reverse inequality, we first observe that the case
  when $E$ is compact is immediate from Theorem \ref{thm:dual_cap}.
  Next, we assume that $E$ is an open set $U$.
  We write $U$
  as an increasing union of compact sets $U = \bigcup_{n=1}^\infty K_n$.
  For each $n \in \mathbb{N}$, Theorem \ref{thm:dual_cap} yields a continuous
  function $f_n \in \PHDA$ with $\Re f_n \ge 1$ on $K_n$ and $\|f_n\|^2 \le \ccap(K_n) + \frac{1}{n}
  \le \ccap^*(U) + \frac{1}{n}$.
  Let $f \in \PHDA$ be a weak cluster point of the sequence $(f_n)$.
  Then $\|f\|^2 \le \ccap^*(U)$.
  Since $\Re f_n \ge 1$ on $K_k$ for all $n \ge k$,
  Lemma \ref{lem:qe_large_closed} implies that $\Re f^* \ge 1$ quasi-everywhere on $K_k$
  for all $k \in \mathbb{N}$.
  Since countable unions of sets of outer capacity zero have outer capacity zero
  (Proposition \ref{prop:abstract_capacity}),
  we find that $\Re f^* \ge 1$ quasi-everywhere on $U$. This concludes
  the proof of the remaining inequality for open sets.
  The case of an arbitrary set $E$ follows immediately from the definition of outer capacity.
\end{proof}

With the help of the last result, we can show that the outer capacity $\ccap^*_{\PHDA}$
is a Choquet capacity.

\begin{theorem}
  \label{thm:Choquet}
  The set function $\ccap^* = \ccap^*_{\PHDA}$, defined on all subsets of $\partial \mathbb{B}_d$,
  is a \emph{Choquet capacity}, i.e.\ the following properties hold:
  \begin{enumerate}[label=\normalfont{(\alph*)}]
    \item $\ccap^*(\emptyset) = 0$;
    \item if $E \subset F$ , then $\ccap^*(E) \le \ccap^*(F)$;
    \item if $(E_n)$ is a decreasing sequence of compact sets, then
      \begin{equation*}
        \ccap^*\Big( \bigcap_{n=1}^\infty E_n \Big)
        = \lim_{n \to \infty} \ccap^* (E_n);
      \end{equation*}
    \item if $(E_n)$ is an increasing sequence of arbitrary subsets of $\partial \mathbb{B}_d$, then
      \begin{equation*}
        \ccap^* \Big( \bigcup_{n=1}^\infty E_n \Big) = \lim_{n \to \infty} \ccap^*(E_n).
      \end{equation*}
  \end{enumerate}
  Therefore, all Borel subsets of $\partial \mathbb{B}_d$ are capacitible.
\end{theorem}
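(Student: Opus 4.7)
\medskip

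The plan is to prove the four Choquet axioms (a)--(d) in order, deducing capacitability of Borel sets from Choquet's theorem at the end. Property (a) is immediate since the only compact subset of $\emptyset$ is $\emptyset$ itself, which supports no probability measure. Property (b) is immediate from the definition of $\ccap^*$ as an infimum over open supersets: if $E \subset F$, then every open $G \supset F$ also contains $E$.

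For (c), let $E = \bigcap_n E_n$. Monotonicity gives $\ccap^*(E) \le \lim_n \ccap^*(E_n)$. For the reverse inequality, I will use the fact that $\ccap^* = \ccap$ on compact sets (from the properties of abstract capacities in Proposition \ref{prop:abstract_capacity}). Fix an open $U \supset E$. Then $(E_n \setminus U)$ is a decreasing sequence of compact sets with empty intersection, so by the finite intersection property, $E_n \subset U$ eventually. Monotonicity then yields $\ccap^*(E_n) \le \ccap^*(U)$ for large $n$, and taking the infimum over all such open $U$ gives $\lim_n \ccap^*(E_n) \le \ccap^*(E)$.

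The main work is in (d), the countable increasing continuity, and this is where the dual formulation becomes essential. Let $E = \bigcup_n E_n$. Monotonicity gives $\lim_n \ccap^*(E_n) \le \ccap^*(E)$, so assume $L := \lim_n \ccap^*(E_n) < \infty$. Recall the convex set
\[ C^E = \{f \in \PHDA : \Re f^* \ge 1 \text{ quasi-everywhere on } E\}, \]
and similarly $C^{E_n}$. Since $E_n \subset E_{n+1}$, we have $C^{E_n} \supset C^{E_{n+1}}$, and by countable subadditivity of outer capacity (Proposition \ref{prop:abstract_capacity}), $\bigcap_n C^{E_n} = C^E$. By Lemma \ref{lem:qe_large_closed}, each $C^{E_n}$ is weakly closed and convex, so there is a unique minimal-norm element $f_n \in C^{E_n}$, and by Theorem \ref{thm:dual_DA}, $\|f_n\|^2 = \ccap^*(E_n)$. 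The sequence $(f_n)$ is bounded, so extracting a weakly convergent subsequence $f_{n_k} \rightharpoonup f$, weak closedness of each $C^{E_n}$ (together with $f_m \in C^{E_n}$ for $m \ge n$) yields $f \in C^{E_n}$ for every $n$, hence $f \in C^E$. Weak lower semicontinuity of the norm gives $\|f\|^2 \le L$, and invoking Theorem \ref{thm:dual_DA} once more, $\ccap^*(E) \le \|f\|^2 \le L$.

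The hard step is (d), and the obstacle is assembling a single admissible function for $E$ from admissible functions for the $E_n$. This is resolved precisely by the dual formulation in Theorem \ref{thm:dual_DA} combined with the weak closedness statement of Lemma \ref{lem:qe_large_closed}, both of which ultimately rest on the capacitary weak-type inequality of Theorem \ref{thm:weak_type}. With (a)--(d) established, $\ccap^*$ is a Choquet capacity, and Choquet's capacitability theorem yields that every Borel (in fact every $K_{\sigma\delta}$-analytic) subset of $\partial \mathbb{B}_d$ is capacitable, which is the final assertion.
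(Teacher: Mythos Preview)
Your proof is correct and follows essentially the same approach as the paper. For (a)--(c) the paper simply appeals to the abstract capacity properties in Proposition~\ref{prop:abstract_capacity}, whereas you give a direct compactness argument for (c) (the reference to $\ccap^*=\ccap$ on compact sets is not actually used in your argument, but this is harmless); for (d) both proofs use Theorem~\ref{thm:dual_DA} to produce near-minimizers $f_n\in C^{E_n}$, pass to a weak cluster point, invoke Lemma~\ref{lem:qe_large_closed} to land in $C^E$, and conclude via Theorem~\ref{thm:dual_DA} again---the only cosmetic difference being that you take exact minimal-norm elements (which is justified since $C^{E_n}$ is nonempty, closed, and convex) while the paper uses $\varepsilon$-approximate minimizers.
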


\begin{proof}
  All properties except for part (d) follow from general observations regarding abstract
  capacities; see Proposition \ref{prop:abstract_capacity}.
  As for (d), it suffices the prove the inequality
  \begin{equation}
    \label{eqn:Choquet_to_show}
    \ccap^* \Big( \bigcup_{n=1}^\infty E_n \Big) \le \lim_{n \to \infty} \ccap^*(E_n);
  \end{equation}
  note that the limit on the right exists and is finite.
  To this end, we use the dual description of outer capacity.
  Theorem \ref{thm:dual_DA} yields for each $n \in \mathbb{N}$
  a function $f_n \in \PHDA$ such that $\Re f_n \ge 1$ quasi-everywhere on $E_n$
  and $\|f_n\|^2 \le \ccap^*(E_n) + \frac{1}{n}$.
  Let $f \in \PHDA$ be a weak cluster point of the sequence $(f_n)$.
  Then $\|f\|^2 \le \lim_{n \to \infty} \ccap^*(E_n)$.
  Moreover, for each $k \in \mathbb{N}$, we have $\Re f_n \ge 1$ quasi-everywhere on $E_k$
  for all $n \ge k$, so Lemma \ref{lem:qe_large_closed} implies that $\Re f \ge 1$ quasi-everywhere
  on $E_k$. Since countable unions of sets of outer capacity zero have outer capacity zero,
  $\Re f \ge 1$ quasi-everywhere on $\bigcup_{k=1}^\infty E_k$.
  Another application of Theorem \ref{thm:dual_DA} then yields
  inequality \eqref{eqn:Choquet_to_show}, as desired.

  Capacitability of Borel sets finally follows from Choquet's theorem. See \cite[Chapter VI]{Choquet1955} or \cite[Theorem II:3]{Cegrell1988} for a more modern approach.
\end{proof}

\begin{remark}
  The proof of Theorem \ref{thm:Choquet} applies in any space in which a weak-type estimate
  for the radial maximal function holds.
  In particular, this includes the capacity $\ccap_{H^2_d}$ derived from the holomorphic Drury--Arveson kernel (equivalently, its real part) and the capacities associated with the spaces $\mathcal D_a$ for $a \in [0,1)$.
\end{remark}

\section{Totally null sets and Henkin measures}
\label{sec:TN_Henkin}

In this section, we connect our notion of capacity to the totally
null property and prove Theorem \ref{thm:cap_TN_intro}.
We again work in somewhat greater generality.
Let $\mathcal H$ be a regular unitarily invariant space of holomorphic
functions on $\mathbb B_d$ (see Section \ref{sec:cyclic}).

A complex Borel measure $\mu \in M(\partial \mathbb B_d)$ is said
to be \emph{$\Mult(\mathcal H)$-Henkin} if
whenever $(p_n)$ is a sequence of polynomials
with  $\sup_n \|p_n\|_{\Mult(\mathcal H)} < \infty$ and $\lim_{n \to \infty} p_n(z) = 0$ for all $z \in \mathbb B_d$, we have
\[
    \lim_{n \to \infty} \int_{\partial \mathbb B_d} p_n \, d \mu = 0.
\]
This is equivalent to demanding that the integration functional $p \mapsto \int_{\partial \mathbb B_d} p \, d \mu$, defined for (say) polynomials, extends to a weak-$*$ continuous
functional on $\Mult(\mathcal H)$
\footnote{In order to be consistent with Proposition \ref{prop:energy_fa},
it might be more natural to consider integration with respect to $\overline{\mu}$.
However, this leads to the same notion, since $\mu$ is Henkin if and only if $\overline{\mu}$ is Henkin;
see \cite[Lemma 3.3]{BHM18}.
}
; see \cite[Lemma 3.1]{BHM18}.
A Borel set $E \subset \partial \mathbb B_d$ is called \emph{$\Mult(\mathcal H)$-totally
null} if $|\mu|(E) = 0$ for every $\Mult(\mathcal H)$-Henkin measure $\mu$.

A regular unitarily invariant space of holomorphic functions on $\mathbb{B}_d$
is a \emph{complete Pick space} if $K$ does not vanish on $\mathbb{B}_d \times \mathbb{B}_d$
and the function $1 - \frac{1}{K}$ is positive semi-definite; see \cite{AM02} for background.
In particular, this implies that $\Re K \ge 0$.
The Drury--Arveson space in particular is a regular unitarily invariant complete Pick space of
holomorphic functions.

We say that a compact subset $E \subset \partial \mathbb{B}_d$ is an unboundedness for $\mathcal{H}$
if there exists $f \in \mathcal{H}$ with $\lim_{r \nearrow 1} |f(r \zeta)| = \infty$ 
for all $\zeta \in E$.
The equivalence of the totally null and the capacity zero condition now follows
fairly easily from what has already been done.
In particular, this will prove Theorem \ref{thm:cap_TN_intro}.

\begin{theorem}
  \label{thm:TN_capacity}
  Let $\mathcal{H}$ be a regular unitarily invariant complete Pick space of holomorphic functions on $\mathbb{B}_d$.
  For a compact set $E \subset \partial \mathbb{B}_d$, the following assertions are equivalent:
  \begin{enumerate}[label=\normalfont{(\roman*)}]
    \item $\ccap_\mathcal{H}(E) = 0$;
    \item $E$ is an unboundedness set for $\mathcal{H}$;
    \item $E$ is $\Mult(\mathcal{H})$-totally null.
  \end{enumerate}
\end{theorem}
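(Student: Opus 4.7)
The plan is to establish a cycle of four implications: $(i) \Rightarrow (ii)$, $(ii) \Rightarrow (i)$, $(iii) \Rightarrow (i)$, and $(i) \Rightarrow (iii)$. The first follows immediately from Theorem \ref{thm:cap_0_unboundedness}, since a complete Pick kernel has non-negative real part.

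For $(ii) \Rightarrow (i)$, I would argue by contradiction. Assume $\ccap_{\mathcal H}(E) > 0$, so there exists a probability measure $\mu$ supported on $E$ with $\mathcal{E}(\mu) < \infty$. Let $f \in \mathcal H$ witness that $E$ is an unboundedness set. Proposition \ref{prop:boundary_L^1} yields $f^* \in L^1(\mu)$, and in particular $f^*(\zeta)$ is finite $\mu$-almost everywhere; moreover, the proof of Proposition \ref{prop:boundary_L^1} provides a sequence $r_n \nearrow 1$ along which $f_{r_n}(\zeta) \to f^*(\zeta)$ for $\mu$-a.e.\ $\zeta$. But by hypothesis $|f(r_n \zeta)| \to \infty$ for every $\zeta \in E$, and $\mu(E) = 1$, a contradiction.

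For $(iii) \Rightarrow (i)$, I would prove the contrapositive by showing that any probability measure $\mu$ with $\mathcal{E}(\mu) < \infty$ is $\Mult(\mathcal H)$-Henkin. By Proposition \ref{prop:energy_fa}, $\rho_\mu$ extends to a bounded linear functional on $\mathcal H$. For any sequence of polynomials $(p_n)$ with $\sup_n \|p_n\|_{\Mult(\mathcal H)} < \infty$ and $p_n(z) \to 0$ pointwise on $\mathbb B_d$, the contractive inclusion $\Mult(\mathcal H) \hookrightarrow \mathcal H$ (which holds since $\|1\|_{\mathcal H} = 1$ by Lemma \ref{lem:pluriharmonic_RKHS_basic}(b)) gives a sequence bounded in $\mathcal H$; a bounded, pointwise convergent sequence in a reproducing kernel Hilbert space converges weakly, so $\rho_\mu(p_n) = \int p_n \, d\mu \to 0$, and $\mu$ is indeed Henkin. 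If $E$ were totally null but had $\ccap_{\mathcal H}(E) > 0$, then such a probability measure $\mu$ on $E$ would produce $|\mu|(E) = 1$, contradicting the totally null hypothesis.

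Finally, the implication $(i) \Rightarrow (iii)$ is where the complete Pick assumption is genuinely used. The plan is to invoke Theorem \ref{thm:henkin_intro} (to be established earlier in the same section): every $\Mult(\mathcal H)$-Henkin measure $\mu$ is absolutely continuous with respect to a probability measure $\nu$ of finite $\mathcal H$-energy. Since $\ccap_{\mathcal H}(E) = 0$, Lemma \ref{lem:capacity_zero} forces $\nu(E) = 0$, and absolute continuity then gives $|\mu|(E) = 0$, so $E$ is totally null. The main obstacle is thus the proof of Theorem \ref{thm:henkin_intro} itself; this is the ingredient that needs the complete Pick property in an essential way (through a factorization/Fatou-type analysis of Henkin functionals on $\Mult(\mathcal H)$), and it is unclear how to obtain $(i) \Rightarrow (iii)$ without going through such a structural description of Henkin measures.
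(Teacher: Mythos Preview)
Your arguments for $(i) \Rightarrow (ii)$, $(ii) \Rightarrow (i)$, and $(iii) \Rightarrow (i)$ are all correct. The latter two match the paper's reasoning for $(iii) \Rightarrow (i)$ closely; your extra implication $(ii) \Rightarrow (i)$ via Proposition~\ref{prop:boundary_L^1} is valid but not needed in the paper's cycle.

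The genuine gap is in $(i) \Rightarrow (iii)$. You propose to invoke Theorem~\ref{thm:henkin_intro} (the characterization of Henkin measures as those absolutely continuous with respect to a finite-energy probability measure), assuming it can be ``established earlier in the same section.'' But in the paper the logical dependence runs the other way: Theorem~\ref{thm:Henkin_char} (which is Theorem~\ref{thm:henkin_intro} of the introduction) is proved \emph{after} Theorem~\ref{thm:TN_capacity}, and its proof explicitly uses Theorem~\ref{thm:TN_capacity} to conclude that compact sets of capacity zero are totally null. So your proposed route is circular as stated, and you have not indicated an independent proof of the Henkin characterization that would break the cycle.

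The paper avoids this by proving the cycle $(i) \Rightarrow (ii) \Rightarrow (iii) \Rightarrow (i)$. The step you are missing is $(ii) \Rightarrow (iii)$: every unboundedness set is $\Mult(\mathcal H)$-totally null. This is precisely where the complete Pick property enters, and the paper simply cites \cite[Theorem 4.1]{CH20} for it. That result uses Pick factorization to pass from an unbounded $\mathcal H$-function on $E$ to multipliers witnessing that no Henkin measure can give $E$ positive mass. Once you have $(ii) \Rightarrow (iii)$ directly, the characterization of Henkin measures becomes a \emph{consequence} (via the Glicksberg--K\"onig--Seever decomposition), not a prerequisite.
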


\begin{proof}
  (i) $\Rightarrow$ (ii)
  This imiplication is a special case of Theorem \ref{thm:cap_0_unboundedness}.

  (ii) $\Rightarrow$ (iii) This implication was established in \cite[Theorem 4.1]{CH20}; it depends on the
  complete Pick property of $\mathcal{H}$.

  (iii) $\Rightarrow$ (i)
  The argument is essentially the same as that in \cite[Proposition 2.6]{DH23} or in \cite[Propostion 3.3]{CH20}.
  Indeed, if $\ccap_{\mathcal{H}}(E) > 0$, then $E$ supports a probability measure
  of finite energy. Proposition \ref{prop:energy_fa} implies that the integration functional $\rho_\mu$
  is bounded on $\mathcal{H}$. In particular,
  $\rho_\mu$ is weak-$*$ continuous on $\Mult(\mathcal{H})$,
  so  $\mu$ is $\Mult(\mathcal{H})$-Henkin, hence $E$ is not $\Mult(\mathcal{H})$-totally null.
\end{proof}

\begin{proof}[Proof of Theorem 1.3]
  By definition of the totally null property and regularity of Henkin measures,
  a Borel set $E \subset \partial \mathbb{B}_d$ is totally null if and only if each compact
  subset $F \subset E$ is totally null.
  On the other hand, capacitability of Borel sets (Theorem \ref{thm:Choquet} and the remark following
  it) shows that $E$ has capacity zero if and only if each compact subset does.
  Thus, Theorem \ref{thm:TN_capacity} shows that a Borel set is totally null
  if and only if it has capacity zero.
\end{proof}

\begin{remark}
  The implication (i) $\Rightarrow$ (iii) in Theorem \ref{thm:TN_capacity} may fail without the complete Pick property.
  For instance, if $\mathcal{H}$ is the weighted Bergman space on the unit disc
  with kernel $K_{\mathcal{D}_a}(z,w) = (1 - z \overline{w})^{-a}$ for $a > 2$, then $\Mult(\mathcal{H}) = H^\infty$,
  so the totally null subsets of $\partial \mathbb D$ are precisely the sets of Lebesgue measure zero.
  On the other hand, the energy of a positive measure is given by
  \begin{equation*}
    \mathcal{E}(\mu) \approx \sum_{n=0}^\infty |\widehat{\mu}(n)|^2 (n+1)^{a-1}.
  \end{equation*}
  By the Sobolev embedding theorem, every such measure has a continuous density.
  It follows that if a compact set $E \subset \partial \mathbb{D}$ has empty interior, then $\ccap_{\mathcal{H}}(E) = 0$.
  So if $E \subset \partial \mathbb{D}$ is any compact set with empty interior and positive Lebesgue measure,
  then $\ccap_{\mathcal H}(E) = 0$, but $E$ is not $\Mult(\mathcal H)$-totally null.
\end{remark}

\begin{remark}
  In \cite{CH20}, we also considered the notion of a \emph{weak unboundedness set $E$},
  meaning that there exist an auxiliary Hilbert space $\mathcal{E}$ and $f \in \mathcal{H} \otimes \mathcal{E}$
  with $\lim_{r \nearrow 1} \|f(r \zeta)\| = \infty$ for all $\zeta \in E$.
  It was shown in \cite[Theorem 4.1]{CH20} that for complete Pick spaces,
  the notions of unboudnedness  set, weak unboundedness set and totally null set agree.

  However, for the Bergman space, the unit circle is a weak unboundedness set that is not an unboundedness set.
  Indeed, the Privalov-Lusin radial uniqueness theorem (see \cite[\S IV.6]{Priwalow56}) shows that the circle is not an unboundedness set.
  On the other hand, the circle is a weak unboundedness set; see \cite{AD15}.
\end{remark}

The equivalence of the capacity zero and the totally null condition in Theorem \ref{thm:TN_capacity}
can be refined to a relationship between the classes of finite energy and Henkin measures.
We begin with an example.

\begin{example}
  \label{exa:Henkin_H2}
  We consider the Hardy space $H^2$ on the disc.
  It is a consequence of the classical F.\ and M.\ Riesz theorem that
  the Henkin measures for $H^2$ are precisely those measures on $\partial \mathbb{D}$
  that are absolutly continuous with respect to Lebesgue measure $m$;
  see for instance \cite[Remark 9.2.2 (c)]{Rudin08}.

  On the other hand, we claim that a measure $\mu \in M(\partial \mathbb{D})$ has finite energy for $H^2$ if and only if
  $d \mu = \overline{f} dm + g d m$ for some $f \in H^1_0$ and $g \in H^2$.
  Indeed, sufficiency follows from the fact that integration against ${f} dm$ annihilates $H^2$ and Proposition \ref{prop:energy_fa}. As for necessity, suppose that $\mu$ has finite energy
  and let $g = f_\mu$ be the holomorphic potential of $\mu$. Then Proposition \ref{prop:energy_fa}
  shows that $g \in H^2$ and that
  \begin{equation*}
    \int_{\partial \mathbb{D}} p \, d \overline{\mu} = \int_{\partial \mathbb{D}} p \overline{g} \, d m
  \end{equation*}
  for all polynomials $p$. The F.\ and M.\ Riesz theorem
  shows that $d \overline{\mu} - \overline{g} dm = f d m$ for some $f \in H^1_0$,
  so $\mu$ has the stated form.
\end{example}

Proposition \ref{prop:energy_fa} shows that every complex measure of finite energy induces a continuous
functional on $\mathcal{H}$. In particular, every finite energy measure is Henkin.
As Example \ref{exa:Henkin_H2} shows, the converse need not hold.
Example \ref{exa:Henkin_H2} shows another important distinction
between the two classes of measures:
While the set of Henkin measures forms a band (see \cite[Lemma 3.3]{BHM18}), the set of finite energy measures
is in general not even closed under complex conjugation.

\begin{remark}
  It is a remarkable property of the Dirichlet space $\mathcal{D}$ that unlike in the case of $H^2$, the complex
  conjugate of a finite energy measure also has finite energy;
  see \cite{Koosis83} or \cite[p.\ 186, Lemma 8]{Wojtaszczyk91}.
\end{remark}

The next result in particular establishes Theorem \ref{thm:henkin_intro}.

\begin{theorem}
  \label{thm:Henkin_char}
  Let $\mathcal{H}$ be a regular unitarily invariant complete Pick space of holomorphic functions on $\mathbb{B}_d$.
  A measure $\mu \in M(\partial \mathbb{B}_d)$ is $\Mult(\mathcal{H})$-Henkin
  if and only if it is absolutely continuous with respect to a probability measure $\mu$ that has finite $\mathcal{H}$-energy.
\end{theorem}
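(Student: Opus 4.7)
The plan is to prove the two implications separately. The ``if'' direction follows quickly from Proposition \ref{prop:energy_fa} together with the band property of Henkin measures; the ``only if'' direction requires Theorem \ref{thm:cap_TN_intro} and a maximality argument whose technical core is a step where the complete Pick property is essential.

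For the ``if'' direction, suppose $\nu \in P(\partial \mathbb{B}_d)$ has finite $\mathcal{H}$-energy. By Proposition \ref{prop:energy_fa}, $\rho_\nu$ is a bounded linear functional on $\mathcal{H}$. The inequality $\|p\|_{\mathcal{H}} \le \|1\|_{\mathcal{H}}\|p\|_{\Mult(\mathcal{H})}$ for polynomials $p$, together with the fact that a bounded sequence in $\Mult(\mathcal{H})$ converging pointwise to zero on $\mathbb{B}_d$ converges weakly to zero in $\mathcal{H}$, shows that $\nu$ is $\Mult(\mathcal{H})$-Henkin. Since Henkin measures form a band (\cite[Lemma 3.3]{BHM18}), every $\mu \ll \nu$ is likewise Henkin.

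For the ``only if'' direction, assume $\mu$ is Henkin. The band property allows me to replace $\mu$ by $|\mu|$, so without loss of generality $\mu \ge 0$. By Theorem \ref{thm:cap_TN_intro}, $\mu$ then vanishes on every Borel set of outer capacity zero. Let $\mathcal{F}$ be the set of probability measures of finite $\mathcal{H}$-energy and define
\[
  \mathcal{A} := \{\lambda \in M_+(\partial \mathbb{B}_d) : \lambda \le \mu,\ \lambda \ll \nu \text{ for some } \nu \in \mathcal{F}\}.
\]
Given $\lambda_n \in \mathcal{A}$ with $\lambda_n \ll \nu_n \in \mathcal{F}$, choose weights $c_n > 0$ with $\sum_n c_n = 1$ and $\sum_n c_n \|f_{\nu_n}\|_{\mathcal{H}} < \infty$; then $\sum_n c_n f_{\nu_n}$ converges in $\mathcal{H}$ and is the potential of $\nu := \sum_n c_n \nu_n$, so Proposition \ref{prop:energy_fa} gives $\nu \in \mathcal{F}$ and $\sup_n \lambda_n \ll \nu$. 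Hence $\mathcal{A}$ is closed under bounded countable suprema, and a standard exhaustion argument maximizing the total mass produces a maximal element $\lambda^* \in \mathcal{A}$ in the order of positive measures.

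It remains to prove $\lambda^* = \mu$. If not, then $\sigma := \mu - \lambda^*$ is a nonzero positive measure with $\sigma \le \mu$, hence Henkin by the band property and vanishing on capacity zero Borel sets. The main obstacle is to produce $\nu_0 \in \mathcal{F}$ for which the Lebesgue decomposition of $\sigma$ with respect to $\nu_0$ has a nonzero absolutely continuous part $\sigma'$; any such $\sigma'$ then lies in $\mathcal{A}$, and $\lambda^* + \sigma' \in \mathcal{A}$ would strictly dominate $\lambda^*$, contradicting maximality. This is where the complete Pick hypothesis is essential: one uses the description of $\rho_\sigma$ as a weak-$*$ continuous functional on $\Mult(\mathcal{H})$, together with Clou\^atre--Davidson-style decompositions available for complete Pick spaces, to rule out the possibility that a nonzero positive Henkin measure is singular with respect to every finite-energy probability measure.
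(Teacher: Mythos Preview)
Your ``if'' direction is fine and matches the paper's argument.

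The ``only if'' direction has a genuine gap. Your maximality construction is sound up to the point where you obtain $\sigma = \mu - \lambda^*$, which by maximality is a positive Henkin measure singular with respect to every $\nu \in \mathcal F$. But the final paragraph is not a proof: you merely restate the difficulty (``rule out the possibility that a nonzero positive Henkin measure is singular with respect to every finite-energy probability measure'') and gesture at ``Clou\^atre--Davidson-style decompositions'' without saying which result you mean or how it applies. The obstacle is real: singularity of $\sigma$ with respect to each individual $\nu \in \mathcal F$ gives a different null set $E_\nu$, and there is no obvious way to produce a \emph{single} Borel set carrying $\sigma$ that is null for every finite-energy measure (equivalently, of inner capacity zero). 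Without that, you cannot invoke Theorem~\ref{thm:cap_TN_intro} to conclude $\sigma = 0$.

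The paper closes exactly this gap by applying the Glicksberg--K\"onig--Seever decomposition theorem to the set
\[
  \Lambda = \{\rho \in M_+(\partial \mathbb B_d): \mathcal E(\rho) \le 1\},
\]
which is convex, bounded in total variation, and weak-$*$ compact by lower semicontinuity of the energy. GKS then yields $\mu = \mu_a + \mu_s$ with $\mu_a \ll \rho$ for some $\rho \in \Lambda$ and $\mu_s$ concentrated on a single Borel set $E$ that is null for \emph{every} $\rho \in \Lambda$. By definition this forces $\ccap_{\mathcal H}(K) = 0$ for every compact $K \subset E$, whence $E$ is totally null by Theorem~\ref{thm:TN_capacity}, and $\mu_s = 0$ follows since $\mu_s = \mu - \mu_a$ is Henkin. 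Your maximality argument is morally a hand-rolled version of GKS, but the hard measure-theoretic step---passing from ``singular with respect to each'' to ``concentrated on a common null set''---is precisely what GKS supplies and what your write-up omits.

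One further remark: the complete Pick property does not enter where you claim. It is used only in the implication ``capacity zero $\Rightarrow$ totally null'' of Theorem~\ref{thm:TN_capacity}, which you already invoked earlier; the decomposition step itself is purely measure-theoretic.
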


\begin{proof}
  As remarked in the discussion following Example \ref{exa:Henkin_H2}, every measure with finite energy is Henkin.
  Since the Henkin measures form a band by \cite[Lemma 3.3]{BHM18}, so is every measure that is absolutely continuous with respect to a finite energy measure.

  Conversely, suppose that $\mu$ is Henkin. Let
  \begin{equation*}
    \Lambda = \{ \rho \in M_+(\partial \mathbb{B}_d): \mathcal{E}(\mu) \le 1 \}.
  \end{equation*}
  Then $\Lambda$ is convex, for instance by Proposition \ref{prop:energy_fa} and the triangle inequality.
  Since $\mu(\partial \mathbb{B}_d) = \int_{\partial \mathbb{B}_d} 1 \,d \mu \le \mathcal{E}(\mu)$,
  we see that every measure in $\Lambda$ has total mass at most $1$.
  By lower semicontinuity of the energy functional, $\Lambda$ is weak* closed and hence weak* compact.
  (The weak* topology is metrizable on bounded subsets of $M(\partial \mathbb{B}_d)$,
  so it suffices to check sequential closedness).

  In this setting, the Glicksberg--K\"onig--Seever decomposition theorem
  yields a decomposition $\mu = \mu_a + \mu_s$, where $\mu_a$ is absolutely
  continuous with respect to some $\rho \in \Lambda$ and $\mu_s$ is concentrated
  on a Borel set $E$ that is null for every measure in $\Lambda$;
  see for instance \cite[Corollary II.7.5]{Gamelin69} and also \cite[Theorem 9.4.4]{Rudin08}
  for a slightly weaker statement.
  For every compact set $K \subset E$, it follows from the definition
  that $\ccap(K) = 0$, hence $K$ is totally null by Theorem \ref{thm:TN_capacity}.
  Regularity of the measures involved shows that $E$ is totally null.

  Now, by the already established implication of the theorem, we find that $\mu_a$ is Henkin,
  hence so is $\mu_s = \mu - \mu_a$. Since $\mu_s$ is concentrated on the totally
  null set $E$, it follows that $\mu_s = 0$. Therefore, $\mu = \mu_a$ is absolutely
  continuous with respect to some $\rho \in \Lambda$. Rescaling $\rho$, we obtained
  the desired probability measure.
\end{proof}

A measure $\mu \in M(\partial \mathbb B_d)$ is said to be \emph{$\Mult(\mathcal H)$-totally
singular} if it is singular with respect to every $\Mult(\mathcal H)$-Henkin measure.
Such measures appear in the description of the dual space of certain
algebras of multipliers; see \cite{DH23}.
Our results also give a characterization of totally singular measures.

\begin{theorem}
\label{thm:TS}
  Let $\mathcal{H}$ be a regular unitarily invariant complete Pick space of holomorphic functions on $\mathbb{B}_d$.
  The following are equivalent for a measure $\nu \in M(\partial \mathbb{B}_d)$:
  \begin{enumerate}[label=\normalfont{(\roman*)}]
      \item $\nu$ is $\Mult(\mathcal H)$-totally singular;
      \item $\nu$ is singular with respect to every probability measure
      of finite $\mathcal H$-energy;
      \item $\nu$ is concentrated on an $F_\sigma$ set of inner $\mathcal H$-capacity zero.
  \end{enumerate}
\end{theorem}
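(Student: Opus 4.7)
The plan is to prove the equivalence via the cycle $(i) \Rightarrow (ii) \Rightarrow (iii) \Rightarrow (i)$. The implication $(i) \Rightarrow (ii)$ is immediate, since every probability measure of finite $\mathcal H$-energy is itself $\Mult(\mathcal H)$-Henkin: by Proposition \ref{prop:energy_fa} it induces a bounded linear functional on $\mathcal H$, hence a weak-$*$ continuous functional on $\Mult(\mathcal H)$. For $(iii) \Rightarrow (i)$, I would appeal to Theorem \ref{thm:TN_capacity}: if $\nu$ concentrates on an $F_\sigma$ set $E = \bigcup_n K_n$ with each $\ccap_{\mathcal H}(K_n) = 0$, then each $K_n$ is $\Mult(\mathcal H)$-totally null, so $|\mu|(E) = 0$ for every Henkin measure $\mu$; together with $|\nu|(E^c) = 0$ this forces $\nu \perp \mu$.

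The substantive step is $(ii) \Rightarrow (iii)$, and my plan is to reuse the Glicksberg--K\"onig--Seever argument that already appears in the proof of Theorem \ref{thm:Henkin_char}. With $\Lambda = \{\rho \in M_+(\partial \mathbb B_d) : \mathcal E(\rho) \le 1\}$ convex and weak-$*$ compact (as verified there), the Glicksberg--K\"onig--Seever decomposition theorem yields a splitting $\nu = \nu_a + \nu_s$ in which $\nu_a \ll \rho_0$ for some $\rho_0 \in \Lambda$ and $\nu_s$ concentrates on a Borel set $F$ that is null for every $\rho \in \Lambda$. Hypothesis (ii) forces $\nu \perp \rho_0$; combined with $\nu_a \ll \rho_0$ this gives $\nu_a = 0$, so $\nu$ concentrates on $F$. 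A rescaling argument shows that every compact subset $K \subset F$ must satisfy $\ccap_{\mathcal H}(K) = 0$: any probability measure of finite energy supported on $K$ would, after dividing by the square root of its energy, yield an element of $\Lambda$ charging $F$, contradicting the defining property of $F$.

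To meet the $F_\sigma$ requirement in (iii), I would invoke inner regularity of $|\nu|$ to select, for each $n \ge 1$, a compact set $K_n \subset F$ with $|\nu|(F \setminus K_n) < 1/n$, and set $F' = \bigcup_n K_n$. Then $F'$ is $F_\sigma$, $\nu$ concentrates on $F'$, each $K_n$ has capacity zero as a compact subset of $F$, and consequently $F'$ has inner capacity zero (by Lemma \ref{lem:capacity_zero}, any probability measure of finite energy supported on a compact subset of $F'$ would have to vanish on each $K_n$ and therefore on $F'$, which is impossible). The main obstacle is the appeal to the Glicksberg--K\"onig--Seever decomposition theorem, a nontrivial abstract result; however, the verification that $\Lambda$ satisfies its hypotheses has already been carried out in the proof of Theorem \ref{thm:Henkin_char}, so no new analytical ingredients beyond that theorem and the capacity/totally-null equivalence of Theorem \ref{thm:TN_capacity} are required.
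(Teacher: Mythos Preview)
Your argument is correct, but it takes a somewhat different route from the paper's. The paper argues the equivalence as follows: the equivalence of (i) and (ii) is immediate from Theorem~\ref{thm:Henkin_char} (the Henkin characterization), and for (i)~$\Rightarrow$~(iii) the paper simply cites an external result, \cite[Proposition~4.4]{DH23}, which already asserts that any $\Mult(\mathcal H)$-totally singular measure is concentrated on an $F_\sigma$ set that is $\Mult(\mathcal H)$-totally null; Theorem~\ref{thm:TN_capacity} then converts ``totally null'' into ``capacity zero''. Your (iii)~$\Rightarrow$~(i) matches the paper's.

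The genuine difference is in reaching (iii). Rather than invoking the external \cite{DH23} result, you reconstruct the needed $F_\sigma$ carrier directly by applying the Glicksberg--K\"onig--Seever decomposition to $\nu$ itself (with the same convex weak-$*$ compact set $\Lambda$ already used in the proof of Theorem~\ref{thm:Henkin_char}), and then use inner regularity of $|\nu|$ to pass from a Borel null carrier to an $F_\sigma$ one. This makes your proof more self-contained within the paper's framework, at the cost of essentially re-running the GKS machinery rather than packaging it via Theorem~\ref{thm:Henkin_char} and the cited external proposition. The paper's route is shorter but leans on outside literature; yours trades brevity for internal completeness.
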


\begin{proof}
    The equivalence of (i) and (ii) is immediate from the definition and Theorem \ref{thm:Henkin_char}.

    (i) $\Rightarrow$ (iii) Let $\nu$ be totally singular.
    According to \cite[Proposition 4.4]{DH23}, $\nu$ is concentrated on an $F_\sigma$
    set $E$ that is $\Mult(\mathcal H)$-totally null. By the easy implication
    of Theorem \ref{thm:TN_capacity}, every compact subset of $E$ has $\mathcal H$-capacity
    zero, so $\ccap_{\mathcal H}(E) = 0$.

    (iii) $\Rightarrow$ (i) Let $\nu$ be concentrated on an $F_\sigma$ set
    $E$ satisfying $\ccap_{\mathcal H}(E) = 0$. It follows from Theorem \ref{thm:TN_capacity}
    that $E$ is $\Mult(\mathcal H)$-totally null. By definition,
    $\nu$ is therefore singular with respect to every $\Mult(\mathcal H)$-Henkin measure.
\end{proof}

\appendix
\section{Abstract capacities}
\label{sec:app_abstract_capacities}

In this section we shall develop some properties of abstract capacities that essentially depend only on the lower semicontinuity of the energy functional. Most of the proofs are quite standard but usually are not carried out in this generality.
We mostly follow \cite[Chapter 2]{EKM+14}.
Throughout, let $X$ be a compact metric space and $ M_+(X)$ the set of positive and finite Borel measures on $X$. Let also $C \subset M_+(X)$ be a convex cone.

\begin{definition} \label{defn:energy_functional}
Let $C \subset M_+(X)$ be a convex cone such that for all $\mu \in C$ and all compact $F \subset X$, the measure $\chi_F d \mu$ belongs to $C$.
A function $I:C \times C \to [0,\infty)$ is called an energy functional if 
    \begin{itemize}
        \item[(A1)]  $I(\mu,\nu) = I(\nu,\mu)$ if $\mu,\nu \in C$.
        
        \item[(A2)] $I(t \mu + \rho,\nu) = t I(\mu,\nu) + I(\rho,\nu) $ if $\mu,\nu,\rho \in C $ and $ t \in [0,\infty)$.

        \item[(A3)] 

        If $\mu_n \in C$ and $\mu_n$ converges weak* to $\mu$,  then $\liminf_{n} I(\mu_n,\mu_n) \geq I(\mu,\mu)$ if $\mu \in C$ and $\lim_n I(\mu_n,\mu_n) = \infty$ if $\mu \not \in C$.
    \end{itemize}

    \begin{remark}
       We will write $I(\mu)$ instead of $I(\mu,\mu)$. We shall refer to measures $\mu \in C $  as measures of finite energy. In view of $(A3)$ it makes sense to define $I(\mu,\mu) = \infty$ for all $\mu \in M_+(X)\setminus C $. Then $(A3)$ can be restated simply as saying that $I(\cdot,\cdot)$ is sequentially weak* lower semicontinuous on $M_+(X).$
    \end{remark}

Given an energy functional we can define the capacity of compact sets as
\[ \ccap(F)=\sup\bigg\{ \frac{1}{I(\mu)} : \mu \in P(F)\bigg\}. \]
In particular, a compact set has positive capacity if and only if it supports a probability
measure of finite energy.

Given a compact set $F$, a probability measure $\mu\in P(F)$ is called an equilibrium measure if $I(\mu) = \ccap(F)^{-1}$. The existence of equilibrium measures for all compact sets of positive capacity is an immediate consequence of (A3) and weak* sequential
compactness of $P(F)$.

We can extend the definition of capacity to all subsets of $X$ by approximating from within by compact sets. That is, if $E\subset X$, we define
\[  \ccap(E)=\sup\{  \ccap(F): F\subset E, F \,\, \text{compact}\, \}. \]
We shall refer to this capacity as inner capacity. 

Similarly, an outer capacity can be defined by approximating from outside by open sets 
\[ \ccap^*(E) = \inf\{ \ccap(G): G \supset E, G \,\, \text{open} \, \}. \]

A set will be called capacitable if its inner and outer capacities agree. By definition open sets are capacitable. 
\end{definition}

\begin{proposition}\label{prop:abstract_capacity}
    Let $I$ be an energy functional.  The following properties hold. 
    \begin{itemize}
        \item [(P1)] If $E_1 \subset E_2$ then $\ccap^*(E_1) \leq \ccap^*(E_2)$.
        \item[(P2)] If $E_1, E_2, \dots \subset X$, then $\ccap^* \Big( \bigcup_{n=1}^\infty E_n \Big) \leq \sum_{n=1}^\infty \ccap^*(E_n)$.
        \item[(P3)] If $F_1 \supset F_2, \dots$ is a decreasing sequence of compact sets \[ \ccap \Big( \bigcap_{n=1}^\infty F_n \Big) = \lim_n \ccap(F_n).\]
        \item[(P4)] Open and closed sets are capacitable. 
        \item[(P5)] If $E_1,E_2 \subset X$, then $\ccap(E_1 \cup E_2) \le \ccap(E_1) + \ccap^*(E_2)$.
        \item[(P6)] If $F \subset X$ is compact with positive capacity and $\mu$ is an equilibrium measure for $F$, then for every $\nu \in P(F)$ with finite energy we have
        \[ I(\mu) \leq I(\mu,\nu). \]
    \end{itemize}

\begin{proof}
    $(P1)$ is an immediate consequence of the definitions, while $(P2)$ is proved exactly as the analogous statement for the outer capacity in \cite[Theorem 2.1.9]{EKM+14}.
    Here, we use that $C$ is closed under restrictions to compact subsets of $F$.

      In order to prove $(P3)$, let $(F_n)$ be a decreasing sequence of compact sets.
      We may assume that $\ccap(F_n) > 0$ for all $n$. Let $\mu_n$ be an equilibrium measure for $F_n$. We pick a weak* cluster point $\mu$ of $(\mu_n)$. Since $\supp(\mu)  \subset F_n $ and $F_n$ are decreasing, $\supp(\mu) \subset \bigcap_{n=1}^\infty F_n $. Hence by the lower semicontinuity of the energy functional we have that 

\[ \ccap\Big( \bigcap_{n=1}^\infty F_n \Big)^{-1} \leq I(\mu) \leq \liminf_n I(\mu_n) = \lim_n \ccap(F_n)^{-1}. \]
The other inequality is clear by inclusions. 

Open sets are capacitable by definition. The capacitability of closed sets is a consequence of $(P3)$. Indeed, let $F$ be a compact set. There exists  sequence $(G_n)$ of open sets such that $F \subset \overline{G_{n+1}} \subset G_n$ for all $n$ and $\bigcap_{n} \overline{G_n} = F$. Then by definition of outer capacity and $(P3)$ we have that 
\[ \ccap(F) \leq \ccap^*(F) \leq \lim_n\ccap(G_n) \leq \lim_n\ccap(\overline{G_n}) = \ccap\Big( \bigcap_{n=1}^\infty \overline{G_n}\Big)=\ccap(F).  \]

To show $(P5)$, let $F \subset E_1 \cup E_2$ be compact and let $G \supset E_2$ be open.
Since $F = (F \setminus G) \cup (F \cap G)$ and $F \setminus G$ is a compact subset of $E_1$, (P2) and (P4) imply
\begin{align*}
  \ccap(F) \le \ccap^*(F \setminus G) + \ccap^*(F \cap G)
  &= \ccap(F \setminus G) + \ccap^*(F \cap G) \\
  \le \ccap(E_1) + \ccap^*(G).
\end{align*}
Now $(P5)$ follows by taking the infimum over all open $G \supset E_2$ and the supremum over all compact $F \subset E_1 \cup E_2$.

Finally $(P6)$ follows by a variational argument.  For $t\in (0,1)$ consider the measure  $(1-t) \mu + t \nu \in P(F)$. By definition of equilibrium measure, 
\[ I(\mu) \le I( (1-t) \mu + t \nu) = I(\mu) +2t(I(\mu,\nu) - I(\mu)) + t^2(I(\mu)+I(\nu)-2I(\mu,\nu)), \]
hence the coefficient of the first order term must be non-negative.
\end{proof}
\end{proposition}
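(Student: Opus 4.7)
My plan is to dispatch the six properties in roughly the order they are stated, using standard capacity-theoretic arguments that rely only on (A1)-(A3) together with the non-negativity of the mixed energy $I(\mu_1,\mu_2)$ for positive measures $\mu_1,\mu_2$, which is implicit in the applications where $\Re K \ge 0$.

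Property (P1) is immediate from the definition of outer capacity, since any open set containing $E_2$ also contains $E_1$, so the infimum defining $\ccap^*(E_1)$ is taken over a larger family. For (P2), I would first establish finite subadditivity on compact sets via a variational argument. Given compact $K \subset K_1 \cup K_2$ and any $\mu \in P(K)$, decompose $\mu = \mu_1 + \mu_2$ with $\mu_1 = \mu|_{K_1}$ and $\mu_2 = \mu|_{K\setminus K_1}$; bilinearity (A2) together with $I(\mu_1,\mu_2) \ge 0$ gives $I(\mu) \ge I(\mu_1) + I(\mu_2) \ge a^2/\ccap(K_1) + (1-a)^2/\ccap(K_2)$ with $a = \mu_1(K_1)$. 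Minimizing the right-hand side in $a$ yields $\ccap(K) \le \ccap(K_1) + \ccap(K_2)$. Countable subadditivity on open sets then follows by compact exhaustion inside $\bigcup G_n$, and finally the familiar $\varepsilon/2^n$ trick, approximating each $E_n$ from outside by an open $G_n$ with $\ccap(G_n) < \ccap^*(E_n) + \varepsilon/2^n$, gives (P2) in full generality.

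For (P3), the inequality $\ccap(\bigcap F_n) \le \lim \ccap(F_n)$ is clear from monotonicity, so I would focus on the reverse. Assuming the $F_n$ have positive capacity, pick an equilibrium measure $\mu_n \in P(F_n)$ (existence from (A3) and weak-$*$ compactness of $P(F_1)$), extract a weak-$*$ convergent subsequence $\mu_n \rightharpoonup \mu$, and note that $\supp \mu \subset \bigcap_n F_n$ because the $F_n$ are nested and closed. Lower semicontinuity (A3) then gives $I(\mu) \le \liminf_n I(\mu_n) = \lim_n \ccap(F_n)^{-1}$, whence $\ccap(\bigcap F_n) \ge \lim \ccap(F_n)$. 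Property (P4) drops out next: open sets are capacitable by definition, and for compact $F$ one constructs open $G_n$ with $F \subset \overline{G_{n+1}} \subset G_n$ and $\bigcap \overline{G_n} = F$, so that (P3) applied to $(\overline{G_n})$ sandwiches $\ccap^*(F)$ between $\ccap(F)$ and $\lim \ccap(\overline{G_n}) = \ccap(F)$.

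For (P5), given a compact $F \subset E_1 \cup E_2$ and an open $G \supset E_2$, decompose $F = (F\setminus G) \cup (F \cap G)$: the first piece is compact and contained in $E_1$, the second lies in the open $G$, so (P2) combined with the capacitability of open and closed sets yields $\ccap(F) \le \ccap(E_1) + \ccap(G)$, and taking the infimum over $G$ and supremum over $F$ finishes the argument. Finally, (P6) is a short variational computation: since $(1-t)\mu + t\nu \in P(F)$ for $t \in [0,1]$, the map $t \mapsto I((1-t)\mu + t\nu) = I(\mu) + 2t\bigl(I(\mu,\nu) - I(\mu)\bigr) + t^2 I(\nu - \mu)$ is minimized at $t = 0$, forcing the coefficient of $t$ to be non-negative. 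The main obstacle I anticipate is (P2), specifically justifying the non-negativity of mixed energies of mutually singular measures needed for the variational lower bound; in the setting of this paper this is automatic from $\Re K \ge 0$, but it should either be added as a tacit hypothesis or the statement weakened to allow a universal constant.
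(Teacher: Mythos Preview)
Your argument follows the paper's own proof essentially line by line: (P1) is immediate, (P2) is the standard restriction-and-minimize argument (which is precisely what the paper invokes by citing \cite[Theorem~2.1.9]{EKM+14}), (P3) uses equilibrium measures plus weak-$*$ compactness and lower semicontinuity, (P4) is deduced from (P3) via nested $\overline{G_n}$, (P5) uses the decomposition $F=(F\setminus G)\cup(F\cap G)$, and (P6) is the same first-variation computation.

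Two minor remarks. First, your closing worry about non-negativity of the mixed energy is unnecessary: in the abstract Definition~\ref{defn:energy_functional} the energy functional is declared to take values in $[0,\infty)$, so $I(\mu_1,\mu_2)\ge 0$ is a hypothesis, not something to be justified. Second, in (P6) you write the quadratic term as $t^2 I(\nu-\mu)$, but $\nu-\mu$ is signed and not in the cone $C$, so $I(\nu-\mu)$ is not defined in this framework; write it as $t^2\bigl(I(\mu)+I(\nu)-2I(\mu,\nu)\bigr)$ as the paper does. Neither point affects the substance of your proof.
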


\bibliographystyle{amsplain}
\bibliography{DA_potential_theory_refs}

\end{document}